\newtheorem{thm}{Theorem}[section]
\newtheorem{cor}[thm]{Corollary}
\newtheorem{lem}[thm]{Lemma}
\newtheorem{prop}[thm]{Proposition}
\newtheorem{alg}[thm]{Algorithm}
\title{Characterizing the fullerene graphs with the minimum forcing number $3$\footnote{This work is supported by NSFC (grant no. 11871256) and the Fundamental Research Funds for the Central Universities (grant no. lzujbky-2017-28).}}
\author{Lingjuan Shi$^{a,b}$, Heping Zhang$^a$\thanks{Corresponding author.}, Ruizhi Lin$^a$ }
\date{\small $^a$School of Mathematics and Statistics, Lanzhou University, Lanzhou, Gansu 730000, \\P. R. China\\
$^{b}$School of Software and Microelectronics, Northwestern Polytechnical University, \\Xi'an, Shaanxi 710072, P. R. China\\
E-mails: shilj15@lzu.edu.cn, zhanghp@lzu.edu.cn, linrzh08@lzu.edu.cn}
\begin{document}
\maketitle
\begin{abstract}
The minimum forcing number of a graph $G$ is the smallest number of edges simultaneously contained in a unique
perfect matching of $G$. Zhang, Ye and Shiu \cite{HDW} showed that the minimum forcing number of any fullerene graph was bounded below by $3$. However, we find that there exists exactly one excepted fullerene $F_{24}$ with the minimum forcing number $2$.
In this paper, we characterize all fullerenes with the minimum forcing number $3$ by a construction approach. This  also solves an open problem proposed by Zhang et al.  We also find that except for $F_{24}$, all fullerenes with anti-forcing number $4$ have the minimum forcing number $3$. In particular, the nanotube fullerenes of type $(4, 2)$ are such fullerenes.\\

\textbf{Keywords:} Fullerene graph; Perfect matching; Minimum forcing number; Generalized patch
\end{abstract}
\section{Introduction}
A \emph{fullerene} graph (simply fullerene) is a cubic $3$-connected plane graph with only pentagonal and hexagonal
faces. By Euler's formula, a fullerene graph has exactly twelve pentagonal faces.
Such graphs are suitable models for carbon fullerene molecules: carbon atoms are
represented by vertices, whereas edges represent chemical bonds between two atoms (see \cite{fowler1995, LZ18}).
Gr\"{u}nbaum and Motzkin \cite{Gru} showed that a fullerene graph with $n$ vertices exists for $n=20$ and for all even $n\geq24$.

A \emph{perfect matching} $M$ of a graph $G$ is an  edge set such that each vertex of $G$ is   incident with exactly one edge in $M$.
Let $G$ be a graph with a perfect matching $M$. A set $S\subseteq M$ is called a \emph{forcing set} of
$M$ if $S$ is not contained in any other perfect
matchings of $G$.  The \emph{forcing
number}  of $M$,  first proposed in organic chemistry by Randi\'c and Klein \cite{DM,RK,MR} under name  {\em innate degree of freedom} in correlation with resonance structure, is defined as the minimum size of all forcing
sets of $M$ by Harary et al. \cite{FDT}, denoted by $f(G, M)$. The \emph{minimum forcing number} of $G$, denoted by $f(G)$, is the minimum
value of the forcing numbers of all perfect matchings of $G$.
\begin{figure}[htbp!]
\centering
\includegraphics[height=2.5cm]{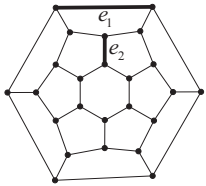}
\caption{\label{F_24}{\small $\{e_1, e_2\}$ is a minimum forcing set of fullerene $F_{24}$.}}
\end{figure}

D. Vuki\v{c}evi\'{c} and N.
Trinajsti\'{c} \cite{afcben,afcata} recently introduced the anti-forcing number of a graph $G$ as the smallest number of edges whose removal results in a subgraph with a single perfect matching,  denoted  by $af(G)$.
For a fullerene graph $F$, Yang et al. \cite{YangQ} showed that $af(F)\geq4$, and further
gave a procedure to construct all fullerenes with the anti-forcing number $4$.
For a $(3, 6)$-fullerene graph $H$, two of the present authors   proved \cite{SZfullerene}that $af(H)\geq2$ and equality holds if and only if $H$ either has connectivity $2$ or is isomorphic to $K_4$, and determined all the $(3, 6)$-fullerenes with the anti-forcing number $3$. Jiang and Zhang \cite{BN-fullerene} characterized all the $(4, 6)$-fullerenes with the minimum forcing number $2$.

In Ref. \cite{HDW}, Zhang, Ye and Shiu gave a main result that the  forcing number of of every perfect matching of any fullerene graph was bounded below by $3$ (See Theorem 2.7).
We find that in the last paragraph of its proof, they neglected the trivial case of cyclic 5-edge cut $S$, that is, the claim ``Clearly, $S$ is non-trivial'' is not right. In fact,  when $S$ is trivial, we obtain a unique fullerene $F_{24}$ with the minimum forcing number $2$ (see Fig. \ref{F_24}).  So the main result (Theorem 1.1 or 2.7) in  \cite{HDW} can be corrected as

\begin{thm}Let $F$ be a fullerene graph. Then $f(F) \geq  3$ except for $F_{24}$.
\end{thm}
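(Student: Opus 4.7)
The theorem extends Theorem~2.7 of \cite{HDW}; the plan is to reuse their argument wherever it remains valid and treat the overlooked case separately. Suppose for contradiction that $f(F)\le 2$, and fix a perfect matching $M$ admitting a forcing set $\{e_1,e_2\}\subseteq M$; by the definition of a forcing set, every $M$-alternating cycle of $F$ contains $e_1$ or $e_2$. The argument in \cite{HDW} produces a cyclic 5-edge cut $T$ of $F$, and their derivation of a contradiction goes through whenever $T$ is non-trivial; I reuse this part verbatim. The missed case is when $T$ is trivial, i.e., one side of $T$ is a single pentagonal face $P=v_1v_2v_3v_4v_5$ and $T=\{v_iu_i:1\le i\le 5\}$ with the $u_i$ pairwise distinct (a consequence of 3-connectivity together with the girth-$5$ property of fullerenes).

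For each $i$, let $F_i$ denote the face other than $P$ incident to $v_iv_{i+1}$; each $F_i$ is a pentagon or hexagon whose boundary contains the subpath $u_iv_iv_{i+1}u_{i+1}$. Writing $k$ for the number of hexagons among $F_1,\ldots,F_5$, the ``cap'' $P\cup F_1\cup\cdots\cup F_5$ has $k+15$ vertices, and its outer boundary is a cycle of length $k+10$ containing the five $u_i$ and $k+5$ additional vertices lying on the outer edges of the $F_i$. Because $|V(P)|$ is odd, a parity count on the edges of $M$ incident to $V(P)$ gives $|M\cap T|\in\{1,3,5\}$; each value pins down the restriction of $M$ to $P$ up to rotation and produces a family of short $M$-alternating cycles lying within the cap, typically of lengths 6, 8 or 10, obtained by walking around a single $F_i$, around a union $F_i\cup F_{i+1}$, or around the whole cap and back through the interior of $P$.

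Invoking the forcing condition that every such alternating cycle meets $\{e_1,e_2\}$ severely restricts the possible cap configurations. A case analysis on the pairs $(k,|M\cap T|)$ will show that each surviving configuration forces the outer boundary cycle to bound another trivial cyclic 5- or 6-edge cut on the outside, whose enclosed region is likewise pinned down by the forcing constraint; matching both sides and applying Euler's formula then yields $|V(F)|=24$, and the only fullerene realizing the structure is $F_{24}$. Combining this with the reused non-trivial case argument of \cite{HDW} gives $f(F)\ge 3$ whenever $F\ne F_{24}$.

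The main obstacle is the case analysis in the trivial case: each pair $(k,|M\cap T|)$ produces several sub-configurations depending on which $v_i\in V(P)$ is matched outward and on the placement of the hexagons among $F_1,\ldots,F_5$. The saving feature is that any sub-configuration exhibiting one $M$-alternating cycle disjoint from $\{e_1,e_2\}$ can be discarded at once, so most branches die quickly; the delicate step is verifying that the unique surviving branch closes up into $F_{24}$ rather than extending to a larger fullerene, which requires one careful propagation step through a second cyclic edge cut on the outer boundary of the cap.
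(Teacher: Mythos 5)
Your strategy coincides with the paper's: the authors likewise prove this theorem by reusing the argument of \cite{HDW} wholesale for the non-trivial cyclic $5$-edge-cut case and observing that the only overlooked situation is a trivial cyclic $5$-edge cut around a pentagon, which they claim yields exactly the fullerene $F_{24}$. So your identification of where \cite{HDW} breaks down, and your parity observation that $|M\cap T|\in\{1,3,5\}$ for the trivial cut $T$ around the pentagon $P$, are exactly on target.

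The gap is that the decisive step is only promised, not performed. The entire content of this theorem beyond \cite{HDW} is the claim that in the trivial-cut case, every cap configuration either contains an $M$-alternating cycle disjoint from $\{e_1,e_2\}$ (contradiction) or closes up to $F_{24}$; your proposal asserts that ``a case analysis on the pairs $(k,|M\cap T|)$ will show'' this, and that ``the unique surviving branch closes up into $F_{24}$,'' without exhibiting the analysis. In particular, the step you yourself flag as delicate --- propagating the forcing constraint through a second cyclic edge cut on the outer boundary of the cap to rule out larger fullerenes (e.g.\ the tubes $G^k$, which contain many trivial cyclic $5$-edge cuts around their pentagons and must be excluded by finding an alternating cycle avoiding $\{e_1,e_2\}$ deeper in the tube, not merely inside the cap) --- is nowhere carried out, and it is not obvious that the alternating cycles you list (around one $F_i$, around $F_i\cup F_{i+1}$, or around the whole cap) suffice to kill every branch with $|V(F)|>24$. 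Until that case analysis is written down and the closure to $F_{24}$ is verified, the proof is a plan rather than a proof; to be fair, the paper itself is equally terse at this point and delegates the verification to a figure.
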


We also should point out the fact by Yang et al. \cite{YangQ}  that the anti-forcing number of any fullerene graphs are at least 4  still holds, although they applied the wrong lower bound $3$ of the minimum forcing number. This is because  $F_{24}$ has the anti-forcing number $4$, and all the discussions in that fact are not affected if we exclude the fullerene $F_{24}$.

In this paper, we focus on studying properties of fullerenes with the minimum forcing number $3$. By  applying these properties we obtain a procedure to generate all fullerenes with the minimum forcing number $3$.
Hence we give a solution to an open Problem 4.1 proposed by Zhang et al. \cite{HDW}.

\section{Preliminaries}
For a graph $G$ with $\emptyset\not=X\subset V(G)$, let $\partial X$
be the set of edges with only one end in $X$. Then $\partial X$ is an \emph{edge-cut} of $G$.
For a subgraph $H$ of $G$ with $V(H)\neq V(G)$, we denote by $\triangledown_G(H)$ the edge set of $G$ with only one end in $V(H)$.
An edge-cut
$C$ of $G$ is \emph{trivial} if its edges are all incident with the same vertex, and \emph{cyclic} if $G-C$
has at least two components, each containing a cycle.
A cyclic $k$-edge-cut (with $k$ edges) of $G$ is \emph{trivial} if at least one of the two components
is a single cycle of length $k$.
The \emph{cyclic edge-connectivity} of $G$ is the minimum size of cyclic edge-cuts of $G$
\begin{figure}[htbp!]
\centering
\includegraphics[height=3.2cm]{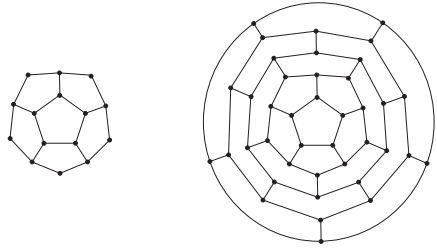}
\caption{\label{G^k}{\small A pentacap (left) and $G^2$(right).}}
\end{figure}
\begin{lem}[\cite{Doslic, YangQ}]\label{3-4-edge-cut}
For a fullerene graph,  every $3$-edge-cut is trivial, every $4$-edge-cut isolates an edge, and the cyclic edge-connectivity is $5$.
\end{lem}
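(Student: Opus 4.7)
Proof plan. Let $F$ be a fullerene graph: cubic, $3$-connected, planar, with every face of length $5$ or $6$; in particular $F$ has girth $5$ and, by Euler's formula, exactly twelve pentagonal faces. The strategy is to study an arbitrary edge-cut $\partial X$ of size $k\leq 5$ with $|X|,|V(F)\setminus X|\geq 2$ by applying Euler's formula to the induced plane subgraph $F[X]$.

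\textbf{Setup.} Since $F$ is cubic and $3$-connected, it is $3$-edge-connected; more generally, if $F[X]$ were disconnected into $m\geq 2$ components, each component would contribute a boundary of size at least $3$, forcing $k\geq 3m$, so for $k\leq 5$ both $F[X]$ and $F[\bar X]$ are connected. Embed $F[X]$ as the plane subgraph inherited from $F$; its inner faces are exactly the faces of $F$ entirely contained in $X$. Writing $n_1=|X|$, one has $|E(F[X])|=(3n_1-k)/2$; letting $a,b$ count pentagonal and hexagonal inner faces, Euler's formula gives $n_1=2(a+b)+k-2$, and a double count of edge--face incidences yields that the outer face of $F[X]$ has degree exactly $a+2k-6$.

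\textbf{Small cuts are trivial.} For $k=3$ the outer face of $F[X]$ has degree $a$, and its boundary walk is the concatenation of the three $X$-side arcs of the three straddling faces, joined at the $X$-endpoints $v_1,v_2,v_3$ of the cut edges, where the $i$-th straddling face splits as $k_i+k'_i\in\{3,4\}$ according as it is a pentagon or hexagon. Since $|E(F[X])|>n_1-1$, $F[X]$ contains a cycle, and hence (examining its block-cut tree, using a leaf block that is $2$-connected) the outer boundary walk contains a simple cycle of $F$; girth $5$ then forces $a\geq 5$, and symmetrically $a'\geq 5$ on the other side. Summing gives $a+a'=\sum(k_i+k'_i)=12-p$ where $p\leq 3$ is the number of straddling pentagons, so $p\leq 2$. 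A refined case analysis of the admissible arc-length splits $(k_i,k'_i)$ subject to $\sum k_i\geq 5$, $\sum k'_i\geq 5$, and the requirement that the three $X$-arcs glue into the boundary walk of a genuine plane subgraph with $|X|\geq 2$ (together with the symmetric constraint for $\bar X$), exhausts all possibilities and yields a contradiction unless some of the $v_i$ coincide, forcing $|X|=1$. For $k=4$, the outer-face degree is $a+2$ and the same machinery forces $|X|\leq 2$; since $|X|=1$ is excluded by parity (a singleton gives a $3$-edge-cut), $|X|=2$ and the cut isolates an edge.

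\textbf{Cyclic edge-connectivity.} The previous two items together say that any edge-cut of size at most $4$ isolates either a single vertex or a single edge, neither of which contains a cycle; hence $F$ has no cyclic edge-cut of size $\leq 4$. The matching upper bound $5$ is realized by the $5$-edge-cut separating any pentagonal face of $F$ from its complement: the pentagon itself is a $5$-cycle, and since every fullerene has $|V(F)|\geq 20$, the complementary side necessarily contains a cycle as well.

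\textbf{Main obstacle.} The delicate step is the arc-length case analysis for $k=3$. When $F[X]$ fails to be $2$-connected, the outer boundary walk is non-simple (bridges are traversed twice, cut vertices are visited repeatedly), so one must carefully identify the genuine short cycle hidden inside the walk and rule out configurations in which the arcs $(k_i,k'_i)$ satisfy all of $\sum k_i\geq 5$, $\sum k'_i\geq 5$, $k_i+k'_i\in\{3,4\}$; examining the block-cut tree of $F[X]$ and using $3$-connectedness of $F$ to restrict how the cut edges meet its cut vertices is the key tool.
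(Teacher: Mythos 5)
The paper does not prove this lemma at all: it is quoted as a known result from Do\v{s}li\'{c} and from Yang, Zhang and Lin, so there is no in-paper argument to measure yours against; I can only judge the proposal on its own terms. Your setup is sound and the bookkeeping is correct: for a $k$-edge-cut $\partial X$ with both sides connected, $|E(F[X])|=(3n_1-k)/2$, the outer face of $F[X]$ has degree $a+2k-6$ (granting that $\bar X$ sits in the outer face, which your connectivity remark justifies), and the relation $a+a'=12-p$ with $a,a'\geq 5$ forced by girth is right. The reduction of the first two assertions to cyclic edge-connectivity is also the standard and correct route.

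However, there is a genuine gap at the decisive step, and your own ``Main obstacle'' paragraph concedes it: the ``refined case analysis of the admissible arc-length splits'' is announced but never performed, and the numerical constraints you actually derive do not by themselves produce a contradiction. For $k=3$ the conditions $\sum k_i\geq 5$, $\sum k_i'\geq 5$, $k_i+k_i'\in\{3,4\}$ are simultaneously satisfiable --- for instance $(k_1,k_2,k_3)=(2,2,1)$, $(k_1',k_2',k_3')=(2,2,2)$ gives $a=5$, $a'=6$, $p=1$, and Euler's formula is consistent with such a patch having any number $b$ of interior hexagons --- so ruling these configurations out requires a genuinely geometric argument about which plane pentagon/hexagon patches with prescribed boundary data can exist inside a fullerene, not just arithmetic. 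The same applies to $k=4$, where ``the same machinery forces $|X|\leq 2$'' is asserted without proof (the counting only excludes $n_1=4$ via girth; for $n_1\geq 6$ nothing is contradicted). This missing step is exactly the hard part of the theorem: Do\v{s}li\'{c}'s original argument was itself incomplete at this point and was later repaired by Kutnar--Maru\v{s}i\v{c} and by Kardo\v{s}--\v{S}krekovski, so the omission cannot be waved through as routine. To complete your approach you would need either to carry out the patch classification (e.g.\ via a discharging or boundary-code argument showing no pentagon/hexagon patch has boundary length $\leq 7$ while containing at least five pentagons and a cycle), or to invoke one of those published proofs directly.
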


We denote by $G^k$ the tubular fullerene graph comprised of two pentacaps and $k$ layers of hexagons between them. For example,  see Fig. \ref{G^k} for $k=2$.
\begin{thm}[\cite{FR, Kutnar2008}]
A fullerene graph has a non-trivial cyclic $5$-edge-cut if and only if it is
isomorphic to the graph $G^k$ for some integer $k\geq1$.
\end{thm}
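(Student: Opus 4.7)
The ``if'' direction is immediate: in $G^k$ for $k\ge 1$, the five edges joining the top pentacap to the adjacent hexagonal layer form a cut separating the top pentacap from the rest. Each resulting component contains a cycle and neither is a single $5$-cycle, so this is a non-trivial cyclic $5$-edge cut.

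For the ``only if'' direction, let $S$ be a non-trivial cyclic $5$-edge cut of a fullerene $F$ and $F_1, F_2$ the components of $F-S$. The plan is a three-stage argument: normalize $S$, classify the faces adjacent to $S$ as a hexagonal belt, and peel belts off recursively. First, I would show no vertex is incident to two cut edges: if $v\in F_i$ were, reassigning $v$ to the opposite side yields a $4$-edge cut, which by Lemma~\ref{3-4-edge-cut} must isolate an edge, contradicting $|F_i|\ge 6$ (which follows from non-triviality). Hence $S$ has five distinct endpoints on each side. Second, by planar cut-cycle duality, $S$ corresponds to a $5$-cycle in the dual graph of $F$, so the five cut edges lie on exactly five faces $R_1,\ldots,R_5$ of $F$ arranged cyclically, each containing two non-adjacent cut edges. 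I would case-analyze each $R_i$ by face size (pentagon vs hexagon) and by the cyclic distance between its two cut edges. Only when $R_i$ is a hexagon with cut edges at distance $3$ are the vertices of $R_i$ split $3$-$3$ between $F_1$ and $F_2$; in every other case, two consecutive cut endpoints become adjacent on one side of $S$, and I would argue (using non-triviality and Lemma~\ref{3-4-edge-cut}) that this produces a derived $3$- or $4$-edge cut of $F$ that is forbidden. Hence all five cut faces are hexagons with opposite cut edges, forming a hexagonal belt straddling $S$; each side of $S$ then appears as a ``cap'' bounded by a $10$-cycle.

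Third, I would apply Euler's formula to a cap with $n_i$ vertices bounded by a $10$-cycle, concluding that each cap contains exactly six pentagons and $(n_i-15)/2$ hexagons. If a cap is not already a pentacap ($n_i>15$), the same cut-face analysis applied to its outer $10$-cycle yields another hexagonal belt inside the cap, whose removal exposes a smaller cap with another $10$-cycle boundary; iterating, each cap decomposes into a pentacap attached to $\ell_i\ge 0$ hexagonal layers of five hexagons each. Combining the two caps with the central hexagonal belt of $S$, the fullerene has the structure $G^{\ell_1+\ell_2+1}$. The main obstacle is the classification of cut faces in the second stage: ruling out pentagonal and distance-$2$ hexagonal cut faces must combine planarity, Lemma~\ref{3-4-edge-cut}, and the non-triviality of $S$ carefully, since a pentagonal cut face could squeeze $S$ toward the trivial case (a single pentagonal face on one side) and a distance-$2$ hexagonal face could expose a smaller edge cut. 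Once the hexagonal belt is established, the recursive peeling in the third stage reuses the same classification argument on each successive $10$-cycle boundary.
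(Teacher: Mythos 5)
First, note that the paper does not prove this theorem at all: it is imported verbatim from \cite{FR, Kutnar2008}, so the only available comparison is with the argument in those references. Your overall architecture --- cut edges pairwise nonadjacent, hence a ring of five cut faces via planar duality, hence a hexagonal belt, hence two caps that peel down to pentacaps --- is indeed the shape of the known proof, and your first stage (a vertex on two cut edges would yield a $4$-edge-cut contradicting Lemma \ref{3-4-edge-cut}) and your Euler count for the caps are sound.

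The genuine gap is in your second stage, and it is the crux of the whole theorem. You propose to exclude a pentagonal cut face, or a hexagonal cut face whose two cut edges are at distance $2$, by arguing that the resulting pair of adjacent cut-endpoints $u_1,u_2$ on one side ``produces a derived $3$- or $4$-edge cut.'' It does not: each of $u_1,u_2$ has exactly one further neighbour inside its component, so $\triangledown_F(F_1\setminus\{u_1,u_2\})$ has $5-2+2=5$ edges again, and an \emph{isolated} bad face is locally consistent with girth $5$ and with Lemma \ref{3-4-edge-cut} (one only gets a $4$-edge-cut when two bad faces are consecutive, so that some cut-endpoint has both of its internal neighbours among the cut-endpoints). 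The exclusion in \cite{FR} is global, not local: applying the relation of Lemma \ref{formula} with $n_1=0$, $n_2=5$ gives $l=4+f_5$ for a component $H$ of a cyclic $5$-edge-cut, so if $H$ had at most five pentagons its outer face would have length at most $9$, which a girth-$5$ analysis forces to mean $H$ is a single $5$-cycle; non-triviality therefore forces exactly six pentagons in \emph{each} component, whence no cut face is a pentagon and both boundaries have length $10$, and only then does a face-by-face analysis of the two $10$-cycles eliminate the remaining ``meta''-hexagon configurations and establish the belt. Without this counting step the belt is never established and your third stage (which quietly reuses the same classification on each successive $10$-cycle, and would also need to rule out a mixed pentagon/hexagon ring there) has nothing to stand on.
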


From the proof of the theorem or Ref. \cite{LZ18} we  easily have the following lemma.
\begin{lem}\label{component-of-non-trivial-cyclic-5-edge-cut}
Let $S$ be a non-trivial cyclic $5$-edge-cut of fullerene $G^k$. Then $G^k-S$ has two components $H_1$ and $H_2$, and $H_i$ consists of a pentacap and $l$ \emph{(}$0\leq l\leq k-1$\emph{)} layers of hexagons around it. Moreover, for any $2$-degree vertex $x$ in $H_i$, $H_i-x$ is $2$-connected.
\end{lem}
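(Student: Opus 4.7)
The first assertion comes essentially from the structural analysis underlying Theorem~2.2 (cf.\ \cite{LZ18}). In the canonical tubular embedding of $G^k$, any non-trivial cyclic $5$-edge-cut must be a $5$-edge cross-section of the nanotube—lying either between two consecutive hexagonal layers or between a pentacap and its neighboring layer—so cutting along $S$ produces two capped half-tubes of the stated form. The vertex identity $(15+10\ell_1)+(15+10\ell_2)=|V(G^k)|=20+10k$ then forces $\ell_1+\ell_2=k-1$, in particular $0\le\ell_i\le k-1$. This also pins down the five degree-$2$ vertices of $H_i$: they are exactly the $H_i$-endpoints of the five edges in $S$, and they alternate with five degree-$3$ vertices on the outer boundary $10$-cycle of $H_i$, so no two of them are adjacent.

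For the second assertion, my plan is to \emph{suppress} all five degree-$2$ vertices simultaneously: for each such $v$ with neighbors $a_v,b_v$, delete $v$ and add the edge $a_vb_v$. Call the resulting plane cubic graph $\tilde H_i$. A direct face analysis identifies $\tilde H_i$ up to isomorphism. When $\ell=0$, the five surrounding pentagons of the pentacap each lose one vertex and become quadrilaterals, and the new outer face is a pentagon, so $\tilde H_i$ is the pentagonal prism, which is $3$-connected. When $\ell\ge 1$, the suppressed vertices sit in the outermost hexagonal layer; each incident hexagon becomes a pentagon and the new outer face is also a pentagon, giving $\tilde H_i$ exactly $12$ pentagonal faces and $5(\ell-1)$ hexagonal faces, and combinatorially $\tilde H_i\cong G^{\ell-1}$, which is $3$-connected by definition of a fullerene.

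To finish, since $\tilde H_i$ is $3$-connected, $\tilde H_i-e$ is $2$-connected for every edge $e$ (removing a single edge from a $3$-connected graph cannot create a cut vertex). Applying this to $e=e_x$, the edge that replaced $x$ in the suppression, and then undoing the other four suppressions by subdividing the corresponding four edges of $\tilde H_i-e_x$—an operation that preserves $2$-connectivity—one recovers exactly $H_i-x$; hence $H_i-x$ is $2$-connected as required. The main obstacle in this plan is the face-bookkeeping step in the case $\ell\ge1$: one must verify carefully that the outermost hexagons really do transform into the surrounding pentagons of a new outer pentacap, and that the $\ell-1$ remaining hexagonal layers arrange themselves exactly as in $G^{\ell-1}$. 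Once that identification is clean, the rest is routine.
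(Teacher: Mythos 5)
Your proposal is correct, but it cannot be "the same approach as the paper" because the paper offers no proof at all: the lemma is stated with the remark that it follows "from the proof of the theorem or Ref.~\cite{LZ18}", i.e.\ the authors defer entirely to the structure theorem of Kardo\v{s}--\v{S}krekovski and Kutnar--Maru\v{s}i\v{c} and to \cite{LZ18}. You defer to the same source for the first assertion (that every non-trivial cyclic $5$-edge-cut of $G^k$ is one of the $k$ cross-sections of the tube), which is legitimate, and your vertex count $|V(G^k)|=20+10k$, $|H_i|=15+10\ell_i$, giving $\ell_1+\ell_2=k-1$, is the correct reading of the paper's indexing (it is the only one consistent with the bound $0\le l\le k-1$ in the statement and with the restriction $k\ge1$ in Theorem~2.2). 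What you add beyond the paper is an actual argument for the $2$-connectivity claim, and it holds up: the five degree-$2$ vertices of $H_i$ are pairwise non-adjacent on the boundary $10$-cycle and no suppression creates a multi-edge or triangle (girth $5$), so $\tilde H_i$ is a simple cubic plane graph; the face bookkeeping you flag as the main obstacle does work out ($\ell=0$ gives the pentagonal prism, $\ell\ge1$ gives $G^{\ell-1}$, reading $G^0$ as the dodecahedron, all $3$-connected); deleting one edge of a $3$-connected graph leaves a $2$-connected graph, and subdividing the remaining four suppressed edges preserves $2$-connectivity and recovers exactly $H_i-x$. This suppression-and-restore route is a clean, checkable substitute for the paper's bare citation; the only thing it buys less of is brevity.
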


A cyclic edge-cut $C$ of a fullerene graph $F$ is \emph{non-degenerate} if both components of $G-C$ contain precisely six pentagons, and \emph{degenerate} otherwise. So the trivial cyclic edge-cuts of $F$ are degenerate and the non-trivial cyclic $5$-edge-cuts of $F$ are non-degenerate.

A \emph{patch} of a fullerene graph $F$ is a $2$-connected subgraph of $F$ whose all interior
faces are faces of $F$ and all vertices not on the outer face have degree $3$.
A \emph{generalized patch} of $F$ is a connected plane subgraph where all interior faces (if exists) are
faces of $F$, and vertices not on the outer face have
degree $3$ and vertices on the outer face have degree $1$, $2$ or $3$. Clearly, a patch is also a generalized patch.

By Theorem $3$ in \cite{FR}, we have the following lemma.
\begin{lem}\label{5-pentagons-cylic-6-edge-cut}
Let $H$ be a patch of a fullerene $F$ and $\triangledown_F(H)$ a degenerate cyclic $6$-edge-cut of $F$ such that $H$ has at most five pentagons. If $H$ has eaxctly five pentagons or $H$ has at least $14$ vertices, then $H\cong P^1$ or $P^2$ as shown in Fig. \emph{\ref{sub-patches}}.
\end{lem}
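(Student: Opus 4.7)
My plan is to combine Euler-type combinatorial constraints with the classification theorem cited as Theorem $3$ in \cite{FR}. First I would translate the hypothesis $|\triangledown_F(H)|=6$ into numerical invariants of $H$. Let $b_2$ and $b_3$ denote the numbers of boundary vertices of $H$ of degree $2$ and $3$ in $H$ respectively, $i$ the number of interior vertices, and $p$, $h$ the numbers of pentagons and hexagons. Because $H$ is $2$-connected with interior vertices of degree $3$ and boundary vertices of degree $2$ or $3$, no two edges of $\triangledown_F(H)$ share an endpoint in $H$, so $b_2 = |\triangledown_F(H)| = 6$. Applying Euler's formula and counting face--edge incidences then yields the two clean identities
\[
p = b_3, \qquad |V(H)| = 2(p+h+2).
\]
In particular $|V(H)|\ge 14$ is equivalent to $p+h\ge 5$, and the outer boundary of $H$ is a cycle of length $6+p$.

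The second step is to invoke Theorem $3$ in \cite{FR}, which classifies the patches of a fullerene with a given small pentagon count and a prescribed number of boundary edges leaving the patch. Since $\triangledown_F(H)$ is degenerate, $H$ does not contain exactly six pentagons, so the hypothesis $p\le 5$ sits within the regime covered by that theorem. This reduces the lemma to a finite inspection of the candidate patches produced.

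Third, I would filter the candidate list using the hypothesis ``$p=5$ or $|V(H)|\ge 14$''. The identity $|V(H)|=2(p+h+2)$ rules out the small patches with $p\le 4$ and too few hexagons, while $p=5$ already forces $|V(H)|\ge 14$. A direct comparison of the surviving candidates with $P^1$ and $P^2$ in Fig.~\ref{sub-patches} should complete the argument.

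The main obstacle will be this final verification: one has to rule out every admissible patch other than the two displayed, in particular every $p=5$, $h\ge 1$ configuration that packs the pentagons differently near the boundary. I would handle this by fixing the outer boundary (a cycle of length $6+p$ with the six degree-$2$ vertices placed cyclically) and propagating face choices inward one face at a time, using the $3$-regularity of interior vertices together with the pentagon/hexagon constraint to force the configuration. The propagation should terminate in exactly two combinatorial types, matching $P^1$ and $P^2$.
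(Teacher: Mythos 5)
Your proposal is correct and follows essentially the same route as the paper, which derives this lemma directly from Theorem 3 of \cite{FR} (the classification of the small side of a degenerate cyclic $6$-edge-cut), with your Euler-type identities being exactly the content of the paper's Lemma \ref{formula} used to filter the finite candidate list down to $P^1$ and $P^2$. The final ``propagation'' step you sketch is not needed once the classification theorem is invoked, since that theorem already yields the finite list to inspect.
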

\begin{figure}[htbp!]
\centering
\includegraphics[height=2.4cm]{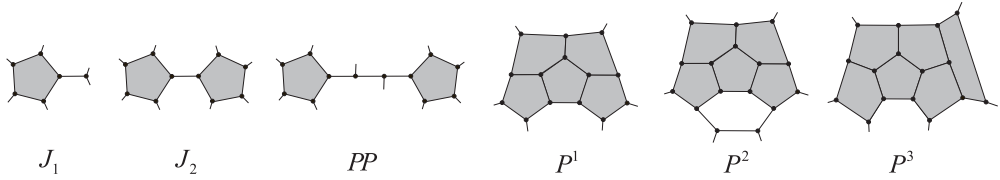}
\caption{\label{sub-patches}{\small Six generalized patches of fullerenes.}}
\end{figure}

From Ref. \cite{cyclic-7}, we have the following.
\begin{lem}\label{chooseD}
Let $H$ be a generalized patch of a fullerne $F$ with at most five pentagons and $\triangledown_F(H)$ be a degenerate cyclic $7$-edge-cut of $F$. Then $H$ is isomorphic to one of the patches $D_{01}$, \ldots, $D_{57}$ as shown in Fig. \emph{7} in Ref. \emph{\cite{cyclic-7}}. Moreover, if $H$ is $2$-connected and has a $2$-degree vertex $x$ such that $H-x$ has a unique perfect matching, then $H$ is one of the patches as depicted in Fig. \emph{\ref{patches}}, and such $2$-degree vertices are the white vertices as shown in Fig. \emph{\ref{patches}}.
\end{lem}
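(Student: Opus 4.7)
The first assertion is imported directly from Ref.~\cite{cyclic-7}: that paper already enumerates all generalized patches of a fullerene with at most five pentagons whose outer boundary is a degenerate cyclic $7$-edge-cut, yielding exactly the list $D_{01},\ldots,D_{57}$. So the real work is the second assertion, which I would obtain by filtering this list under the two extra conditions, namely that $H$ be $2$-connected and that $H$ contain a degree-$2$ vertex $x$ with $H-x$ carrying a unique perfect matching.

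The plan is a systematic finite case check. First I would walk through the $57$ patches and discard every $D_{ij}$ that is not $2$-connected, i.e.\ contains a cut vertex or a bridge; this already removes many of the patches with only a few pentagons. For each surviving $H$ I would list its boundary vertices of degree $2$ and test each as a candidate $x$ for the uniqueness condition. The test relies on the standard reduction (essentially Kotzig's theorem) that a connected graph with a unique perfect matching always contains a bridge lying in the matching; iterating this, and in particular using that any degree-$1$ vertex forces its incident edge into the matching, one tries to exhaust $H-x$ by repeatedly removing a forced edge together with its two endpoints. In $H-x$ this reduction starts at the two neighbours of $x$, which necessarily have degree $1$, and is then pushed inward through the patch.

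The obstruction to uniqueness is the persistence of a face in $H-x$ whose boundary is not fully pinned by the propagating forced edges, since such a face immediately provides an alternating even cycle and a second perfect matching. As the interior faces are pentagons or hexagons and the boundary edges are few in number, this is a purely local check. To keep the enumeration tractable I would stratify the cases by number of pentagons (from $1$ to $5$), and within each stratum by the cyclic degree sequence along the boundary; most candidate pairs $(H,x)$ are killed by a single bad face. Only a short list of patches survives, and this list is exactly Fig.~\ref{patches}, with the admissible $x$ in each patch coinciding with the marked white vertex. The main obstacle is sheer bookkeeping — roughly $57$ patches times a handful of candidate $2$-degree vertices each — rather than any conceptually hard step; the danger is mis-classifying a borderline case, so I would double-check each surviving patch by writing down its forced matching explicitly.
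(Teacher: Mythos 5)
Your proposal matches the paper's treatment: the paper offers no explicit proof of this lemma, simply importing the classification of degenerate cyclic $7$-edge-cut patches from Ref.~\cite{cyclic-7} and leaving the second assertion as a finite filtering of the $57$ patches by $2$-connectivity and the unique-perfect-matching test at each $2$-degree vertex, which is exactly the bookkeeping you describe (with Kotzig's bridge theorem, Theorem~\ref{unique-pm}, driving the forced-edge propagation just as the paper uses it elsewhere). Your approach is correct and essentially identical to the paper's.
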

\begin{figure}[htbp!]
\centering
\includegraphics[height=5.6cm]{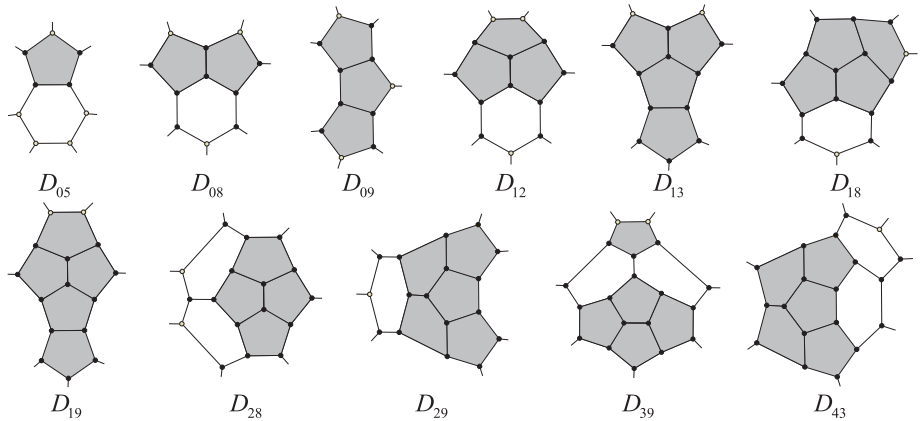}
\caption{\label{patches}{\small Eleven patches of fullerenes.}}
\end{figure}
In the following, we set $\mathcal{D}:=\{D_{05}, D_{08}, D_{09}, D_{12}, D_{13}, D_{18}, D_{19}, D_{28}, D_{29}, D_{39}, D_{43}\}$.

\begin{lem}[\cite{cyclic-7}]\label{formula}
Let $C$ be an edge-cut in a fullerene graph $F$ and $H$ a component of $F-C$.
Let $n_1$ and $n_2$ be the numbers of vertices of degree one and two, $f_5$ the number of
pentagons, and $l$ the size of the outer face of $H$. Then, $6-f_5=4n_1+2n_2-l$.
\end{lem}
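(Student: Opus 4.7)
The plan is to derive the identity from Euler's formula applied to $H$, combined with a double count of face--edge incidences and the handshake lemma. The argument is purely combinatorial, and the structural fact I would exploit is that, in the plane embedding of $H$ inherited from $F$, every interior face is a pentagon or a hexagon.

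First I would set up notation: let $n$, $m$, and $f$ be the numbers of vertices, edges, and faces of $H$, and let $f_6$ count the hexagonal interior faces. Since the outer face has length $l$, one has $f=f_5+f_6+1$, so Euler's formula applied to the connected plane graph $H$ yields
\begin{equation*}
f_6 = 1 - n + m - f_5.
\end{equation*}

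Next I would use the face-length count, where each edge is counted twice: $5f_5+6f_6+l=2m$. Substituting the expression for $f_6$ above and simplifying immediately produces
\begin{equation*}
6 - f_5 = 6n - 4m - l.
\end{equation*}

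Finally I would eliminate $n$ and $m$ in favour of $n_1$ and $n_2$. Letting $n_3$ denote the number of degree-$3$ vertices of $H$, one has $n=n_1+n_2+n_3$ and the handshake lemma gives $n_1+2n_2+3n_3=2m$; a one-line calculation then yields $6n-4m=4n_1+2n_2$, and substituting into the previous display gives the claimed identity. The only step that requires any care, and the closest thing to an obstacle in this otherwise routine derivation, is the implicit hypothesis that each interior face of $H$ is a face of $F$; this is automatic in the (generalized) patch setting in which the lemma is invoked throughout the paper.
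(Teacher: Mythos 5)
Your derivation is correct, and it is essentially the argument behind the cited result: the paper itself offers no proof of this lemma (it is imported from \cite{cyclic-7}), and the proof there is the same combination of Euler's formula for $H$, the face--edge incidence count $5f_5+6f_6+l=2m$, and the handshake lemma, relying on the same implicit hypothesis that every interior face of $H$ is a (pentagonal or hexagonal) face of $F$ --- a point you rightly flag and which holds in every setting where the paper invokes the formula.
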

Recall that a \emph{bridge} of a graph $G$ is an edge $e$ such that $G-e$ has more connected components than $G$.
\begin{thm}[\cite{AK}]\label{unique-pm}
Let $G$ be a graph with a unique perfect matching. Then $G$
has a bridge belonging to the perfect matching.
\end{thm}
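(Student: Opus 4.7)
My plan is to prove this classical result (usually attributed to Kotzig) by induction on $|V(G)|$. The base case $|V(G)|=2$ is immediate: the graph is a single edge, which simultaneously constitutes the unique perfect matching and is trivially a bridge. For the inductive step I may assume $G$ is connected, since each component of a graph with a unique perfect matching inherits a unique perfect matching (two distinct perfect matchings in one component would combine with fixed matchings on the other components to yield two distinct perfect matchings of $G$). The heart of the argument is the following claim.

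\emph{Key claim.} Every connected graph $G$ with a unique perfect matching $M$ and $|V(G)|\geq 2$ contains a vertex of degree $1$.

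Granted the claim, let $v$ be such a vertex and $e$ its unique incident edge. Then $e\in M$ (else $v$ is unmatched), and $e$ is a bridge since its deletion isolates $v$. This is the conclusion of the theorem.

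To prove the claim I argue by contradiction and aim to exhibit an $M$-alternating cycle $C$---one whose edges strictly alternate between $M$ and $E(G)\setminus M$---after which the symmetric difference $M\triangle E(C)$ is a second perfect matching of $G$, violating uniqueness of $M$. So assume every vertex has degree at least $2$. Starting from any vertex $v_0$ I grow an $M$-alternating walk $v_0,v_1,v_2,\ldots$ by taking the (unique) matching partner at even-indexed steps and, at odd-indexed steps, freely picking any non-matching neighbor of $v_{2k+1}$ different from $v_{2k}$---such a neighbor exists because $\deg(v_{2k+1})\geq 2$ and the matching edge at $v_{2k+1}$ accounts for only one incident edge. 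Since $V(G)$ is finite the walk must repeat a vertex; let $j$ be the least index with $v_j=v_i$ for some $i<j$. The closed subwalk $v_iv_{i+1}\cdots v_j$ has length $j-i$ and, by construction, its edges alternate between $M$ and $E(G)\setminus M$, so it is an $M$-alternating cycle as soon as $i$ and $j$ have the same parity.

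The main subtlety, and what I expect to be the principal obstacle, is arranging the parities. The mismatch $i$ even, $j$ odd is ruled out immediately: if $v_i=v_j$ then the uniqueness of matching partners forces $v_{i+1}=v_{j-1}$, contradicting the minimality of $j$. The opposite mismatch $i$ odd, $j$ even requires more delicate control of the walk---concretely, one selects the non-matching neighbor at each odd step so as to avoid previously visited odd-indexed vertices whenever possible, and handles residual cases by a parallel argument after extending or restarting the walk from a different initial vertex. Once an $M$-alternating cycle $C$ is produced, the symmetric-difference $M\triangle E(C)$ furnishes a second perfect matching, contradicting the uniqueness of $M$ and completing the proof of the claim, and hence of the theorem.
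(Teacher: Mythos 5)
The paper does not prove this statement; it is quoted from Kotzig's 1959 paper, so your proposal must be judged on its own merits. Unfortunately, your key claim is false, and the gap you yourself flag as ``the main subtlety'' is not a technical nuisance but the point where the argument collapses. Counterexample to the claim: take two triangles $abc$ and $def$ joined by the single edge $cd$. Any perfect matching must use $cd$ (otherwise $c$ is matched inside $abc$ and the remaining vertex of that triangle cannot be covered), which forces $ab$ and $ef$; so $M=\{ab,cd,ef\}$ is the unique perfect matching, yet every vertex has degree at least $2$. Consequently no connected graph with a unique perfect matching need have a degree-$1$ vertex, and deducing the bridge from such a vertex cannot work.

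The failure of your alternating-walk argument is visible in the same example: starting the walk at $a$ you are driven to $a,b,c,d,e,f,d$, closing the odd cycle $def$, which is the bad-parity case $i$ odd, $j$ even. No choice of non-matching neighbours and no restarting can avoid this, because the graph contains no $M$-alternating cycle at all (its only cycles are the two triangles, which are odd). The uniqueness of $M$ is in fact \emph{equivalent} to the absence of $M$-alternating cycles, so any proof that tries to manufacture one under the mere hypothesis ``minimum degree $\geq 2$'' is doomed. A correct route is different: for instance, induct on $|V(G)|$, pass to an end-block $B$ of $G$ (a block containing at most one cut vertex), and show that if $B$ is $2$-connected it admits an $M$-alternating cycle, forcing $B$ to be a single edge $e\in M$, which is then a bridge; alternatively, one can argue on a matching edge $uv$ minimizing a suitable distance-type parameter and derive an alternating cycle from any cycle through $uv$. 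Either way, the bridge must be located structurally rather than via a pendant vertex.
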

\begin{prop}\label{cyclic-6-edge-cut}
Suppose that $H$ is an induced subgraph of a fullerene $F$ and $H$ has a unique perfect matching $M$. If $\triangledown_F(H)$ is a cyclic $6$-edge-cut, then $H\cong J_1$ \emph{(}see Fig. \emph{\ref{sub-patches}}\emph{)}. If $\triangledown_F(H)$ is an $8$-edge-cut and $H$ has not $1$-degree vertex, then $H\cong J_2$ \emph{(}see Fig. \emph{\ref{sub-patches}}\emph{)}.
\end{prop}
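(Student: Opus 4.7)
Write $n_i=|\{v\in V(H):\deg_H(v)=i\}|$. Cubicity of $F$ gives the cut-budget identity $|\triangledown_F(H)|=2n_1+n_2$, so $2n_1+n_2=6$ in Part~1 and $2n_1+n_2=8$ in Part~2. By Theorem~\ref{unique-pm}, the unique perfect matching $M$ contains a bridge $e$ of $H$; let $H_1,H_2$ be the components of $H-e$. Because $e$ becomes a cut edge on both sides,
\[
  |\triangledown_F(H_1)|+|\triangledown_F(H_2)|=|\triangledown_F(H)|+2.
\]
This cut-splitting identity, together with Lemma~\ref{3-4-edge-cut}, is the main tool.

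\emph{Part~1.} Cyclicity of $\triangledown_F(H)$ forces $H$ to contain a cycle. I rule out $(n_1,n_2)\in\{(0,6),(2,2),(3,0)\}$, leaving $(1,4)$. For $(0,6)$, if $|\triangledown_F(H_i)|\leq 4$ then Lemma~\ref{3-4-edge-cut} says the cut isolates a vertex or a single edge, and tracing back through the bridge $e$ shows that the isolated object would contain a pendant of $H$, contradicting $n_1=0$; hence both $|\triangledown_F(H_i)|\geq 5$, but their sum is $8$, contradiction. Cases $(3,0)$ and $(2,2)$ are excluded by pendant-removal reductions combined with the fullerene's girth-$5$ constraint, which prevents the short cycles required to attach multiple pendants in an induced subgraph. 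For case $(1,4)$, let $p$ be the unique pendant, $a$ its neighbor (with $\deg_H(a)=3$), and $b_1,b_2$ the other neighbors of $a$. Set $H'=H-\{p,a\}$; then $M\setminus\{pa\}$ is the unique PM of $H'$ and a direct count gives $|\triangledown_F(H')|=6$. If $H'$ contains a cycle, then $\triangledown_F(H')$ is cyclic (since $F-H'\supseteq F-H$ still contains a cycle) and a recursive application of Part~1 gives $H'\cong J_1$; reassembling $H$ via $pa,ab_1,ab_2$ would then make two faces of $F$ share two or three consecutive edges, or would introduce a $4$-cycle, both impossible in a fullerene. Hence $H'$ is a forest. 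A forest with a perfect matching has at least two leaves in each component, so $n_1(H')\geq 2$; combined with the cut-budget $2n_1(H')+n_2(H')=6$ this forces $H'\cong P_4$ with $b_1,b_2$ as its endpoints. Reassembling, $H$ is a pentagon on $\{a,b_1,c,d,b_2\}$ (where $c,d$ are the interior vertices of $P_4$) with the pendant edge $pa$, namely $H\cong J_1$.

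\emph{Part~2.} Now $n_1=0$ and the bridge $e=xy$ is internal. The cut sum is $10$, and ruling out cut sizes $\leq 4$ on each side by the Part~1 argument (such small cuts would force a pendant of $H$) forces $|\triangledown_F(H_i)|=5$ for both sides. Each $H_i$ contains a cycle, otherwise $H_i$ would be a tree with $\geq 2$ leaves and a leaf other than the bridge endpoint would be a pendant of $H$. So each $\triangledown_F(H_i)$ is a cyclic $5$-edge-cut. By the classification theorem of cyclic $5$-edge-cuts in fullerenes (the theorem preceding Lemma~\ref{component-of-non-trivial-cyclic-5-edge-cut}, from \cite{FR,Kutnar2008}), either the cut is trivial and $H_i$ is a single pentagon, or it is non-trivial and $F\cong G^k$ with $H_i$ a pentacap possibly augmented with layers of hexagons. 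In the non-trivial sub-case, Lemma~\ref{component-of-non-trivial-cyclic-5-edge-cut} gives that $H_i-x$ (resp.\ $H_i-y$) is $2$-connected of order $\geq 14$ and contains at least one interior hexagon; a standard alternating-cycle flip around such a hexagon produces a second perfect matching of $H_i-x$, contradicting uniqueness of $M$. Hence each $H_i$ is a pentagon with the bridge endpoint as one of its vertices, and reassembling along $e$ gives $H\cong J_2$.

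\emph{Main obstacle.} The tightest step is the recursive analysis of $H'$ in Case~$(1,4)$ of Part~1: forbidding $H'$ from containing a cycle relies on the fullerene's rigid planar face structure (any two faces share at most one edge, every face is a pentagon or a hexagon) together with the cut-budget of $6$, which together pin the unique cycle of $H$ to a pentagon. The analogous alternating-cycle flip argument in Part~2 is technically easier once Lemma~\ref{component-of-non-trivial-cyclic-5-edge-cut} is in hand.
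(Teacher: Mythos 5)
Your skeleton — Kotzig's bridge $e\in M$, the cut-splitting identity $|\triangledown_F(H_1)|+|\triangledown_F(H_2)|=|\triangledown_F(H)|+2$, parity and the lower bound $|\triangledown_F(H_i)|\geq 3$, Lemma \ref{3-4-edge-cut}, and the trivial/non-trivial dichotomy for cyclic $5$-edge-cuts — is exactly the paper's toolkit, and your Part 2 is essentially the paper's argument. But your Part 1 replaces the paper's two-line observation with a degree-sequence case analysis plus a recursive pendant-stripping scheme, and that is where the gaps are. First, the exclusion of $(n_1,n_2)=(2,2)$ and $(3,0)$ is only asserted; "the girth-$5$ constraint prevents the short cycles required to attach multiple pendants" is not an argument (for large $H$ nothing about girth alone forbids several pendants), and these cases must actually be killed by a cut argument — e.g.\ for $(3,0)$ the three non-pendant-incident edges form a $3$-edge-cut around the degree-$3$ part, which by Lemma \ref{3-4-edge-cut} is then a single vertex, making $H\cong K_{1,3}$ acyclic; for $(2,2)$ one gets a $4$-edge-cut and the "isolates an edge" conclusion. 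Equivalently, the paper's route disposes of all of this at once: the bridge splits $H$ into a $3$-edge-cut side (a single vertex, hence the unique pendant) and a $5$-edge-cut side, which is cyclic and, by Lemma \ref{component-of-non-trivial-cyclic-5-edge-cut} plus uniqueness of $M$, must be the trivial pentagon — no recursion, no forest base case, no "reassembly" contradiction (which you only sketch, although it can be made to work).

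Second, in Part 2 your mechanism for ruling out the non-trivial cyclic $5$-edge-cut — "an alternating-cycle flip around an interior hexagon of $H_i-x$" — fails in the smallest case: when $H_i$ is a bare pentacap ($l=0$ layers) there are no hexagonal faces at all. What Lemma \ref{component-of-non-trivial-cyclic-5-edge-cut} actually gives you is that $H_i-v_i$ is $2$-connected of order $\geq 14$; since a graph with a unique perfect matching must contain a bridge of that matching (Theorem \ref{unique-pm} again), a $2$-connected graph of this order with a perfect matching has at least two, and that is the contradiction with uniqueness of $M$. So the conclusion of your Part 2 is right, but the cited mechanism is not the one that works, and the $(2,2)/(3,0)$ step in Part 1 needs a real proof before the plan is complete.
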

\begin{proof}
Since $H$ has a unique perfect matching $M$, $H$ has a bridge $e$ with $e\in M$ by Theorem \ref{unique-pm}.
Clearly, deleting $e$ from $H$ makes two new connected components. We denote one of them by $H_1$, and set $H_2:=H-H_1$.
$|\triangledown_F(H_i)|$ is odd and $|\triangledown_F(H_i)|\geq 3$, $i=1, 2$ since $H_i$ has odd number of vertices and $F$ is a $3$-connected cubic graph. Without loss of generality, we suppose that $|\triangledown_F(H_1)|\leq|\triangledown_F(H_2)|$.

If $\triangledown_F(H)$ is a cyclic $6$-edge-cut of $F$, then $|\triangledown_F(H_1)|+|\triangledown_F(H_2)|=8$. So $\triangledown_F(H_1)$ is a $3$-edge-cut and $\triangledown_F(H_2)$ is a $5$-edge-cut. By Lemma \ref{3-4-edge-cut},  $H_1$ is an isolated vertex. So $H_2$ has a cycle, and further $\triangledown_F(H_2)$ is a cyclic $5$-edge-cut. By Lemma \ref{component-of-non-trivial-cyclic-5-edge-cut}, $\triangledown_F(H_2)$ is a trivial cyclic $5$-edge-cut, otherwise, $H$ has at least two perfect matchings, a contradiction.
Since $\triangledown_F(H)$ is a cyclic $6$-edge-cut, $F-H$ is connected and has cycles. So $F-H$ has at least six vertices.
It follows that $H_2$ is a $5$-cycle. Then $H\cong J_1$.

If $\triangledown_F(H)$ is an $8$-edge-cut of $F$, then $|\triangledown_F(H_1)|+|\triangledown_F(H_2)|=10$.
Since $H$ has not $1$-degree vertex, both $H_1$ and $H_2$ have cycles. So $\triangledown_F(H_i)$ is a cyclic edge-cut of $F$, $i=1, 2$. By Lemma \ref{3-4-edge-cut}, $|\triangledown_F(H_i)|\geq 5$. So $\triangledown_F(H_i)$ is a cyclic $5$-edge-cut of $F$. Since $H$ has a unique perfect matching, $H_i$ is a $5$-cycle. So $H\cong J_2$.
\end{proof}

\section{Properties of fullerenes with $f(F)=3$}
We start with some notations concerning  a generalized patch $P$ of a fullerene introduced in
\cite{YangQ}. Label clockwise (counterclockwise) the half-edges of $P$ by $t_1, t_2, \ldots, t_k$, and set $a_i$ as the number of vertices from $t_i$ to $t_{i+1}$ in a clockwise (counterclockwise) scan of the boundary of $P$. Then the cyclic sequence $[a_1 a_2\ldots a_k]$ is called \emph{a distance-array} of $P$. Since a fullerene graph has only pentagonal and hexagonal faces, $1\leq a_i\leq6$.
For instance, a distance-array to describe the boundary of $J_1$ (see Fig. \ref{sub-patches}) could be $[132223]$.
Since we might start reading the boundary from different position and  in clockwise or counterclockwise direction, the boundary of a generalized patch may has more than one distance-arrays to describe it. However, we  easily   see that for the same boundary, the distance-arrays by  rotations and reversions are regarded as  equivalent.
\begin{figure}[htbp!]
\centering
\includegraphics[height=6.6cm]{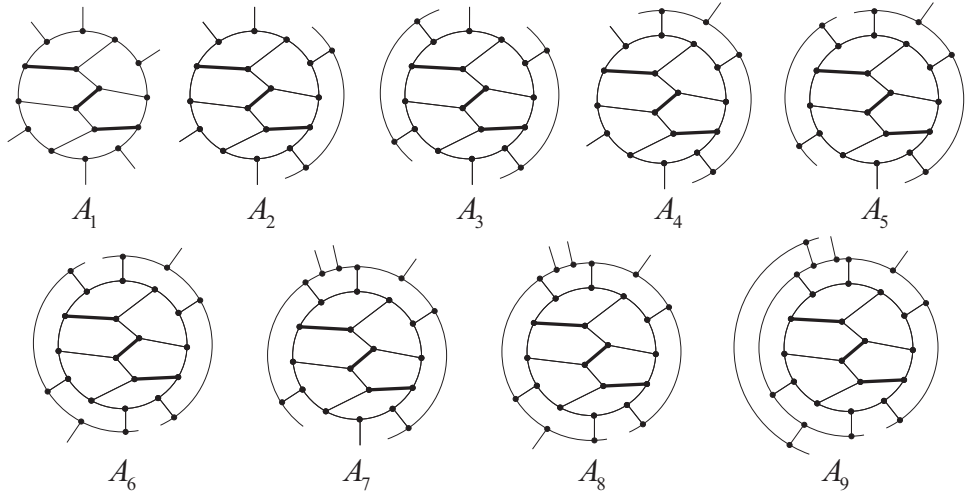}
\caption{\label{nine-caps}{\small Nine caps.}}
\end{figure}
We note that if $P$ has a distance-array $[a_1a_2\ldots a_k]$, it has at most $2k$ distinct distance-arrays describing it. Clearly, the smallest one in the numerical is uniquely determined.
In the following, we call such distance-array the \emph{min-distance-array} of $P$.
For example, for $P$ with a distance-array $[3516]$, there are eight distinct distance-arrays $[3516], [5163], [1635], [6351], [3615], [5361], [1536], [6153]$ to describe it, where $[1536]$ is the min-distance-array of $P$.

For nine caps in  Fig. \ref{nine-caps},  $A_1, A_3$ and $A_6$ have the  min-distance-array  $[234234]$,  $A_2, A_5$ and $A_8$ have $[233424]$, and $A_4, A_7$ and $A_9$ have $[233343]$. A nanotube fullerene of \emph{type $(4, 2)$} is comprised of two caps in Fig. \ref{nine-caps} with the same min-distance-array, and some layers of hexagons between them. The graph shown in Fig. \ref{example-type(4,2)} is a nanotube fullerene of type $(4, 2)$ with caps $A_1$ and $A_3$.
\begin{figure}[htbp!]
\centering
\includegraphics[height=4.2cm]{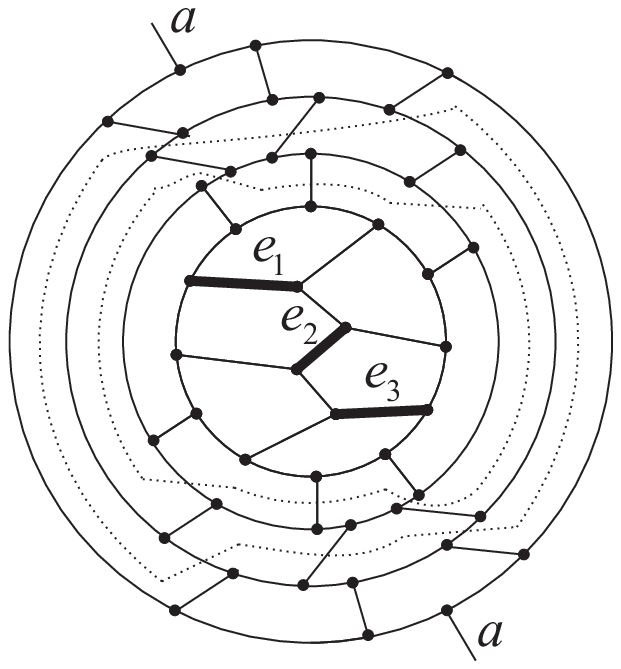}
\caption{\label{example-type(4,2)}{\small A nanotube fullerene of type $(4, 2)$ (each dashed cycle traverses a layer of hexagons).}}
\end{figure}

By Theorem $4$ and Corollary $5$ in \cite{FR}, the following lemma holds.
\begin{lem}\label{determine-nanotube-type(4,2)}
Let $F$ be a fullerene graph with a non-degenerate cyclic $6$-edge-cut $C$. If
the configurations of the six pentagons in one component of $F-C$ are as depicted in Fig. \emph{\ref{sub-patches}} $P^3$, then $F$ is a nanotube of type $(4, 2)$.
\end{lem}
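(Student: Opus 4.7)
The plan is to show that each component of $F-C$ consists of a cap from Fig.~\ref{nine-caps} together with some number of concentric hexagonal rings, and then read off that $F$ is a nanotube of type $(4,2)$ directly from the definition.

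Let $H$ denote the component of $F-C$ whose six pentagons are arranged as in $P^3$, and let $H'$ be the other component of $F-C$. Because $C$ is non-degenerate, $H'$ also contains exactly six pentagons. I would first observe that all six pentagons of $H$ already lie inside the sub-patch $P^3$, so every face of $H$ lying strictly between $P^3$ and $C$ must be a hexagon. The next step is to argue that this hexagonal region is an annulus tiled by $\ell\ge 0$ concentric rings of six hexagons each, so that $H$ is precisely one of the caps $A_1,\dots,A_9$ together with $\ell$ hexagonal layers. By inspecting $P^3$, its outer boundary is seen to match one of the three distance-arrays $[234234]$, $[233424]$, $[233343]$ that describe the boundaries of the nine caps.

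With $H$ identified as a cap plus layers, I would then invoke Theorem~4 and Corollary~5 of \cite{FR}, which characterise fullerenes admitting a non-degenerate cyclic $6$-edge-cut: the boundary distance-array on the $H'$ side of $C$ must agree with that on the $H$ side, forcing $H'$ to be another cap from Fig.~\ref{nine-caps} (with the same min-distance-array), again plus possibly more hexagonal rings. Since the two caps then share the same min-distance-array and are glued across a cylindrical stack of hexagons, $F$ matches the definition of a type $(4,2)$ nanotube.

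The main obstacle is the annulus-tiling step for $H$: verifying that the hexagonal region between $P^3$ and $C$ must be a strict concentric ring stack rather than some more exotic tiling, and that the boundary of each successive ring continues to carry the same distance-array. This is exactly where Theorem~4 of \cite{FR} does the real work; reconstructing it from scratch would amount to an inductive ring-by-ring analysis starting at the boundary of $P^3$ and propagating outward, which we invoke rather than redo.
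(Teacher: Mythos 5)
Your sketch is correct and follows essentially the same route as the paper, which itself offers no independent argument but simply derives the lemma from Theorem 4 and Corollary 5 of \cite{FR}; you unpack a little more of what those results deliver (the cap-plus-concentric-hexagonal-rings structure on each side of $C$ and the matching of boundary distance-arrays), but you correctly defer the hard annulus-tiling step to the same cited theorems. Nothing further is needed.
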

\begin{lem}\label{(4,2)-nanotube}
Let $F$ be a nanotube fullerene of type $(4, 2)$. Then $f(F)=3$ and $F$ has a forcing set $S$ of size $3$ such that $F[V(S)]$ is isomorphic to $P_6$ with the incident edge set as depicted in $L_3$ \emph{(}see Fig. \emph{\ref{seed-case1}}\emph{)}.
\end{lem}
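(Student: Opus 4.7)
The plan is straightforward in outline: Theorem 1.1 hands us the lower bound $f(F)\geq 3$ at once (since a nanotube of type $(4,2)$ is clearly not $F_{24}$), so all the work is in producing an explicit $3$-edge forcing set $S$ of the stated shape. Thus I would pick a perfect matching $M$ of $F$, exhibit three edges $S\subseteq M$ whose induced subgraph is the path $P_6$ drawn in $L_3$, and then show that no other perfect matching of $F$ contains $S$.

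First I would locate the seed. Because $F$ is of type $(4,2)$, its two caps are chosen from $A_1,\dots,A_9$ and share a common min-distance-array among $[234234]$, $[233424]$, $[233343]$. Inspecting Fig.~\ref{nine-caps}, each of the nine caps contains a natural $P_6$ with three alternate edges forming exactly the configuration $L_3$; I would place $S$ on that $P_6$ inside one of the caps, and take $M$ to be any perfect matching of $F$ containing $S$ (existence is easy from cap-by-cap construction and the fact that any single layer of hexagons extends a perfect matching of its bottom rim in at least one way).

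Second, I would run the forcing propagation. Every vertex of $F$ has degree $3$, so whenever two of the three edges at a vertex are known not to lie in $M$, the third is forced into $M$; dually, once an edge at a vertex is in $M$ the other two are forbidden. Starting from $S$ and using that (i) each pentagonal or hexagonal face contains at most three edges of $M$, and (ii) in a cubic graph a face of length $\ell$ alternates between matched and unmatched edges around it as soon as one is fixed, I would push the matching outward from $S$ first to cover the entire cap containing $S$, then layer by layer through the hexagonal cylinder, and finally across the opposite cap. The cylindrical portion is the cleanest: in a type $(4,2)$ nanotube each layer of hexagons has circumference $6$, and once the lower rim edges in $M$ are specified the upper rim is forced hexagon by hexagon. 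At the far cap, the same local argument finishes the propagation.

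The main obstacle is, as usual, the case analysis at the caps: one must verify that the three configurations of min-distance-arrays give rise to exactly the same forcing behavior from $L_3$, i.e.\ that for each of $A_1,\dots,A_9$ the seed $S$ actually forces the matching on that cap without branching, and that the induced matching on the boundary hexagonal cycle is the unique one consistent with the subsequent layers. I would organize this as a short check for each of the three min-distance-array classes, rather than all nine caps individually, using the symmetries already exploited in Fig.~\ref{nine-caps}. Once this is verified the forcing set $S$ of size $3$ is produced and, combined with Theorem 1.1, yields $f(F)=3$ and the claimed $L_3$ structure. \hfill $\square$
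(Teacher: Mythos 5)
Your proposal follows essentially the same route as the paper: the paper likewise takes $S$ to be the three designated (dark) edges inside one cap $A_i$ of Fig.~\ref{nine-caps} and then simply asserts that one ``can check'' that $S$ forces a perfect matching, the lower bound $f(F)\geq 3$ being supplied by Theorem 1.1 exactly as you say. Your propagation sketch merely spells out that check; the only blemish is your side remark that a face ``alternates between matched and unmatched edges'' once one edge is fixed, which is not literally true for faces of a cubic graph, but it is also not needed since the degree-$3$ vertex propagation rule you state in (i) carries the argument.
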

\begin{proof}
By the definition of a nanotube fullerene of type $(4, 2)$, $F$ has a cap $A_i$, $i\in\{1, \ldots, 9\}$ (see Fig. \ref{nine-caps}). Let $S$ be the set of the three dark edges in $A_i$ (see Fig. \ref{nine-caps}). Then we can check that $S$ is a forcing set of $F$. For example, $S:=\{e_1, e_2, e_3\}$ is a forcing set of the fullerene $F$ as shown in Fig. \ref{example-type(4,2)}.
\end{proof}

Let $G$ be a connected graph and $e$ a cut edge of $G$. The edge $e$ has one end $x$ in one component $G_1$ of $G-e$. If $G_1-x$ is empty or has a unique perfect matching, then we call $e$ a \emph{pendent edge} and $G_1$ a \emph{pendent blossom}, and say that $G_1$ is incident with $e$, and vice versa. The number of the adjacent vertices of $v$ in $G$ is called the \emph{degree} of $v$, denoted by $d_{G}(v)$.

Let $F$ be a fullerene and $S=\{e_1, e_2, e_3\}$ a forcing set of $F$. We define the following notations:

$F_0':= F[V(S)]$, $F_0'':=F- F_0'$;

If $F_i''$ has a $1$-degree vertex, then $F_{i+1}''$ is obtained from $F_i''$ by deleting this $1$-degree vertex and its adjacent vertex;

$F_{i+1}':=F- F_{i+1}''$;

$X_i$ denotes the set of edges in $F$ from $F_i'$ to $F_i''$.

Suppose that $F_k''$ ($k\geq0$) is the first such subgraph that has no $1$-degree vertices. In the following, we let $F'':=F_k''$ and $F':=F-F''$.
\begin{lem}\label{|X_i|}
For integer $k\geq1$, $|X_{i+1}|\leq |X_i|\leq 12$, $i=0, 1, \ldots, k-1$.
\end{lem}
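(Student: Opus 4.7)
The plan is to establish the two inequalities $|X_{i+1}|\le |X_i|$ and $|X_i|\le 12$ separately: first an a priori bound $|X_0|\le 12$, then a monotonicity step that passes it along the reduction. Together these give the full statement by induction on $i$.

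For the base bound $|X_0|\le 12$, the observation is that $V(F_0')=V(S)$ has at most six vertices (since $|S|=3$), so the sum of $F$-degrees of the vertices in $V(F_0')$ is at most $3\cdot 6=18$. The subgraph $F_0'=F[V(S)]$ contains at least the three edges of $S$, each contributing $2$ to that degree sum. Hence $|X_0|\le 18-2\cdot 3=12$.

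For the monotonicity, I would analyze directly what the transition from $F_i''$ to $F_{i+1}''$ does to the cut $X_i$. Let $v$ be the chosen $1$-degree vertex of $F_i''$, $u$ its unique neighbor in $F_i''$, and set $d:=d_{F_i''}(u)\in\{1,2,3\}$. Since $F$ is cubic, $v$ has exactly two neighbors in $F_i'$ and $u$ has exactly $3-d$ neighbors in $F_i'$; all $5-d$ of these edges lie in $X_i$ and become interior to $F_{i+1}'=F_i'\cup\{u,v\}$, thus leaving the cut. Conversely, the $d-1$ edges from $u$ to $V(F_i'')\setminus\{u,v\}=V(F_{i+1}'')$, which were interior to $F_i''$, now join $X_{i+1}$. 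Counting gives
\[
|X_{i+1}|=|X_i|-(5-d)+(d-1)=|X_i|+2d-6\le |X_i|,
\]
the last inequality because $d\le 3$. Combined with $|X_0|\le 12$, induction yields $|X_i|\le 12$ throughout.

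I do not expect a genuine obstacle: the lemma is purely structural and rests only on the cubicity of $F$ together with careful edge bookkeeping. The one place requiring care is the edge $uv$ itself, which is interior to $F_i''$ and becomes interior to $F_{i+1}'$, so it never appears in any $X_j$ and must not be counted on either side of the accounting. Neither planarity, the pentagon/hexagon face structure, nor the forcing property of $S$ plays any role in this particular lemma; those hypotheses will presumably be used later, when the authors exploit the bound $|X_i|\le 12$ to constrain the possible structure of $F''$.
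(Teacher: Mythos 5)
Your proof is correct and takes essentially the same route as the paper: a degree-count bound $|X_0|\le 12$ followed by edge bookkeeping across each deletion step (the paper writes the change as $-2+\delta_e$ with $\delta_e\in\{-2,0,2\}$ forced by parity and cubicity, which is exactly your $2d-6$ for $d\in\{1,2,3\}$). Your version is just a more explicit account of the same computation.
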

\begin{proof}
Since $F_0'=F[V(S)]$ and $S$ is a set of three independent edges, $|X_0|\leq 12$.

If $F_{i+1}''$ is obtained from $F_i''$ by deleting a $1$-degree vertex and its adjacent vertex, then $|X_{i+1}|=|X_i|-2 +\delta_e$. Since $F$ is a cubic graph and $|F_i''|$ and $|F_{i+1}''|$ are even, $\delta_e=-2, 0$, or $2$. So $|X_{i+1}|\leq |X_i|$.
\end{proof}

For two subgraphs $A$ and $B$ of a graph $G$, we denote by $E(A, B)$ the set of edges of
$G$ with one end in $A$ and the other end in $B$, and set $e(A, B)=|E(A, B)|$.

\begin{thm}\label{Operation3}
If $F''=\emptyset$, then $F=F'$. If $F''\neq\emptyset$, then $|\triangledown_F(F'')|=8, 10,$ or $12$. Moreover, $F''\cong J_2$ \emph{(}see Fig. \emph{\ref{sub-patches}}\emph{)} if $|\triangledown_F(F'')|=8$.
Suppose that $F$ is not a nanotube fullerene of type $(4, 2)$. Then
$F''\cong PP$ \emph{(}see Fig. \emph{\ref{sub-patches})} or consists of a pentagon and a patch $H\in\mathcal{D}$ connecting by an edge if $|\triangledown_F(F'')|=10$,
and $F''$ consists of two patches $H_1\in\mathcal{D}$ and $H_2\in\mathcal{D}$ connecting by an edge if $F''$ has no pendent pentagons and $|\triangledown_F(F'')|=12$.
\end{thm}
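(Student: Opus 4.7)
The plan is to exploit three structural facts about $F''$: it is an induced subgraph of $F$, has no vertex of degree one, and inherits a unique perfect matching $M''$ from the forcing property of $S$ (any other perfect matching of $F''$ would extend $S$ to a second perfect matching of $F$, contradicting that $S$ is a forcing set). First I would invoke Theorem~\ref{unique-pm} to produce a bridge $e = xy \in M''$ of $F''$, so that $F'' - e$ splits into two components $H_1$ (containing $x$) and $H_2$ (containing $y$), each of odd order since $e$ matches the unique cross-vertices. A cubic-parity count then forces each $|\triangledown_F(H_i)|$ to be odd, and inclusion-exclusion gives $|\triangledown_F(H_1)| + |\triangledown_F(H_2)| = |\triangledown_F(F'')| + 2$.

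Next I would pin down $|\triangledown_F(F'')| \in \{8,10,12\}$. Cubic parity forces $|\triangledown_F(F'')|$ to be even, and Lemma~\ref{|X_i|} gives the upper bound $12$. For the lower bound, each $|\triangledown_F(H_i)| \geq 3$ by $3$-edge-connectivity of $F$; equality would make $\triangledown_F(H_i)$ a trivial $3$-edge-cut by Lemma~\ref{3-4-edge-cut}, so $H_i$ would be a single vertex, turning $x$ or $y$ into a $1$-degree vertex of $F''$ and contradicting the construction. The even values $2,4,6$ each force some $|\triangledown_F(H_i)| = 3$, so $|\triangledown_F(F'')| \geq 8$.

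The $8$-edge-cut case follows at once from Proposition~\ref{cyclic-6-edge-cut}. When $|\triangledown_F(F'')|=10$, the pair $(|\triangledown_F(H_1)|,|\triangledown_F(H_2)|)$ must be $(5,7)$ after relabeling. On the $5$-side, $H_1$ necessarily contains a cycle (any tree satisfying the degree constraints collapses to a single vertex), so $\triangledown_F(H_1)$ is a cyclic $5$-edge-cut; by Lemma~\ref{3-4-edge-cut} it is either trivial, making $H_1$ a pentagon, or non-trivial, forcing $F \cong G^k$, in which case the alternative $F'' \cong PP$ is extracted from the pentacap structure of Lemma~\ref{component-of-non-trivial-cyclic-5-edge-cut}. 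On the $7$-side, $H_2$ is a generalized patch with at most five pentagons (its complement already carries at least one), the $7$-cut is degenerate (using that $F$ is not a nanotube of type $(4,2)$ together with Lemma~\ref{determine-nanotube-type(4,2)}), and $H_2 - y$ inherits a unique perfect matching from $M''$; the second half of Lemma~\ref{chooseD} then yields $H_2 \in \mathcal{D}$ with $y$ the distinguished white $2$-degree vertex.

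For $|\triangledown_F(F'')|=12$ the possible splits are $(5,9)$ and $(7,7)$. A $(5,9)$ split either places a pentagon on the $5$-side, excluded by the hypothesis that $F''$ has no pendent pentagons, or is a non-trivial cyclic $5$-cut forcing $F \cong G^k$, which the nanotube-$(4,2)$ exclusion and the pendent-pentagon hypothesis together rule out. The $(7,7)$ split then yields two degenerate cyclic $7$-edge-cuts (again via Lemma~\ref{determine-nanotube-type(4,2)}) with $H_1 - x$ and $H_2 - y$ each having a unique perfect matching, so Lemma~\ref{chooseD} applied symmetrically gives $H_1,H_2\in\mathcal{D}$. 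The main obstacle is the careful handling of the non-trivial cyclic $5$-edge-cut cases: identifying exactly when $F\cong G^k$ is forced, mapping the resulting pentacap structure onto the patch $PP$ or ruling it out under the nanotube hypothesis, and verifying throughout that each relevant component is a $2$-connected generalized patch of the form demanded by Lemma~\ref{chooseD}.
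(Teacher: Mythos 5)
Your skeleton matches the paper's: bridge decomposition of $F''$ via Theorem~\ref{unique-pm}, parity and edge-connectivity bounds giving $|\triangledown_F(F'')|\in\{8,10,12\}$, Proposition~\ref{cyclic-6-edge-cut} for the $8$-case, and Lemma~\ref{chooseD} for the $10$- and $12$-cases. But there is a major gap at exactly the point where the paper spends most of its effort. To apply Lemma~\ref{chooseD} to the component $H$ with $|\triangledown_F(H)|=7$ you must show $H$ has \emph{at most five} pentagons; your parenthetical ``its complement already carries at least one'' only bounds the count by eleven. The paper's Subcases~1.1 and~1.2 are a multi-page contradiction argument assuming $H$ has at least six pentagons: they peel off boundary vertices step by step using Proposition~\ref{cyclic-6-edge-cut} and Lemma~\ref{5-pentagons-cylic-6-edge-cut}, derive the forbidden configurations $G_1$ and $R_1,\dots,R_5$, and in the final surviving case show via Lemma~\ref{determine-nanotube-type(4,2)} that $F$ would be a nanotube of type $(4,2)$ --- which is precisely why that hypothesis appears in the statement. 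Your appeal to Lemma~\ref{determine-nanotube-type(4,2)} to conclude ``the $7$-cut is degenerate'' does not work directly: that lemma concerns non-degenerate cyclic $6$-edge-cuts with a specific pentagon configuration, and it only enters after the long reduction. Without this pentagon bound the proof does not close.

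Two smaller errors. First, you misattribute the origin of $PP$: in the $10$-case the non-trivial cyclic $5$-edge-cut alternative is \emph{ruled out} (by Lemma~\ref{component-of-non-trivial-cyclic-5-edge-cut} the $5$-side would be a pentacap with layers of hexagons, and deleting its single attachment vertex leaves a $2$-connected graph with at least two perfect matchings, contradicting uniqueness); $PP$ instead arises on the $7$-side when the bridge endpoint $v_1$ has degree $1$ in $H_1$, so that $H_1-v_1\cong J_1$ by Proposition~\ref{cyclic-6-edge-cut}. Second, excluding nanotubes of type $(4,2)$ does not exclude $F\cong G^k$ --- $G^k$ is the $(5,0)$-type tube, a different family --- so in the $12$-case the $(5,9)$ split's non-trivial alternative must again be killed by the uniqueness of the perfect matching, not by the type-$(4,2)$ hypothesis. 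The paper's route for the $(5,9)$ split is cleaner: a cyclic $5$-edge-cut side would be a pendent pentagon (trivial case) or yield two perfect matchings (non-trivial case), so the no-pendent-pentagon hypothesis forces both sides to be $7$-cuts.
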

\begin{proof}
Recall that $S=\{e_1, e_2, e_3\}$ is a forcing set of $F$.
We denote by $M$ the unique perfect matching of $F$ with $S\subseteq M$.
We suppose that $F''\neq\emptyset$.
Clearly, $F''$ is an induced plane subgraph of $F$ and it can be extended to $F$ on the plane.
By Lemma \ref{|X_i|}, $|\triangledown_F(F'')|\leq 12$. Since $F$ is $3$-regular and the order of $F''$ is even, $|\triangledown_F(F'')|$ is even.
Since $F''$ has no $1$-degree vertices and has a unique perfect matching, $F''$ is connected by Lemma \ref{3-4-edge-cut} and Proposition \ref{cyclic-6-edge-cut}.
Since $F''$ has a unique perfect matching $M\cap E(F'')$, by Theorem \ref{unique-pm}, $F''$ has a cut-edge $e''=v_1v_2\in M\cap E(F'')$ which separates $F''$ into two components $H_1$ and $H_2$ with $v_i\in V(H_i)$, $i=1, 2$.
Clearly, the order of $H_i$ is odd. So $|\triangledown_F(H_i)|$ is odd.
Since $F''$ has not $1$-degree vertices, both $\triangledown_F(H_1)=E(H_1, F')\cup \{e''\}$ and $\triangledown_F(H_2)=E(H_2, F')\cup \{e''\}$ are cyclic edge-cut of $F$.
Since $c\lambda(F)=5$, we suppose that  $|\triangledown_F(H_1)|\geq|\triangledown_F(H_2)|\geq5$.
So $|\triangledown_F(F'')|=|\triangledown_F(H_1)|+|\triangledown_F(H_2)|-2\geq8$.
Hence $|\triangledown_F(F'')|=8, 10,$ or $12$.
If $|\triangledown_F(F'')|=8$, then $F''\cong J_2$ by Proposition \ref{cyclic-6-edge-cut}. Next, we suppose that $|\triangledown_F(F'')|=10$ or $12$.

\emph{Case $1$.} $|\triangledown_F(F'')|=10$.

Since $|\triangledown_F(H_1)|+|\triangledown_F(H_2)|=|\triangledown_F(F'')|+2=12$, $|\triangledown_F(H_2)|=5$ and $|\triangledown_F(H_1)|=7$.
By Lemma \ref{component-of-non-trivial-cyclic-5-edge-cut}, $\triangledown_F(H_2)$ is a trivial cyclic 5-edge-cut of $F$ since $H_2-v_2$ has a unique perfect matching. So $H_2$ is isomorphic to a $5$-cycle.
Clearly, $H_1$ has not $1$-degree vertex or has a unique $1$-degree vertex $v_1$. For case $d_{H_1}(v_1)=1$, $\triangledown_F(H_1-v_1)$ is a cyclic $6$-edge-cut of $F$. Since $H_1-v_1$ has a unique perfect matching, $H_1-v_1\cong J_1$ by Proposition \ref{cyclic-6-edge-cut}. So $F''\cong PP$. If $d_{H_1}(v_1)=2$, then $H_1$ is $2$-connected, otherwise, $F$ has a cyclic edge-cut of size at most four, a contradiction. Moreover, any inner vertex of $H_1$ has degree $3$. So $H_1$ is a patch of $F$, and the boundary of $H_1$ is a cycle, denoted by $C$.
We claim that $H_1$ has at most five pentagons (then $H_1\in\mathcal{D}$ by Lemma \ref{chooseD}). By the contrary, we suppose that $H_1$ has at least six pentagons.

Since $|\triangledown_F(H_1)|=7$, by Lemma \ref{formula}, the length of $C$ is at least $14$.
Let $C=v_1x_1x_2\cdots$\\
$x_{k-1}x_{k}v_1$, $k\geq13$. We note that there are exactly seven $2$-degree vertices in $H_1$, all of which belong to $C$. Clearly, $\triangledown_F(H_1-v_1)$ is an $8$-edge-cut of $F$. If $H_1-v_1$ has not $1$-degree vertex, then $H_1-v_1\cong J_2$ by Proposition \ref{cyclic-6-edge-cut} since $H_1-v_1$ has a unique perfect matching. This contradicts that $H_1-v_1$ has at least five pentagons. So $H_1-v_1$ has a $1$-degree vertex. Since $d_{H_1}(v_1)=2$ and $H_1$ is $2$-connected, any $1$-degree vertex of $H_1-v_1$ is adjacent to $v_1$ in $H_1$. So only $x_1$ and $x_{k}$ may be $1$-degree vertex in $H_1-v_1$. We consider the following two cases.

\emph{Subcase $1.1.$} $d_{H_1-v_1}(x_1)=1$ and $d_{H_1-v_1}(x_{k})\neq1$.

In this case, $d_{H_1}(x_1)=2$ and $d_{H_1}(x_{k})=3$. So $x_1x_2\in M$. Clearly, $H_1^1:=H_1-\{v_1, x_1, x_2\}$ has a unique perfect matching, and has at least 5 pentagons if $d_{H_1}(x_2)=2$, at least 4 pentagons if $d_{H_1}(x_2)=3$.
If $d_{H_1}(x_2)=2$, then $\triangledown_F(H_1^1)$ is a cyclic $6$-edge-cut of $F$. By Proposition \ref{cyclic-6-edge-cut}, $H_1^1\cong J_1$, a contradiction. Hence $d_{H_1}(x_2)=3$ and $\triangledown_F(H_1^1)$ is an $8$-edge-cut of $F$. By  Proposition \ref{cyclic-6-edge-cut}, $H_1^1$ has a $1$-degree vertex. We can check that any inner vertex of $H_1$ can not be $1$-degree vertex in $H_1^1$, and $d_{H_1^1}(x_{k})\neq1$. So $d_{H_1^1}(x_{3})=1$. That is, $d_{H_1}(x_{3})=2$ and $x_3x_4\in M$. For $H_1^2:=H_1-\{v_1, x_1, x_2, x_3, x_4\}$, we can similarly show that $d_{H_1}(x_{4})=3$, $d_{H_1}(x_{5})=2$ and $x_5x_6\in M$. Set $H_1^3:=H_1-\{v_1, x_1, \ldots, x_6\}$. $H_1^3$ has a unique perfect matching, and has at least 3 pentagons if $d_{H_1}(x_6)=2$, at least 2 pentagons if $d_{H_1}(x_6)=3$. As the above discussion, $d_{H_1}(x_6)=3$. If $H_1^3$ has no 1-degree vertex, then $H_1^3\cong J_2$ by Proposition \ref{cyclic-6-edge-cut}. So $H_1\cong G_1$ (see Fig. \ref{Claim2-caes1} ($a$)).
\begin{figure}[htbp!]
\centering
\includegraphics[height=4.0cm]{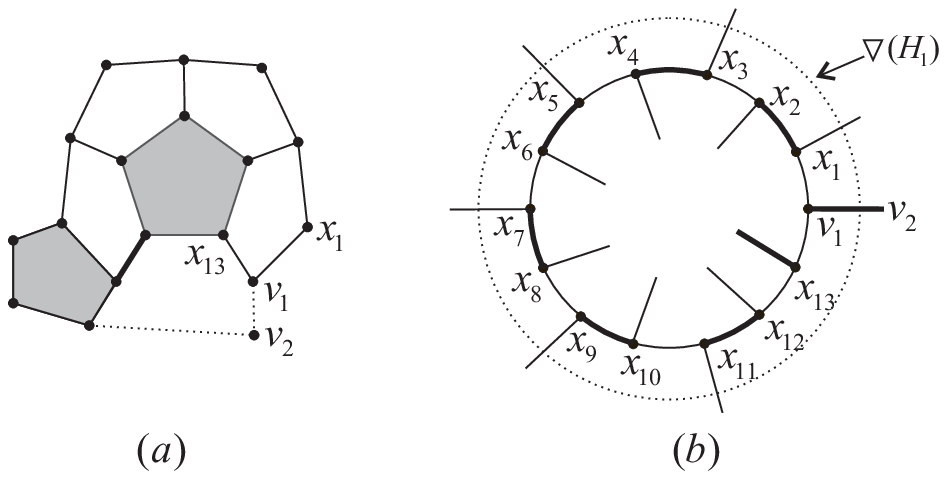}
\caption{\label{Claim2-caes1}{\small (a) A patch $G_1$ (not include $v_2$); (b) The edge set traversed by dashed line is $\triangledown_F(H_1)$.}}
\end{figure}
Since the length of any facial-cycle of $F$ is five or six, $v_2$ is adjacent to two vertices in $H_1$, this contradicts that there is exactly one edge connecting $H_1$ and $H_2$. So $H_1^3$ has a $1$-degree vertex. We can check that only $x_7$ can be a $1$-degree vertex in $H_1^3$. Hence $d_{H_1}(x_7)=2$ and $x_7x_8\in M$.
For $H_1^4:=H_1-\{v_1, x_1, \ldots, x_8\}$, as the discussion of $H_1^3$, we have $d_{H_1}(x_8)=3, d_{H_1}(x_9)=2$ and $x_9x_{10}\in M$. Since $|\triangledown_F(H_1)|=7$, $e(H_1, H_2)=1$ and any facial cycle of $F$ is of length $5$ or $6$, $d_{H_1}(x_{10})=3$.
Set $H_1^5:=H_1-\{v_1, x_1, \ldots, x_{10}\}$. So $\triangledown_F(H_1^5)$ is an $8$-edge-cut of $F$ and $H_1^5$ has a $1$-degree vertex. We can check that $d_{H_1^5}(x_{11})=1$. Hence the incident vertex set of $\triangledown_F(H_1)$ in $H_1$ is $\{v_1, x_1, x_3, x_5, x_7, x_9, x_{11}\}$ (see Fig. \ref{Claim2-caes1} (b)).
So $d_{H_1}(x_{i})=3$, $i=12, \ldots, k$.
Since $H_2$ is a pentagon and the length of any facial cycle of $F$ is five or six, $k=13$ and $x_{11}$ has an adjacent vertex in $H_2$. So $e(H_1, H_2)\geq2$. This contradicts that there is only one edge between $H_1$ and $H_2$.
So Subcase $1.1$ can not happen.

\emph{Subcase $1.2.$} $d_{H_1-v_1}(x_1)=1$ and $d_{H_1-v_1}(x_{k})=1$.
\begin{figure}[htbp!]
\centering
\includegraphics[height=4.0cm]{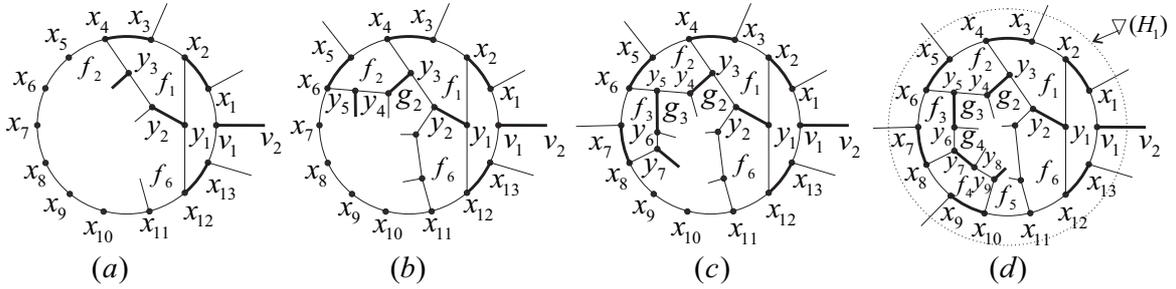}
\caption{\label{Case2}{\small Illustration for the proof of Subcase $1.2$ (I).}}
\end{figure}

Then $d_{H_1}(x_1)=2$ and $d_{H_1}(x_{k})=2$, $x_1x_2\in M$ and $x_{k}x_{k-1}\in M$.
Let $H_{1,1}=H_1-\{v_1, x_1, x_{k}\}$ and $\bar{H}_{1,1}=F-H_{1,1}$. We note that $\triangledown_F(H_{1,1})$ is a cyclic $6$-edge-cut of $F$ and $|\bar{H}_{1,1}|\geq|\{v_1, x_1, x_k\}\cup V(H_2)\cup V(S)|=3+5+2\times3=14$.

\emph{Claim 1:} $\triangledown_F(H_{1,1})$ is a non-degenerate cyclic $6$-edge-cut of $F$.

By the contrary, we suppose that $\triangledown_F(H_{1,1})$ is degenerate. So $H_{1,1}$ or $\bar{H}_{1,1}$ has at most five pentagons.
If $H_{1,1}$ has at most five pentagons, then it has exactly five pentagons since $H_1$ has at least six pentagons.
So $x_2$ and $x_{k-1}$ are adjacent in $H_1$. By Lemma \ref{5-pentagons-cylic-6-edge-cut}, $H_{1,1}\cong P^1$ or $P^2$.
We notice that vertices $x_2$ and $x_{k-1}$ are two $2$-degree adjacent vertices in $H_{1,1}$.
It is easy to check that $H_{1,1}-\{x_2, x_{k-1}\}$ is $2$-connected and has at least two perfect matchings.
So $H_1-v_1$ has at least two perfect matchings, a contradiction.
If $\bar{H}_{1,1}$ has at most five pentagons, by Lemma \ref{5-pentagons-cylic-6-edge-cut}, $\bar{H}_{1,1}\cong P^1$ or $P^2$ since $|\bar{H}_{1,1}|\geq14$. We notice that both $x_1$ and $x_k$ are $2$-degree vertices in $\bar{H}_{1,1}$ which are connected by a $2$-path $x_1v_1x_k$ on the boundary of $\bar{H}_{1,1}$. If $\bar{H}_{1,1}\cong P^2$, then for any case of $\{x_1, x_k\}$, $\bar{H}_{1,1}-(\{x_1, v_1, x_k\}\cup V(H_2))$ is a path $P_8$ of order $8$. Clearly, $S\subset E(P_8)$ can not force a perfect matching of $P_8$ in the initiative graph $F$, a contradiction.
For the case $\bar{H}_{1,1}\cong P^1$, we can show that $\bar{H}_{1,1}-(\{x_1, v_1, x_k\}\cup V(H_2))$ has not perfect matching, a contradiction.
Hence $\triangledown_F(H_{1,1})$ is a non-degenerate cyclic $6$-edge-cut of $F$.

By Lemma \ref{formula}, the boundary $C$ of $H_1$ is a $14$-cycle, $k=13$. By Claim 1, $x_2x_{12}\notin E(F)$.
Further, $d_{H_1}(x_2)=3$ and $d_{H_1}(x_{12})=3$, otherwise, $\triangledown_F(H_1-\{v_1, x_1, x_2, x_{13}\})$ or $\triangledown_F(H_1-\{v_1, x_1, x_{13}, x_{12}\})$ is a non-degenerate cyclic $5$-edge-cut of $F$, then $H_1-v_1$ has at least two perfect matchings, a contradiction.
So there is a vertex $y_1\in V(H_{1,1})$ lying in the interior area of $C$  such that $x_2y_1$, $x_{12}y_1\in E(H_{1,1})$.
Since the cyclic edge-connectivity of $F$ is five and each $5$-cycle (resp. $6$-cycle) is a facial cycle, $y_1$ is not adjacent to $x_3, x_4, \ldots, x_{11}$. So there is $y_2\in V(H_1)\setminus\{v_1, x_1, \ldots, x_{13}\}$ such that $y_1y_2\in M$.
We note that $H_{1, 1}$ is a patch of $F$ with $\triangledown_F(H_{1, 1})$ being a non-degenerate cyclic $6$-edge-cut, and $x_2y_1x_{12}$ is a $2$-path on the boundary of $H_{1, 1}$, $H_{1, 1}-\{x_2, x_{12}\}$ has a unique perfect matching $M\cap E(H_{1, 1}-\{x_2, x_{12}\})$.

\emph{Claim 2:} Exactly one of $x_3$ and $x_{11}$ is $2$-degree vertex in $H_1$. Suppose that $d_{H_1}(x_3)=2$, then $d_{H_1}(x_{4})=3$ and $y_1$, $x_2, x_3, x_4$ belong to a hexagonal face $f_1$ of $H_1$ (see Fig. \ref{Case2} (a)).

Clearly, $H_{1, 2}:=H_1-\{v_1, x_1, x_2, x_{12}, x_{13}, y_1, y_2\}$  has a unique perfect matching.
Since $\triangledown_F(H_{1, 2})$ is an $8$-edge-cut of $F$ and $H_{1, 2}$ has at least three pentagons, $H_{1, 2}$ has a $1$-degree vertex by Proposition \ref{cyclic-6-edge-cut}. We can check that only $x_3$ and $x_{11}$ may be $1$-degree in $H_{1, 2}$.

If both $x_3$ and $x_{11}$ are $1$-degree in $H_{1, 2}$, then
$x_3x_4, x_{11}x_{10}\in M$. Let $H_{1,1}':=H_1-\{v_1, x_1, x_2, x_3, x_{11}, x_{12}, x_{13}, y_1\}$. Since
$\triangledown_F(H_{1,1}')$ is a non-trivial cyclic $5$-edge-cut of $F$,  by Lemma \ref{component-of-non-trivial-cyclic-5-edge-cut}, $H_{1,1}'$ is a patch that consists of a pentacap and $i\geq0$ layers of hexagons around it. We notice that $x_4, y_2, x_{10}$ are three consecutive $2$-degree vertices on the boundary of $H_{1,1}'$. It is easy to check that $H_{1,1}'-\{x_4, y_2, x_{10}\}$ has at least two perfect matchings. So $H_1-v_1$ has at least two perfect matchings, a contradiction.
So exactly one of $x_3$ and $x_{11}$ is  $2$-degree in $H_1$.

We suppose that $d_{H_1}(x_{11})=3$, $d_{H_1}(x_3)=2$.  Then $x_3x_4\in M$.
Let $H_{1,1}'':=H_1-\{v_1, x_1, x_2, x_3, x_4, x_{12}, x_{13}, y_1\}$.
If $d_{H_1}(x_4)=2$, then $\triangledown_F(H_{1,1}'')$ is a non-degenerate cyclic $5$-edge-cut. So $H_{1,1}''-y_2$ has at least two perfect matchings. It follows that $H_1-v_1$ has at least two perfect matchings, a contradiction. So $d_{H_1}(x_4)=3$.

Clearly $H_{1, 2}':=H_1-\{v_1, x_1, x_2, x_3, x_4, x_{12}, x_{13}, y_1, y_2\}$ has a unique perfect matching $M\cap E(H_{1, 2}')$. The two vertices $y_2$ and $x_4$ are not adjacent in $F$, otherwise, $\triangledown_F(H_{1, 2}')$ is a cyclic $6$-edge-cut of $F$, then $H_{1, 2}\cong J_1$ by Proposition \ref{cyclic-6-edge-cut}, this contradicts that $H_{1, 2}'$ has at least three pentagons.
So $f_1$ is a hexagon (see Fig. \ref{Case2}($a$)). Then the Claim 2 is done.

Let $H_{1, 3}:=H_1-\{v_1, x_1, x_2, x_3, x_{12}, x_{13}, y_1\}$. We notice that $\triangledown_F(H_{1, 3})$ is a cyclic $6$-edge-cut of $F$ and $M\cap E(H_{1, 3})$ is not a perfect matching of $H_{1, 3}$. We can check that $H_{1, 3}$ has not $1$-degree vertex. By Lemma \ref{3-4-edge-cut}, $H_{1, 3}$ is $2$-connected. So $H_{1, 3}$ is a patch of $F$.

If $f_6$ (see Fig. \ref{Case2}($a$)) is a pentagon, then $\triangledown_F(H_{1, 3})$ is degenerate and $H_{1, 3}$ has exactly five pentagons. So $H_{1, 3}\cong P^1$ or $P^2$ by Lemma \ref{5-pentagons-cylic-6-edge-cut}.
We notice that $x_4, y_2, x_{11}$ are three $2$-degree vertices in $H_{1, 3}$, and $x_4, y_2$ are connected by a $2$-path, $y_2, x_{11}$ are connected by a $2$-path on the boundary of $H_{1, 3}$.
Since $H_{1, 3}-\{x_4, y_2\}$ has a unique perfect matching $M\cap E(H_{1, 3}-\{x_4, y_2\})$, $H_1\cong R_1$ or $R_2$ (see Fig. \ref{R_i}).
\begin{figure}[htbp!]
\centering
\includegraphics[height=4cm]{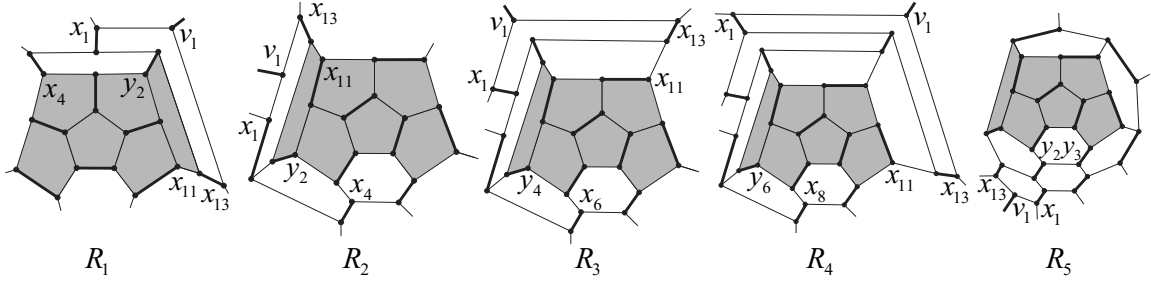}
\caption{\label{R_i}{\small Illustration for the proof of Subcase $1.2$ (II).}}
\end{figure}

If $f_6$ (see Fig. \ref{Case2}($a$)) is a hexagon, then $\triangledown_F(H_{1, 3})$ is non-degenerate.
We can check that $H_{1, 3}$ is a patch of $F$, $x_4y_3y_2$ is a $2$-path on the boundary of $H_{1, 3}$ and $H_{1, 3}-\{x_4, y_2\}$ has a unique perfect matching $M\cap E(H_{1, 3}-\{x_4, y_2\})$.
As the proof of Claim 2, we obtain that $d_{H_1}(x_5)=2$ and $d_{H_1}(x_6)=3$, $f_2$ is a hexagon (see Fig. \ref{Case2}($b$)). Let $H_{1, 4}:=H_1-\{v_1, x_1, x_2, x_3, x_4, x_5, x_{12}, x_{13}, y_1, y_2, y_3\}$. Clearly, $\triangledown_F(H_{1, 4})$ is a cyclic $6$-edge-cut of $F$. As the proof of $H_{1, 3}$, we know that $H_{1, 4}$ is a patch of $F$.
If $g_2$ is a pentagon, then $\triangledown_F(H_{1, 4})$ is degenerate and $H_{1, 4}$ has five pentagons. So $H_{1, 4}\cong P^1$ or $P^2$.
We note that the distance-array of $H_{1, 4}$ has three successive $3$ and the first $3$ corresponds to $x_6x_5$ and $y_4y_3$. Since $H_{1, 4}-\{x_6, y_4\}$ has a unique perfect matching, $H_{1, 4}\cong P^2$. So $H_1\cong R_3$.
If $g_2$ is a hexagon, then $\triangledown_F(H_{1, 4})$ is non-degenerate. We note that $x_6y_5y_4$ is a $2$-path on the boundary of $H_{1, 4}$ and $H_{1, 4}-\{x_6, y_4\}$ has a unique perfect matching $M\cap E(H_{1, 4}-\{x_6, y_4\})$.
As the proof of Claim 2, we have $d_{H_1}(x_7)=2$ and $d_{H_1}(x_8)=3$, $f_3$ is a hexagon (see Fig. \ref{Case2}($c$)).
We can check that $H_{1, 5}:=H_1-\{v_1, x_1, x_2, \ldots, x_7, x_{12}, x_{13}, y_1, y_2, \ldots, y_5\}$ is a patch of $F$ and $\triangledown_F(H_{1, 5})$ is a cyclic $6$-edge-cut. If $g_3$ is a pentagon, then
$\triangledown_F(H_{1, 5})$ is degenerate and $H_{1, 5}$ has five pentagons.
By Lemma \ref{5-pentagons-cylic-6-edge-cut}, $H_{1, 5}\cong P^2$ since the distance-array of $H_{1, 5}$ has four consecutive $3$. So $H_1\cong R_4$ since $H_{1, 5}-\{x_8, y_6\}$ has a unique perfect matching. If $g_3$ is a hexagon, then $\triangledown_F(H_{1, 5})$ is non-degenerate, as the above discussion, we know that $d_{H_1}(x_9)=2$, $d_{H_1}(x_{10})=3$ and $f_4$ is a hexagon (see Fig. \ref{Case2}(d)). Then the incident vertex set of the seven edges in $\triangledown_F(H_1)$ is $\{v_1, x_1, x_3, x_5, x_7, x_9, x_{13}\}$(see Fig. \ref{Case2}($d$)).
Set $H_{1,6}:=H_1-\{y_1\}\cup V(C)$.
If $f_5$ is a pentagon, then $\triangledown_F(H_{1, 6})$ is a degenerate cyclic $6$-edge-cut of $F$ such that $H_{1, 6}$ has five pentagons. So $H_{1,6}\cong P^2$ since the distance array of $H_{1, 6}$ has five consecutive $3$.
We notice that $y_2$ and $y_3$ are two adjacent $2$-degree vertices on the boundary of $H_{1, 6}$. So $H_1\cong R_5$ (see Fig. \ref{R_i}).
If $f_5$ is a hexagon, then $\triangledown_F(H_{1, 6})$ is a non-degenerate cyclic $6$-edge-cut of $F$ and has the same distance-array as the patch $H_{1, 1}$. In an inductive way, we can show that
the configurations of the six pentagons in the component $H_{1, 1}$ are as depicted in Fig. \ref{sub-patches} $P^3$.
By Lemma \ref{determine-nanotube-type(4,2)}, $F$ is a nanotube fullerene of type $(4, 2)$, a contradiction.
So Subcase 1.2 can not happen.

\emph{Case $2$.} $|\triangledown_F(F'')|=12$ and $F''$ has no pendent pentagons.

Since $F''$ has no pendent pentagons, both $\triangledown_F(H_1)$ and $\triangledown_F(H_2)$ are not cyclic $5$-edge-cut of $F$, otherwise, $H_1$ or $H_2$ is a pendent pentagon of $F''$ since $F''$ has a unique perfect matching.
So $|\triangledown_F(H_i)|\geq7$, $i=1, 2$. This implies that $e(H_1, F')\geq 6$ and $e(H_2, F')\geq 6$. Hence $e(H_1, F')+e(H_2, F')\geq 12$. Since $|\triangledown_F(F'')|=e(H_1, F')+e(H_2, F')=12$, both $\triangledown_F(H_1)$ and $\triangledown_F(H_2)$ are cyclic $7$-edge-cuts of $F$.

Since $F''$ is connected, $H_1$ is connected. We claim that $H_1$ has not $1$-degree vertex.
If not, $H_1$ has a $1$-degree vertex. We note that only $v_1$ can be a $1$-degree vertex in $H_1$ since $F''$ has not $1$-degree vertices. So $d_{H_1}(v_1)=1$ and $\triangledown_F(H_1-v_1)$ is a cyclic $6$-edge-cut of $F$. Since $H_1-v_1$ has a unique perfect matching $M\cap E(H_1-v_1)$, $H_1-v_1\cong J_1$ by Proposition \ref{cyclic-6-edge-cut}. So $F''$ has a pendant pentagon, a contradiction. It means that $H_1$ has not $1$-degree vertex. Since $c\lambda(F)=5$ and $|\triangledown_F(H_1)|=7$, $H_1$ is $2$-connected and any inner vertex of $H_1$ has degree $3$. So $H_1$ is a patch of $F$. Similarly, $H_2$ is also a patch of $F$.

Obviously, $F$ has exactly twelve pentagons. We claim that $H_i$ has at most five pentagons, $i=1, 2$.  If not, we suppose that $H_1$ has at least six pentagons. As the proof of Subcases 1.1 and 1.2, we can also obtain a contradiction.
So $H_1$ has at most five pentagons. Similarly, $H_2$ has at most five pentagons. Since both $H_1-v_1$ and $H_2-v_2$ have a unique perfect matching, $H_1, H_2\in\mathcal{D}$ by Lemma \ref{chooseD}.
\end{proof}

\begin{cor}\label{terminal-gene-patch}
In the condition of Theorem \emph{\ref{Operation3}},
$F''\cong PP$ $($see Fig. \emph{\ref{sub-patches}}$)$ or $F''\cong T_1, T_2, \ldots T_{16}$ $($see Fig. \emph{\ref{terminal10}}$)$ if $|\triangledown_F(F'')|=10$.
$F''\cong T_{17}, T_{18}, \ldots T_{180}$ $($see Fig. \emph{\ref{terminal12I}}, \emph{\ref{terminal12II}} and \emph{\ref{terminal12III}}$)$ if $|\triangledown_F(F'')|=12$ and $F''$ has no pendent pentagons.
\end{cor}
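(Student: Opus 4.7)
The plan is to derive the corollary as an enumeration consequence of Theorem \ref{Operation3} and Lemma \ref{chooseD}. First I would dispatch the case $|\triangledown_F(F'')|=10$. Theorem \ref{Operation3} already tells us that either $F''\cong PP$, or $F''$ is obtained by joining a pentagon to some patch $H\in\mathcal{D}$ via a single cut edge $e''=v_1v_2$, with $v_2$ a vertex of the pentagon and $v_1\in V(H)$. By the dihedral symmetry of the pentagon any of its vertices can play the role of $v_2$, and by Lemma \ref{chooseD} the vertex $v_1$ is forced to be one of the white vertices marked on $H$ in Fig.\ \ref{patches}. Thus each non-$PP$ isomorphism type of $F''$ is encoded by a pair $(H,v_1)$ taken modulo the plane automorphism group of $H$. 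I would list, patch by patch, the orbits of white vertices across the eleven members of $\mathcal{D}$; totalling them should yield the $16$ configurations $T_1,\ldots,T_{16}$ displayed in Fig.\ \ref{terminal10}.

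Next I would treat $|\triangledown_F(F'')|=12$ with no pendent pentagons. Here Theorem \ref{Operation3} forces $F''=H_1\cup\{e''\}\cup H_2$ with both $H_1,H_2\in\mathcal{D}$, and by the same appeal to Lemma \ref{chooseD} each attachment vertex $v_i$ must be a white vertex of $H_i$. The isomorphism type of $F''$ as a plane subgraph of a fullerene is therefore encoded by an unordered pair $\{(H_1,v_1),(H_2,v_2)\}$ of (patch, white-vertex orbit) pairs, together with the relative planar orientation along the cut edge $e''$. My plan is to enumerate these data systematically: for each unordered pair of patches from $\mathcal{D}$, pick a white-vertex orbit in each, and record the distinct planar gluings (counting once or twice depending on whether the local configuration at $v_i$ admits a reflection symmetry fixing its outgoing half-edge). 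This should produce the $164$ configurations $T_{17},\ldots,T_{180}$ depicted in Figures \ref{terminal12I}, \ref{terminal12II} and \ref{terminal12III}.

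The principal obstacle is purely organisational. One must compute, for each $H\in\mathcal{D}$, its plane-graph automorphism group, thereby fixing the orbits of its white vertices and identifying which of those orbits have a local reflection symmetry at $v_1$ and which do not. Carrying out this tabulation patch by patch is tedious but routine, and the resulting list matches the figures referenced in the statement. I would present the argument as a verification, rather than a reconstruction, of the $180$ plane graphs drawn in those figures, since the corollary is essentially a concise summary of an exhaustive but finite case check.
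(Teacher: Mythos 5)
Your proposal is correct and follows the same route the paper intends: the corollary is stated without a separate proof precisely because it is the enumeration consequence of Theorem \ref{Operation3} (which gives the decomposition into a pentagon plus a patch in $\mathcal{D}$, or two patches in $\mathcal{D}$, joined by the cut edge $e''$) combined with the second assertion of Lemma \ref{chooseD}, which pins the attachment vertices down to the white vertices of the patches in Fig.~\ref{patches}. Your systematic tabulation of (patch, white-vertex orbit) pairs modulo plane automorphisms, with attention to the reflection/orientation ambiguity of the gluing in the twelve-edge case, is exactly the finite case check that produces $T_1,\ldots,T_{16}$ and $T_{17},\ldots,T_{180}$.
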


\begin{lem}\label{main-lem}
Let $H$ be an induced subgraph of fullerene $F$ with a unique perfect matching $M$ and having no $1$-degree vertices. Then for any isolated edge $e_3$ in $F-H$, we have

\emph{(}i\emph{)} If $H$ consists of two patches $H_x$ and $H_y$ connecting by an edge $xy\in M$ with $x\in H_x, y\in H_y$ and $|\triangledown_F(H_x)|=|\triangledown_F(H_y)|=7$, then $e(e_3, H_x)=e(e_3, H_y)=2$. Moreover, if $T:=F-H-V(e_3)$ is a generalized patch of $F$, then $F[V(H)\cup V(e_3)]$ is a patch as shown in Fig. \ref{full_structure1}, and $\triangledown_F(T)$ and $\triangledown_F(F[V(H)\cup V(e_3)])$ merge successively along the boundaries of $T$ and $F[V(H)\cup V(e_3)]$, respectively.
If we label the edges in $\triangledown_F(F[V(H)\cup V(e_3)])$ as shown in Fig. \ref{full_structure1}, then $T$ has the distance-array $[a_1a_2\cdots a_8]$ with $a_1=1$ and $a_{5}\in\{1, 2\}$.
\begin{figure}[htbp!]
\centering
\includegraphics[height=3.2cm]{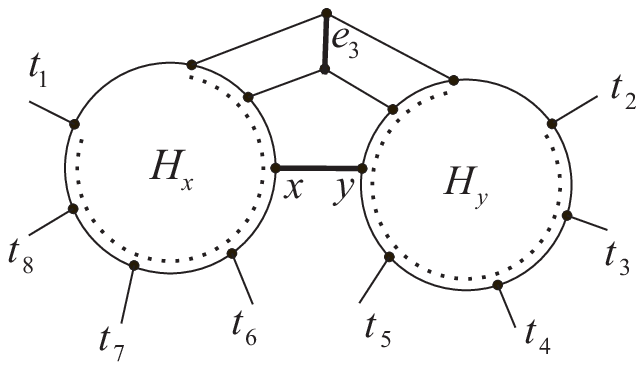}
\caption{\label{full_structure1}{\small $F[V(H)\cup V(e_3)]$.}}
\end{figure}

\emph{(}ii\emph{)} If $H$ has a pendent pentagon $P_x$ connecting by an edge $xy$ to $R:=H-P_x$ with $x\in P_x$ and $y\in R$, and $|\triangledown_F(R)|\leq9$, then $e(e_3, P_x)=1$. Moreover, if $T:=F-H-V(e_3)$ is a generalized patch of $F$ and $T$ has a perfect matching, then the three edges in $E(P_x, T)$ are incident with three successive vertices on $P_x$ respectively, and there are two elements $a_i$ and $a_{i+1}$ in the distance-array $[a_1a_2\cdots a_8]$ of $T$ such that $a_i, a_{i+1}\in\{3, 4\}$ \emph{(}here $a_9:=a_1$\emph{)}.
\end{lem}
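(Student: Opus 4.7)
The plan is to handle both parts with a common cut-analysis strategy: bound $e(V(e_3),H_\bullet)$ via size considerations on edge-cuts, and then use planarity and the girth condition to read off the distance-array. Since $e_3$ is isolated in $F-H$ and $F$ is cubic, each endpoint of $e_3$ contributes exactly two edges into $H$, so $e(V(e_3),H)=4$ throughout.

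For part~(i), I would set $k:=e(V(e_3),H_x)$ so $e(V(e_3),H_y)=4-k$, and apply $|\triangledown_F(A\cup B)|=|\triangledown_F(A)|+|\triangledown_F(B)|-2\,e(A,B)$ with $|\triangledown_F(H_x)|=|\triangledown_F(H_y)|=7$ and $|\triangledown_F(V(e_3))|=4$ to get
\[
|\triangledown_F(H_x\cup V(e_3))|=11-2k,\qquad |\triangledown_F(H_y\cup V(e_3))|=3+2k.
\]
Both cuts are cyclic because $H_x$ and $H_y$ are $2$-connected patches and their respective complements also contain cycles. Lemma~\ref{3-4-edge-cut} plus the vertex count $|V(H_x)|,|V(H_y)|\geq 8$ from Lemma~\ref{formula} excludes $k\in\{0,4\}$: a 3-edge-cut of a fullerene is trivial and would have to isolate a single vertex. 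For $k\in\{1,3\}$ one of the cuts is a cyclic 5-edge-cut; size rules out the trivial case, so the characterization of non-trivial cyclic 5-edge-cuts forces $F\cong G^t$, after which Lemma~\ref{component-of-non-trivial-cyclic-5-edge-cut} makes the relevant component a pentacap-plus-hexagon-layers piece whose remainder after deleting any degree-$2$ vertex is $2$-connected; an alternating-cycle argument inside its hexagonal layers then exhibits a second perfect matching of $H_y-y$ (or $H_x-x$), contradicting uniqueness of $M$. Hence $k=2$. For the moreover, planarity, girth $5$, and the fact that $e_3$ lies in a single face of $F-H$ force the two endpoints of $e_3$ to each attach once to $H_x$ and once to $H_y$ in the configuration of Fig.~\ref{full_structure1}; the eight edges of $\triangledown_F(F[V(H)\cup V(e_3)])$ then label the boundary half-edges of $T$ cyclically, $a_1=1$ counts the single vertex of $V(e_3)$ between the two half-edges straddling $e_3$, and $a_5\in\{1,2\}$ records the vertex gap across the face containing the bridge $xy$, which is necessarily pentagonal or hexagonal in $F$.

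Part~(ii) proceeds in parallel: setting $j:=e(V(e_3),P_x)$ gives $|\triangledown_F(P_x\cup V(e_3))|=9-2j$, and from $|\triangledown_F(R)|\leq 9$ one also has $|\triangledown_F(R\cup V(e_3))|=|\triangledown_F(R)|-4+2j\leq 5+2j$. The cases $j\in\{3,4\}$ fail by the same trivial-cut/size argument (with the additional observation that $P_x\cup V(e_3)$ has only $7$ vertices, so it cannot itself be a pentacap-plus-layers piece of $G^t$), while $j\in\{0,2\}$ each force a non-trivial cyclic 5-edge-cut hence $F\cong G^t$, contradicted again via the pentacap-plus-hexagon-layers argument together with the uniqueness of $M$ restricted to $R-y$ (where a second perfect matching is produced by an alternating cycle in the hexagonal layers); so $j=1$. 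For the moreover, the unique edge from $V(e_3)$ to $P_x$ must land on a boundary vertex of $P_x$ adjacent to $x$, since any other placement would produce a face of illegal length in $F$ by planarity and girth $5$; the remaining three edges of $E(P_x,T)$ therefore sit on three successive non-$x$ pentagon vertices, and when $T$ carries a perfect matching those three consecutive half-edges carve out two adjacent entries $a_i,a_{i+1}$ of $T$'s distance-array measuring the vertex gap across $P_x$ relative to the adjoining faces of $T$, which comes to $3$ for hexagonal and $4$ for pentagonal neighbouring faces, yielding $a_i,a_{i+1}\in\{3,4\}$.

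The main obstacle will be the cyclic-5-edge-cut elimination step inside the $G^t$ regime, where one has to promote the $2$-connectedness granted by Lemma~\ref{component-of-non-trivial-cyclic-5-edge-cut} into an explicit second perfect matching of $H_y-y$, $H_x-x$, or $R-y$. This requires locating a concrete alternating cycle with respect to $M$ in the hexagonal layers of the pentacap-plus-layers piece, taking care that $V(e_3)$ and the pendant vertex lie in a position compatible with such a cycle. The distance-array deductions in the moreover parts are then routine face-counting once the configuration is pinned down.
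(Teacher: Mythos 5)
Your proposal follows essentially the same route as the paper's proof: the same cut-size bookkeeping ($|\triangledown_F(A\cup B)|=|\triangledown_F(A)|+|\triangledown_F(B)|-2e(A,B)$), elimination of the bad distributions via Lemma \ref{3-4-edge-cut}, reduction of the remaining cases to a non-trivial cyclic $5$-edge-cut and hence to a pentacap-plus-hexagon-layers component $A^i$ via Lemma \ref{component-of-non-trivial-cyclic-5-edge-cut}, a second-perfect-matching contradiction there, and facial/planarity counting for the distance-arrays. Two small points where your sketch as written would stall, and where the paper adds a branch: in the $A^i$ elimination the paper's dichotomy is that $A^i-(\{y'\}\cup V(e'))$ has two perfect matchings \emph{or} $A^i-V(e')$ acquires a $1$-degree vertex other than $y'$ (the latter contradicting the no-$1$-degree-vertex hypothesis on $H$), so for some positions of $e_3$ the promised alternating cycle simply does not exist and you must fall back on the degree argument; and in part (ii) with $j=0$ the case $i=0$ has no hexagonal layers at all, which the paper disposes of separately by showing $F\cong G^1$ forces $T\cong J_3$, which has no perfect matching.
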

\begin{proof}
From the assumption in (i), both $H_x$ and $H_y$ have cycles, and $|H_x|\geq7$, $|H_y|\geq7$. Since the cyclic edge-connectivity of $F$ is five, $e(e_3, H_x)\geq1$ and $e(e_3, H_y)\geq1$, otherwise, $\triangledown_F(H_y\cup e_3)$ or $\triangledown_F(H_x\cup e_3)$ is a cyclic $3$-edge-cut of $F$, a contradiction. If $e(e_3, H_x)=1$, then $\triangledown_F(H_y\cup e_3)$ is a cyclic $5$-edge-cut of $F$ that separates $H_x$ and $H_y$.
So $\triangledown_F(H_y\cup e_3)$ is a nontrivial cyclic $5$-edge-cut of $F$.
Let $A^i$ be a patch that consists of a pentacap and $i$ layers of hexagons (see Fig. \ref{main-lemfig} for $A^1$).
By Lemma \ref{component-of-non-trivial-cyclic-5-edge-cut}, $F[V(H_y)\cup V(e_3)]\cong A^i$ for some integer $i\geq0$, and $e_3$ is a boundary edge of $F[V(H_y)\cup V(e_3)]$.
Let $y'$ be a $2$-degree vertex in $A^i$.
For any boundary edge $e'$ of $A^i$ that is not incident with $y'$, $A^i-(\{y'\}\cup V(e'))$ has at least two perfect matchings or $A^i-V(e')$ has a $1$-degree vertex different from $y'$. So $H_y-y$ has at least two perfect matchings or $H_y$ has a $1$-degree vertex different from $y$. This contradicts that $H_y-y$ has a unique perfect matching and $H$ has not $1$-degree vertex. Hence $e(e_3, H_x)\geq2$. Similarly, $e(e_3, H_y)\geq2$. Since $e(e_3, H_x)+e(e_3, H_y)=4$,  $e(e_3, H_x)=e(e_3, H_y)=2$. Since any facial cycle of $F$ is a pentagon or hexagon and $E(H_x, H_y)=\{xy\}$, the two edges in $E(e_3, H_x)$ are incident with two ends of $e_3$.  Since $T$ is connected and $F$ is a plane graph, the two edges in $E(H_x, e_3)$ are incident with two successive $2$-degree vertices on the boundary of $H_x$ and the four edges in $E(H_x, T)$ are incident to four successive $2$-degree vertices on the boundary of $H_x$, respectively. For the six edges in $\triangledown_F(H_y)\setminus \{xy\}$, we have the same property. So $F[V(H)\cup V(e_3)]$ is a patch of $F$ as depicted in Fig. \ref{full_structure1}. Further, since $F$ is a plane graph, the edges in $\triangledown_F(T)$ and the edges in $\triangledown_F(H\cup V(e_3))$ merge successively along the boundaries of $T$ and $F[V(H)\cup V(e_3)]$. Since $E(H_x, H_y)=1$ and any facial cycle of $F$ is a pentagon or hexagon, $b_1=5$, $b_5\in\{4, 5\}$ in the distance-array $[b_1\cdots b_8]$ of $F[V(H)\cup V(e_3)]$.
So we have $a_1=1$, $a_{5}\in\{1, 2\}$ in the distance-array $[a_1a_2\cdots a_8]$ of $T$.
\begin{figure}[htbp!]
\centering
\includegraphics[height=4.0cm]{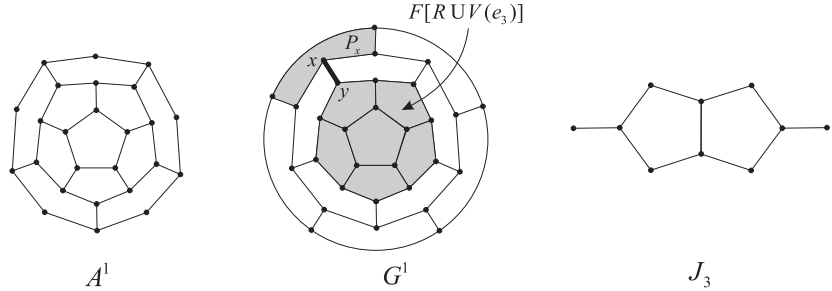}
\caption{\label{main-lemfig}{\small Illustration for the proof of Lemma \ref{main-lem}.}}
\end{figure}

For (ii), since $F''$ has no $1$-degree vertices, $R:=F''-P_x$ has a cycle. Since $P_x$ is a pentagon, $e(e_3, P_x)\leq1$. If $e(e_3, P_x)=0$, then $e(e_3, R)=4$ and $\triangledown_F(R\cup e_3)$ is a cyclic edge-cut of $F$ of size at most five since $|\triangledown_F(R)|\leq9$.
So $\triangledown_F(R\cup e_3)$ is a non-trivial cyclic $5$-edge-cut of $F$ with one component being $F[V(R)\cup V(e_3)]$.
By Lemma \ref{component-of-non-trivial-cyclic-5-edge-cut}, $F[V(R)\cup V(e_3)]\cong A^i$ for some integer $i\geq0$ ($A^i$ is as depicted in the above case).
If $i=0$, then $F\cong G^1$ (see Fig. \ref{main-lemfig}) since $E(R\cup e_3, P_x)=\{xy\}$ and $P_x$ is isomorphic to a $5$-cycle. This implies that $T\cong J_3$ as depicted in Fig. \ref{main-lemfig}. We note that $J_3$ has not perfect matching, a contradiction. So $i\geq1$.
Since $e(e_3, R)=4$, the degrees of the two ends of $e_3$ both are three in $F[V(R)\cup V(e_3)]$.
So $e_3$ is not on the boundary of $F[V(R)\cup V(e_3)]$. Let $y'$ be a $2$-degree vertex of $A^i$.
For each edge $e'$ not on the boundary of $A^i$, it is easy to check that $A^i-V(e')\cup\{y'\}$ has at least two perfect matchings.
So $R-y$ has at least two perfect matchings. This contradicts that $H$ has a unique perfect matching.
Hence $e(e_3, P_x)=1$. As the discussion of case $(i)$, if $T$ is a generalized patch of $F$, then the three edges in $E(P_x, T)$ are incident with three successive vertices on $P_x$, respectively. We suppose that the distance-array of $T$ is $[a_1\cdots a_8]$. Clearly, the three edges in $E(P_x, T)$ are three consecutive edges $t_i, t_{i+1}, t_{i+2}$ in $\triangledown_F(T)$. Since any facial cycle of $F$ is a pentagon or hexagon, $a_i, a_{i+1}\in\{3, 4\}$.
\end{proof}

\section{Generalized patches of fullerenes with $f(F)=3$}
A fullerene graph $F$ is called to be \emph{$1$-resonant} if for each hexagon $f$ of $F$, the subgraph $F-V(f)$ has a perfect matching.
\begin{thm}[\cite{1-resonant, LZ18}]\label{1-resonant}
Fullerene graph is $1$-resonant.
\end{thm}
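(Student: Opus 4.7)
My plan is to verify Tutte's condition on $F-V(f)$ for an arbitrary hexagon $f$ of $F$. Setting $T:=V(f)\cup S$ for any $S\subseteq V(F)\setminus V(f)$, it suffices to show $o(F-T)\leq|T|-6$ whenever $T\supseteq V(f)$. Suppose some $T$ violates this inequality. Since $|V(F)|$ is even, the congruence $o(F-T)\equiv|V(F)|-|T|\equiv|T|\pmod 2$ bumps the violation up to $o(F-T)\geq|T|-4$.

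The heart of the argument is a double count of edges leaving $T$. Every odd component $C$ of $F-T$ has $e(C,T)$ odd, and by Lemma~\ref{3-4-edge-cut}, $e(C,T)\geq 3$, with $e(C,T)\geq 5$ whenever $C$ contains a cycle. A component of size three cannot be a triangle (girth five), so must be $P_3$ with $e(P_3,T)=5$; more generally any tree component with $|C|\geq 3$ has $e(C,T)=|C|+2\geq 5$. Hence $e(C,T)\geq 5$ for every non-singleton odd component. Letting $p$ count the singleton odd components and $q$ the rest, the inequalities
\[
3p+5q\;\leq\; e(T,\overline T)\;\leq\; 3|T|\quad\text{and}\quad p+q\;\geq\;|T|-4
\]
combine (eliminating $p$) to give $q\leq 6$.

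The final step, which I expect to be the main obstacle, is to convert $q\leq 6$ into an outright contradiction via the pentagon budget of $F$. Each non-singleton odd component realising $e(C,T)=5$ is, up to attached pendant edges, a patch whose boundary parameters $n_1,n_2,l$ feed into Lemma~\ref{formula} to force at least one and often several of the twelve pentagons of $F$ to lie inside $C$; the classification of such patches resembles Lemma~\ref{component-of-non-trivial-cyclic-5-edge-cut}. The singleton odd components similarly pin down the pentagonal structure around their three neighbours in $T$. Summing the commitments across the $\leq 6$ non-singleton and the $p$ singleton components should either exceed the global cap of twelve pentagons of $F$ or confine the configuration to one ruled out by Lemma~\ref{5-pentagons-cylic-6-edge-cut}. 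The delicate residual case is $q$ small and $p$ large; here a direct planar argument around $f$ would close it, noting that $V(f)$ contributes at most six of the edges in $\partial T$, and using $3$-connectivity together with the girth-$5$ condition to show that $T$ cannot shadow $|T|-4$ pairwise non-adjacent vertices without forcing a forbidden short cyclic edge-cut or an over-long face.
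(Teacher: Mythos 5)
The paper itself offers no proof of this theorem---it is imported from the cited references---so your attempt can only be judged on its own terms. Your setup (Tutte's condition on $F-V(f)$, the parity bump to $o(F-T)\geq|T|-4$, and the edge count using Lemma~\ref{3-4-edge-cut} to show every non-singleton odd component $C$ has $e(C,T)\geq 5$) is correct and is the standard route. But the proof is not complete: the entire final step is written in the conditional (``should either exceed the global cap'', ``would close it''), and as sketched it does not close. With $q\leq 6$ non-singleton odd components, Lemma~\ref{formula} applied to a component with a cyclic $5$-edge-cut boundary yields only $f_5=l-4\geq 1$ pentagon per component, i.e.\ at most a commitment of $6$ pentagons---nowhere near the cap of $12$---and singleton components pin down no pentagons at all, so the case you yourself flag as delicate ($q$ small, $p$ large) is genuinely unresolved.

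The gap is repairable, and the repair shows you discarded exactly the information you needed: you bounded $e(T,\overline{T})\leq 3|T|$, ignoring that $T\supseteq V(f)$ forces at least the six edges of the hexagon to lie inside $T$, whence $e(T,\overline{T})=3|T|-2\,i(T)\leq 3|T|-12$. Feeding this into your own inequalities gives $3(|T|-4)+2q\leq 3p+5q\leq 3|T|-12$, hence $q=0$, and then $p\leq |T|-4\leq p$ forces equality throughout: $i(T)=6$ (the only edges inside $T$ are those of $f$), there are no even components, and every vertex outside $T$ is a singleton with all three neighbours in $T$. Thus $V(F)=T\cup U$ with $U$ independent and $F-E(f)$ bipartite with parts $T$ and $U$. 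Every pentagon of $F$, being an odd cycle, must therefore use an edge of $f$; but each of the six edges of $f$ lies on only one face besides $f$, so at most six faces other than $f$ contain an edge of $f$, contradicting the fact that $F$ has twelve pentagons. No pentagon-budget classification or appeal to Lemma~\ref{5-pentagons-cylic-6-edge-cut} is needed.
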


\begin{thm}\label{initial_seed}
Let $F$ be a fullerene that is not nanotube of type $(4, 2)$.
If $S=\{e_1, e_2, e_3\}$ forces a perfect matching $M$ of $F$ and any proper subset of $S$ is not a forcing set of $F$, then $F\cong W_i$, $i\in\{1, \ldots, 81\}$ \emph{(}see Figs. \emph{\ref{fullerene1}} and \emph{\ref{fullerene2}}\emph{)}, or $F$ has a generalized patch $L\in \mathcal{L}$ such that $S\subseteq E(L)$ forces a perfect matching $M\cap E(L)$ of $L$ in the sense of $F$, where $\mathcal{L}:=\{L_1, L_2, \ldots, L_{95}\}$ \emph{(}see Figs. \emph{\ref{seed-case1}--\ref{seed-case3-3.2}}\emph{)}.
\end{thm}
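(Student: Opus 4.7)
The plan is a two-branch case analysis driven by the structure of the terminal subgraph $F''$ produced by the iterative deletion of $1$-degree vertices starting from the seed $F_0' = F[V(S)]$. Since $S$ forces $M$, each passage from $F_i'$ to $F_{i+1}'$ amounts to absorbing a $1$-degree vertex of $F_i''$ together with its (uniquely determined) $M$-partner; the step is thus combinatorially determined once we specify whether the face being closed is a pentagon or a hexagon. Lemma \ref{|X_i|} confines the boundary size $|X_i|$ to $\{8,10,12\}$ throughout, which is what keeps the enumeration finite. I would first enumerate the possible shapes of the seed $F_0'$ (classified up to symmetry by the mutual placement of $e_1,e_2,e_3$ within $F$), discarding seeds on which some proper subset of $S$ already forces $M$.

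\textbf{Branch A: $F''=\emptyset$.} Then $F=F'$ is entirely generated by the growth process. Starting from each admissible seed and branching at every step on a pentagon/hexagon choice, one obtains a finite (by Lemma \ref{|X_i|}) tree of possibilities. I would prune by retaining only those growth histories that close up to a genuine fullerene and that are not nanotubes of type $(4,2)$, which are excluded by hypothesis and already handled by Lemma \ref{(4,2)-nanotube}. The claim is that, up to isomorphism, this process returns precisely the $81$ fullerenes $W_1,\ldots,W_{81}$.

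\textbf{Branch B: $F''\neq\emptyset$.} Then $L:=F'$ is a generalized patch of $F$ containing $V(S)$, and $\triangledown_F(F'')=X_k$ identifies the boundary edges that attach $L$ to $F''$. By Theorem \ref{Operation3} and Corollary \ref{terminal-gene-patch}, $F''$ is one of $J_2$, $PP$, or $T_1,\ldots,T_{180}$. Lemma \ref{main-lem} pins down the interface in the two delicate cases: when $F''$ decomposes as two patches with $7$-cuts joined by an $M$-edge, the distance-array $[a_1\cdots a_8]$ of $L$ must satisfy $a_1=1$ and $a_5\in\{1,2\}$; when $F''$ contains a pendent pentagon, some consecutive pair of entries in the distance-array of $L$ must both lie in $\{3,4\}$. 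Combining these interface conditions with the seed classification and the admissible growth histories for $L$, the admissible patches constitute exactly the $95$ generalized patches listed in $\mathcal{L}=\{L_1,\ldots,L_{95}\}$.

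The main obstacle is the sheer bulk of the enumeration. In Branch A one must verify that the growth terminates and produces exactly $81$ distinct isomorphism types, with no overlaps and no omissions; Lemma \ref{formula} together with the bound $|X_i|\le 12$ sharply limits the number of pentagons absorbed, but the subsequent case split remains substantial. In Branch B, one must, for each of the $181$ terminal patches $F''$ paired with each admissible seed, enumerate all patches $L$ with a perfect matching uniquely forced by $S$ inside $F$ and match each to some $L_i$. The distance-array formalism together with the counting identity $6-f_5=4n_1+2n_2-\ell$ of Lemma \ref{formula} drive the pruning, but the verification is extensive and, for a completely rigorous count, likely calls for systematic (and possibly computer-assisted) case checking.
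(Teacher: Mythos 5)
Your two-branch dichotomy does not match what the statement asserts, and neither branch would terminate in the claimed finite lists. In Branch A you claim that $F''=\emptyset$ yields exactly the $81$ fullerenes $W_i$; but the set of fullerenes entirely consumed by the $1$-degree-vertex absorption process is infinite (this is precisely the situation the paper later handles in Section 5 via the operations $O_1$--$O_7$ and the digraph $D$), so this branch cannot return a fixed list of $81$ graphs. In Branch B you set $L:=F'$, the \emph{terminal} patch obtained by running the deletion process until $F''$ has no $1$-degree vertices; since $F''$ is then bounded in size (it is $J_2$, $PP$, or built from two patches in $\mathcal{D}$), the patch $F'$ grows without bound as $F$ does, so it cannot coincide with one of $95$ fixed generalized patches. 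The actual roles are inverted relative to your proposal: the fullerenes $W_i$ arise exactly when, after only a few absorption steps, some $F_j''$ already has \emph{no} $1$-degree vertices, so that Theorem \ref{Operation3} forces $F_j''$ to be two patches from $\mathcal{D}$ joined by an $M$-edge and Lemma \ref{main-lem} rigidly pins the interface, whence the distance-array conditions select the admissible pairs from $\mathcal{D}$ and determine $F$ completely. The patches $L\in\mathcal{L}$ arise in the complementary situation where $F_j''$ still has a $1$-degree vertex: the paper then absorbs that vertex and its $M$-partner, records the resulting \emph{small} generalized patch (after a bounded number of such steps, organized by the three initial shapes of $F[V(S)]$: $P_6$, $P_4$ plus an edge, or three independent edges), and stops. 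The theorem deliberately does not finish constructing $F$; it only certifies a bounded-size seed.

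Concretely, the missing idea is that the enumeration must be driven by a case split on the shape of the induced subgraph $F_0'=F[V(S)]$ (connected, two components, three components), with the minimality hypothesis on $S$ and $1$-resonance eliminating $C_6$, $J_1$ and $T'$ in the connected case, and with each subsequent step branching on whether the current complement $F_i''$ has a $1$-degree vertex. Your interface conditions from Lemma \ref{main-lem} are the right tool, but they are applied in the paper to the small intermediate patches $F_j'$ (whose distance-arrays are explicitly computable, e.g.\ $[13231323]$ for $L_{b1}-V(e_3)$) to rule out pendent pentagons and to restrict which $D_{ij}\in\mathcal{D}$ can attach --- not to a terminal patch of unbounded size. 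Without this reorganization, the "sheer bulk of the enumeration" you flag is not merely large but infinite, and the proof as proposed cannot close.
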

\begin{proof}
By the assumption, $M$ is a unique perfect matching of $F$ with $S\subseteq M$. Set $F_0':=F[V(S)]$ and $F_0'':=F-F_0'$.
Obviously, $F_0'$ has three cases as follows.

\emph{Case 1.} $F_0'$ is connected.
\begin{figure}[htbp!]
\centering
\includegraphics[height=1.5cm]{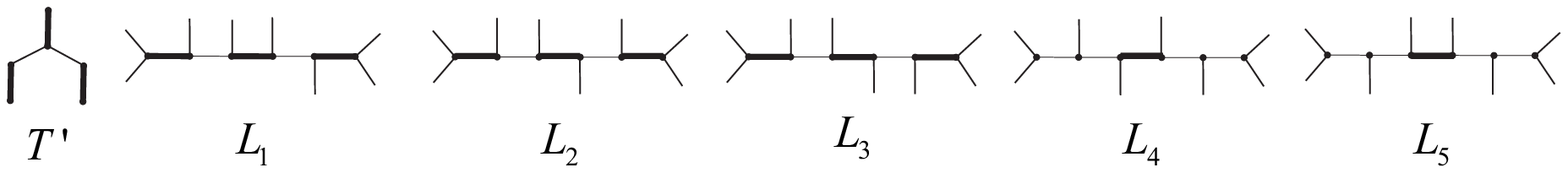}
\caption{\label{seed-case1}{\small $F_0'$ is connected.}}
\end{figure}

Since the order of $F_0'$ is six, $F_0'\cong P_6$, or $C_6$, or $J_1$ (see Fig. \ref{sub-patches}), or $T'$ (see Fig. \ref{seed-case1}). Since any proper subset of $S$ is not a forcing set of $F$, $F_0'\not\cong J_1$, $T'$. Clearly, each $6$-cycle of $F$ is a facial cycle. By Theorem \ref{1-resonant}, $F$ is $1$-resonant. Since $F-V(C_6)$ is $2$-connected, $F-V(C_6)$ has at least two perfect matchings. So $F_0'\cong P_6$. It follows that $F$ has a generalized patch $L_i$, $i\in\{1, \ldots, 5\}$ (see Fig. \ref{seed-case1}).
\begin{figure}[htbp!]
\centering
\includegraphics[height=3.9cm]{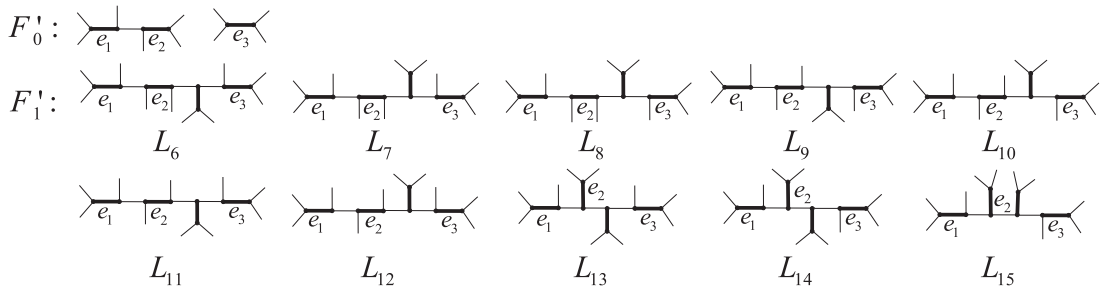}
\caption{\label{seed-case2-1}{\small $F_0'$ has two components (I).}}
\end{figure}

\emph{Case 2.} $F_0'$ has exactly two connected components, say $B_1$ and $B_2$.

Without loss of generality, we suppose that $|B_1|\geq|B_2|$. Then $B_1$ is a path with four vertices and $B_2$ is an edge. Clearly, $|\triangledown_F(F_0')|=10$. We notice that $F_0'':=F-F_0'$ has a unique perfect matching $M\setminus S$.

We claim that $F_0''$ has a $1$-degree vertex.
If not, then $F_0''$ has a pendent pentagon $P_x$ by Theorem \ref{Operation3}.
Obviously, $\triangledown_F(B_1)$ has exactly two cases as shown in Fig. \ref{seed-case2-1} and \ref{seed-case2-2}. So the distance-array of $B_1$ is $[132132]$ or $[122214]$. This is impossible by Lemma \ref{main-lem} $(ii)$. So $F_0''$ has a $1$-degree vertex.

For $F_0'$ in Fig. \ref{seed-case2-1},
the $1$-degree vertex of $F_0''$ has one adjacent vertex in $B_1$ and one in $B_2$ since any $5$-cycle of $F$ is a facial cycle and the length of the shortest cycle in $F$ is five. So $F$ has a generalized patch $L_i$,  $i\in\{6, \ldots, 15\}$.
\begin{figure}[htbp!]
\centering
\includegraphics[height=12.cm]{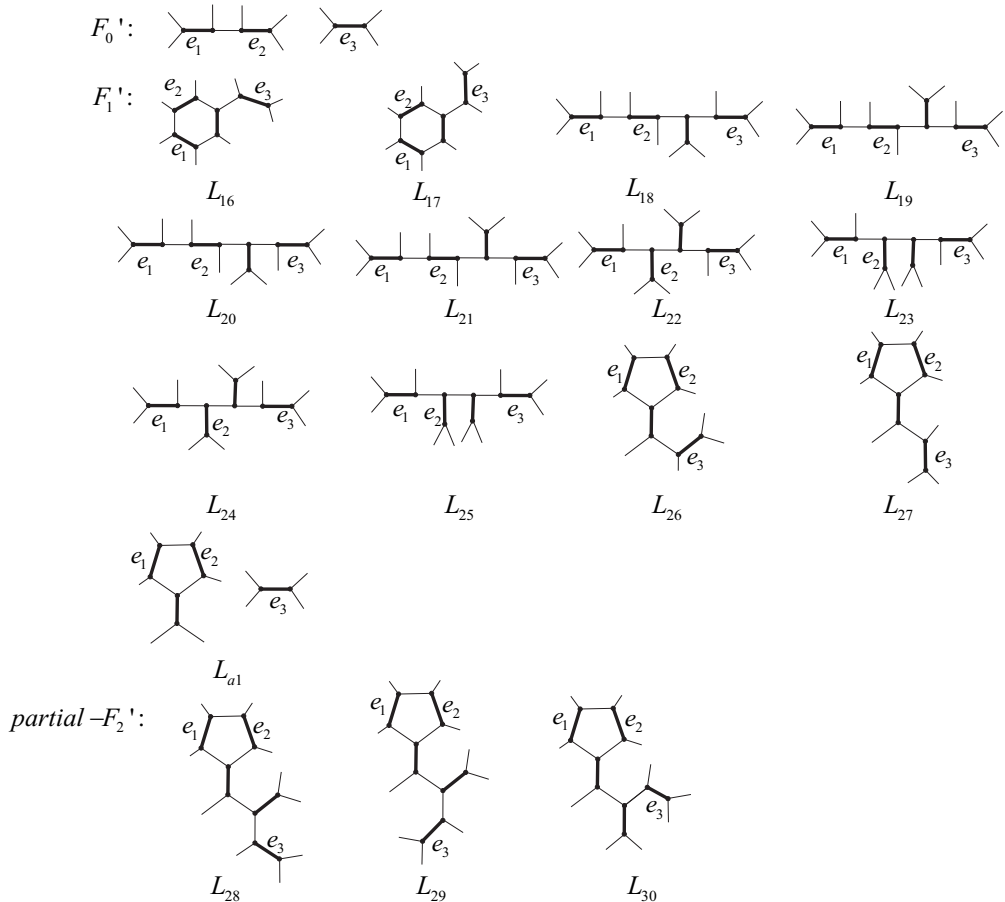}
\caption{\label{seed-case2-2}{\small $F_0'$ has two components (II).}}
\end{figure}

For $F_0'$ in Fig. \ref{seed-case2-2}, the $1$-degree vertex of $F_0''$ has one adjacent vertex in $B_1$ and one in $B_2$, or has two adjacent vertices in $B_1$. For the first case, $F$ has a generalized patch $L_i$,  $i\in\{16, \ldots, 25\}$. For the second case, $F$ has a subgraph $L_{a1}$ or a generalized patch $L_i$,  $i=26$ or $27$.
Since $|\triangledown_F(L_{a1})|=10$ and the distance-array of $L_{a1}-V(e_3)$ is $[132223]$, by Lemma \ref{main-lem} (ii), $F-V(L_{a1})$ has a $1$-degree vertex. Clearly, this $1$-degree vertex connects the two components of $L_{a1}$. So $F$ has a generalized patch $L_i$,  $i\in\{28, 29, 30\}$ or a generalized patch $L_j$, $j\in\{18, \ldots, 25\}$.

\emph{Case 3.} $F_0'$ consists of three independent edges $e_1$, $e_2$ and $e_3$ (see Fig. \ref{seed-case3}).

$F_0'':=F-F_0'$ has a unique perfect matching $M\setminus S$ and $|\triangledown_F(F_0'')|=12$.
If $F_0''$ has not $1$-degree vertices, then we claim that $F_0''$ has not pendent pentagons.
Suppose that $F_0''$ has a pendent pentagon $U$. Let edge $e\in M\setminus S$ be the pendent edge that is incident with $U$.
By Lemma \ref{main-lem} (ii), $e(U, e_i)=1$ for $i=1, 2, 3$. So $e(U, F_0''-U)=2$, a contradiction.
Hence $F_0''$ has not pendent pentagons. By Theorem \ref{Operation3}, $F_0''$ consists of two patches $U_0, V_0\in \mathcal{D}$ connecting by an edge $x_0\in M\setminus S$.
By Lemma \ref{main-lem} $(i)$, $e(U_0, e_i)=2=e(V_0, e_i)$ for $i=1, 2, 3$.
Since $F$ is a plane graph, the two edges in $E(U_0, e_i)$ are two edges in $\triangledown_F(U_0)$ (and $\triangledown_F(e_i)$) that are consecutive along the boundary of $U_0$ (and $e_i$) for $i=1, 2, 3$.
\begin{figure}[htbp!]
\centering
\includegraphics[height=3.6cm]{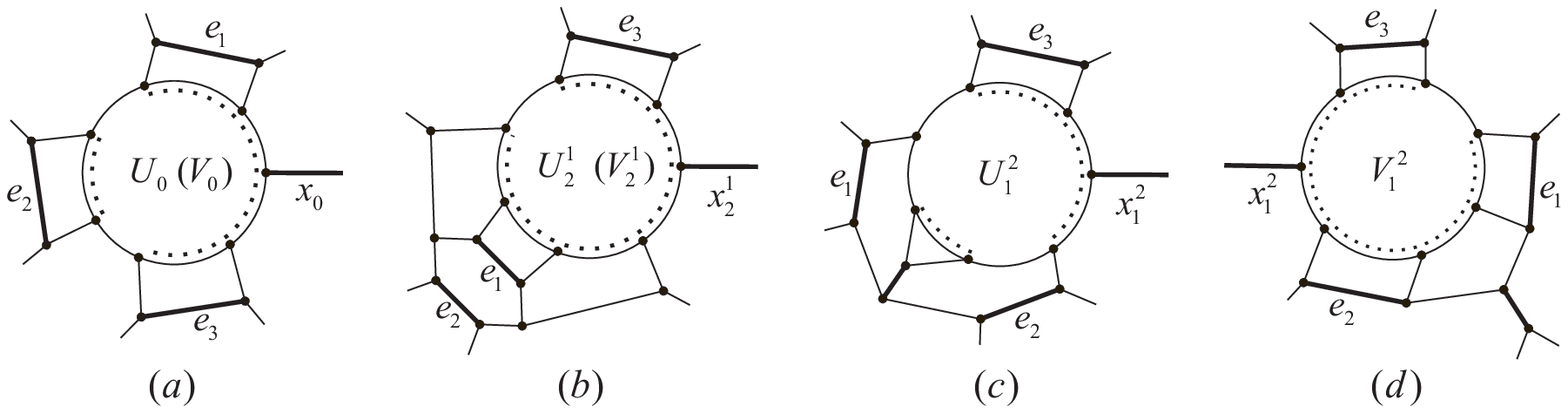}
\caption{\label{structure1}{\small }}
\end{figure}
Similarly, the two edges in $E(V_0, e_i)$ are two edges in $\triangledown_F(V_0)$ (and $\triangledown_F(e_i)$) that are consecutive along the boundary of $V_0$ (and $e_i$) for $i=1, 2, 3$.
Since $E(U_0, V_0)=\{x_0\}$ and any facial cycle of $F$ is $5$-cycle or $6$-cycle, the two edges in $E(U_0, e_i)$ are incident with two ends of $e_i$ respectively, $i=1, 2, 3$. So both $F[V(U_0)\cup V(S)]$ and $F[V(V_0)\cup V(S)]$ have the structures as depicted in Fig. \ref{structure1} $(a)$.
It implies that both $U_0$ and $V_0$ have the distance-array $[a_1\wedge a_2a_32a_52a_7]$, where $a_1, a_2=2$ or $3$, $a_3, a_5, a_7=3$ or $4$, and $\wedge$ marks the position from which the edge $x_0$ issues out.
We can check that the distance-arrays $[2\wedge232323]$ of $D_{08}$ and $[2\wedge243234]$ of $D_{29}$ satisfy the condition of $U_0$.
So $U_0=D_{08}$ or $D_{29}$. Similarly, $V_0=D_{08}$ or $D_{29}$.
Hence, $F\cong W_1, W_2,$ or $W_3$ (see Fig. \ref{fullerene1}).

If $F_0''$ has a $1$-degree vertex, then this vertex has one adjacent vertex in $V(e_1)$ and one in $V(e_2)$. So $F$ has a subgraph that is isomorphic to $L_{b1}$, $L_{b2}$, or $L_{b3}$, or has a generalized patch $L_i$, $i\in \{31, \ldots, 36\}$ (see Fig. \ref{seed-case3}).
Now we consider the following three cases.
\begin{figure}[htbp!]
\centering
\includegraphics[height=6.2cm]{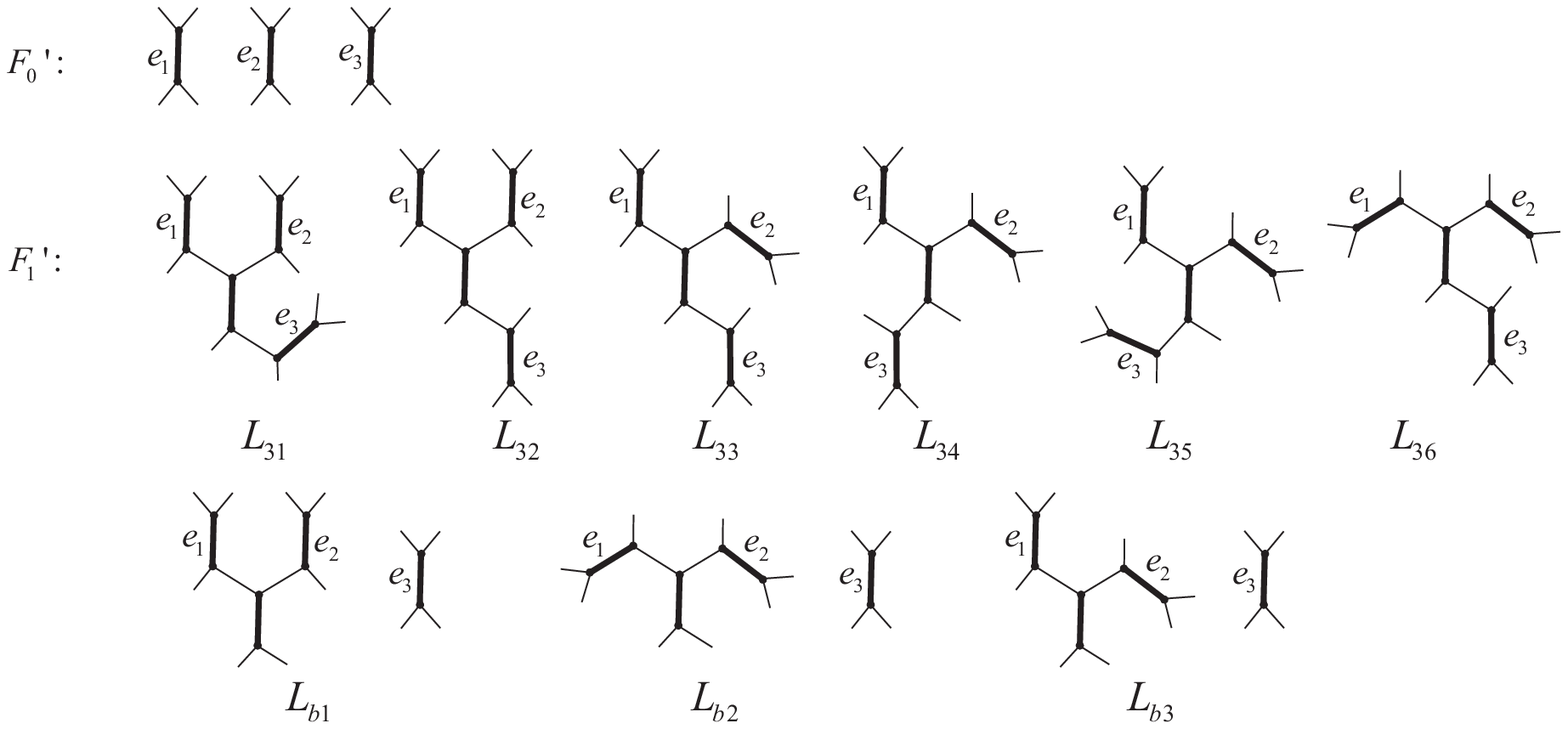}
\caption{\label{seed-case3}{\small $F_0'$ has three components.}}
\end{figure}

\emph{Subcase 3.1.} $F$ has a subgraph $L_{b1}$.

Let $F_1':=L_{b1}$ and $F_1'':=F-F_1'$. Clearly, $F_1''$ has a $1$-degree vertex that is adjacent to one end of $e_1$ and one end of $e_2$. Then $F$ has a subgraph $F_2'$ as depicted in Fig. \ref{seed-case3-1}, or has a generalized patch $L_{31}$ or $L_{32}$ (see Fig. \ref{seed-case3}).  Let $F_2'':=F-F_2'$. If $F_2''$ has not $1$-degree vertices, then it also has not pendent pentagons by Lemma \ref{main-lem} $(ii)$ since the distance-array of $F_2'-V(e_3)$ is $[13231323]$. By Theorem \ref{Operation3}, $F_2''$ consists of two patches $U_2^1, V_2^1\in \mathcal{D}$ connecting by only an edge $x_2^1$.
\begin{figure}[htbp!]
\centering
\includegraphics[height=4.5cm]{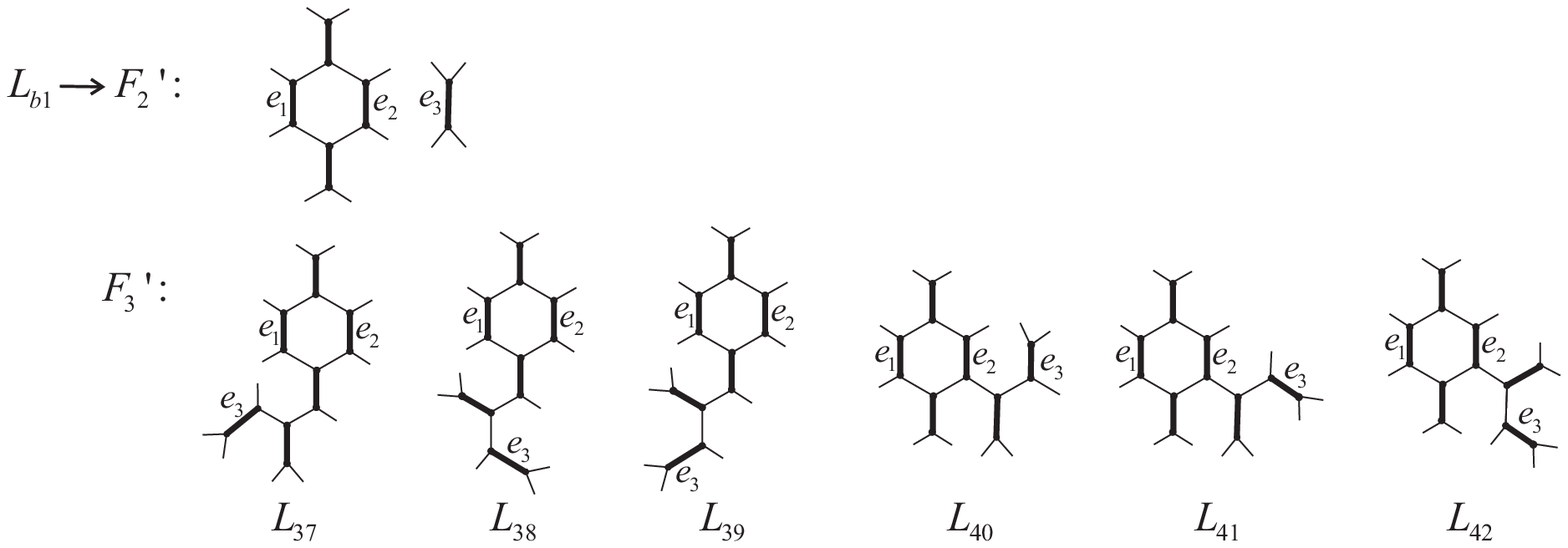}
\caption{\label{seed-case3-1}{\small $F_1'$ is isomorphic to $L_{b1}$.}}
\end{figure}
By Lemma \ref{main-lem} $(i)$, $F[F_2''\cup V(e_3)]$ is a patch that has the structure as shown in Fig. \ref{full_structure1}, and
the edges in $\triangledown_F(L_{b1}-V(e_3))$ and $\triangledown_F(F[F_2''\cup V(e_3)])$ merge successively along the boundaries of $L_{b1}-V(e_3)$ and $F[F_2''\cup V(e_3)]$, respectively.
So the  structures of $F[V(U_2^1)\cup V(F_2')]$ and $F[V(V_2^1)\cup V(F_2')]$ are as depicted in Fig. \ref{structure1} $(b)$.
The distance-arrays of $U_2^1$ and $V_2^1$ are both $[a_1\wedge a_2a_32a_5a_6a_7]$, where $a_1, a_2, a_5, a_7=2$ or $3$ and $a_3, a_6=3$ or $4$.
It is easy to check that the distance-arrays $[2\wedge232232]$ of $D_{05}$, $[3\wedge232232]$ of $D_{08}$, $[2\wedge242233]$ of $D_{09}$, $[2\wedge332233]$ of $D_{12}$, $[3\wedge242242]$ of $D_{13}$, $[2\wedge242333]$ of $D_{18}$, $[2\wedge342243]$ of $D_{19}$ satisfy the above condition of $U_2^1$. Similarly, we have the same conclusion for $V_2^1$. So $U_2^1, V_2^1\in\{D_{05}, D_{08}, D_{09}, D_{12}, D_{13}, D_{18}, D_{19}\}$. Hence $F\cong W_i$, $i\in\{4, \ldots, 26\}$ (see Fig. \ref{fullerene1}).
If $F_2''$ has a $1$-degree vertex, then this vertex connects the two components of $F_2'$. So $F$ has a patch $L_i$, $i\in\{37, \ldots, 42\}$.

\emph{Subcase 3.2.} $F$ has a subgraph $L_{b2}$.

By the above discussions, we suppose that any two edges in $S$ can not be as $\{e_1, e_2\}$ in $L_{b1}-V(e_3)$.
Let $F_1':=L_{b2}$ and $F_1'':=F-F_1'$.
If $F_1''$ has not $1$-degree vertices, then it also has no pendent pentagons by Lemma \ref{main-lem} (ii) since the distance-array of $L_{b2}-V(e_3)$ is $[12321414]$. By Theorem \ref{Operation3}, $F_1''$ consists of two patches $U_1^2, V_1^2\in \mathcal{D}$ connecting by only one edge $x_1^2$.
By Lemma \ref{main-lem} (i), the structures of $F[V(U_1^2)\cup V(F_1')]$ and $F[V(V_1^2)\cup V(F_1')]$ are as depicted in Fig. \ref{structure1} $(c), (d)$ respectively. So the distance-array of $U_1^2$ is $[a_1\wedge a_2a_322a_62]$, of $V_1^2$ is $[a_1'\wedge a_2'a_3'2a_5'a_6'a_7']$, where $a_1, a_2=2$ or $3$, $a_3=3$ or $4$, $a_6=4$ or $5$, $a_1', a_2', a_6'=2$ or $3$, $a_3', a_5', a_7'=3$ or $4$. We can check that the distance-array $[3\wedge242242]$ of $D_{13}$ satisfies the condition of $U_1^2$, and the distance-arrays $[2\wedge232323]$ of
$D_{08}$, $[2\wedge242333]$ of $D_{18}$, $[2\wedge242424]$ of $D_{29}$ satisfy the condition of $V_1^2$.
So $F\cong W_{27}, W_{28},$ or $W_{29}$ (see Fig. \ref{fullerene1}).
\begin{figure}[htbp!]
\centering
\includegraphics[height=5.0cm]{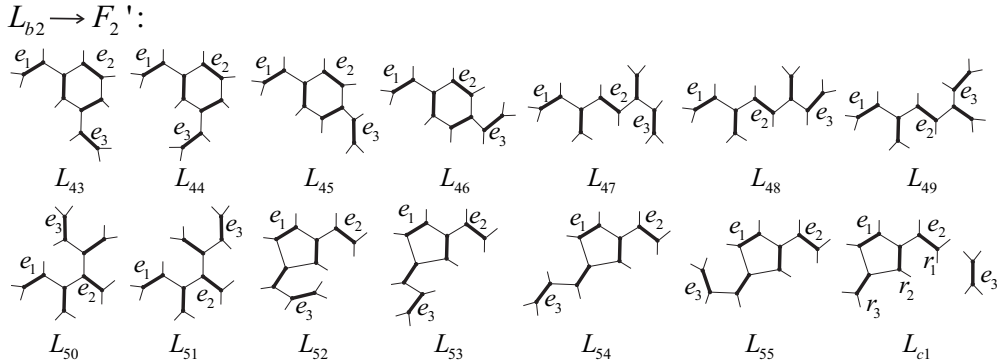}
\caption{\label{seed-case3-2.1}{\small $F_1'$ is isomorphic to $L_{b2}$.}}
\end{figure}

If $F_1''$ has a $1$-degree vertex, then this vertex connects the two components of $F_1'$ or  has two adjacent vertices in $L_{b2}-V(e_3)$. For the first case, $F$ has a generalized patch $L_i$, $i\in \{43, \ldots, 51\}$. For the second case, $F$ has a subgraph $L_{c1}$ or a generalized patch $L_i$, $i\in\{52, 53, 54, 55\}$.

Now, we suppose that $F$ has a subgraph $L_{c1}$. Let $F_2':=L_{c1}$ and $F_2'':=F-F_2'$. We notice that $F_2''$ has a unique perfect matching $M\cap E(F_2'')$.

If $F_2''$ has not $1$-degree vertices, then $F_2''$ is connected by Lemma \ref{3-4-edge-cut} and Proposition \ref{cyclic-6-edge-cut}.
If $F_2''$ has a pendent pentagon, say $P_x$, then by Lemma \ref{main-lem} (ii) the three edges in $E(P_x, L_{c1}-V(e_3))$ are three consecutive edges in $\triangledown_F(P_x)$ along the boundary of $P_x$. So the three edges $r_1, r_2, r_3$ in $L_{c1}$ (see Fig. \ref{seed-case3-2.1}) are those three edges in $E(P_x, L_{c1}-V(e_3))$. Hence
$F$ has a generalized patch $L_{56}$ or $L_{57}$.
\begin{figure}[htbp!]
\centering
\includegraphics[height=10cm]{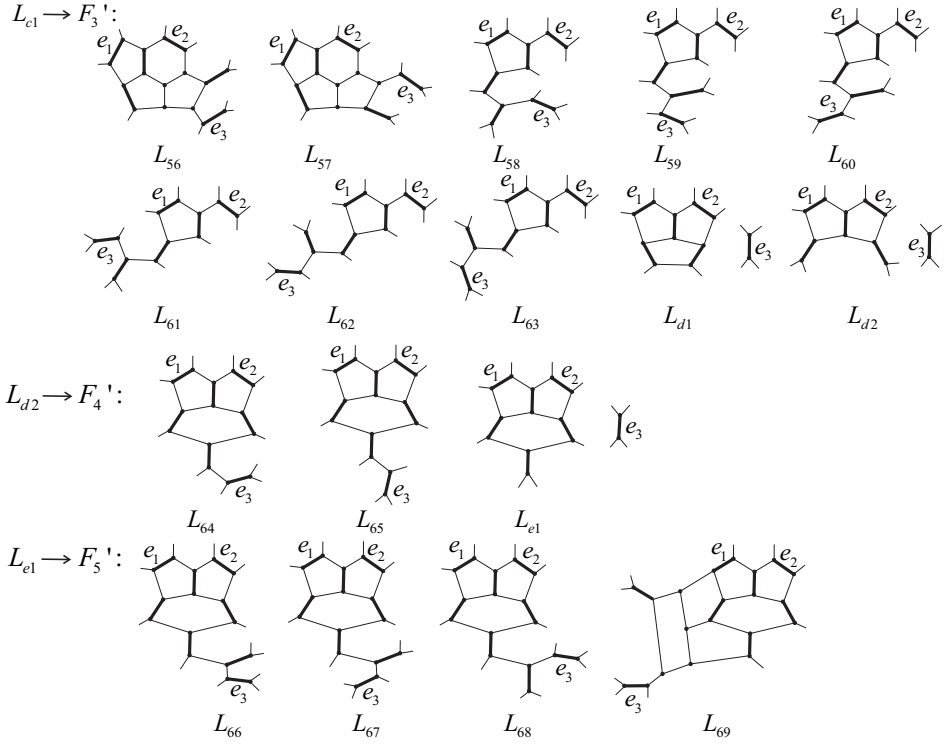}
\caption{\label{seed-case3-2.2}{\small $F_2'$ is isomorphic to $L_{c1}$.}}
\end{figure}

If $F_2''$ has no pendent pentagons, then $F_2''$ consists of two patches $U_2^2, V_2^2\in \mathcal{D}$ connecting by only an edge $x_2^2$ by Theorem \ref{Operation3}. By Lemma \ref{main-lem} (i), the structures of $F[V(U_2^2)\cup V(F_2')]$ and $F[V(V_2^2)\cup V(F_2')]$ are as depicted in Fig. \ref{structure2}.
So the distance-array of $V_2^2$ is $[2\wedge a_2'a_3'2a_5'2a_7']$, where $a_2', a_5'=2$ or $3$, $a_3'=3$ or $4$, $a_7'=4$ or $5$. We can check that the distance-array of each patch in $\mathcal{D}$ does not satisfy the condition of $V_2^2$.
So $V_2^2\notin \mathcal{D}$, a contradiction.
\begin{figure}[htbp!]
\centering
\includegraphics[height=3.2cm]{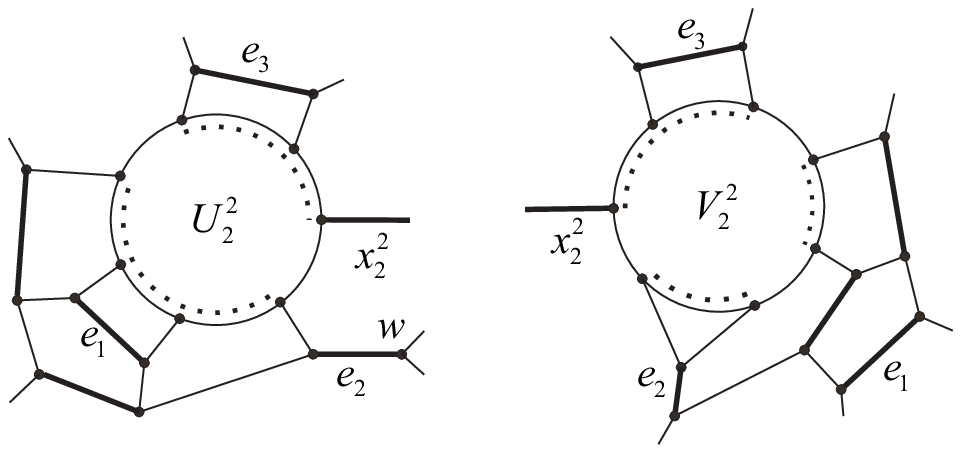}
\caption{\label{structure2}{\small}}
\end{figure}

If $F_2''$ has a $1$-degree vertex, then this vertex connects the two components of $F_2'$ or not. For the first case, $F$ has a generalized patch $L_i$, $i\in \{58, \ldots, 63\}$, or has a generalized patch $L_j$, $j\in \{43, 45, 46, \ldots, 51\}$. For the second case, $F$ has a generalized patch $L_i$, $i\in \{54, 55\}$, or has a subgraph $L_{d1}$ or $L_{d2}$ (see Fig. \ref{seed-case3-2.2}).
If $F$ has a subgraph $L_{d1}$, then by Theorem \ref{Operation3} and Lemma \ref{main-lem} (ii) $F-V(L_{d1})$ has a $1$-degree vertex since $|\triangledown_F(L_{d1})|=10$ and the distance-array of $L_{d1}-V(e_3)$ is $[232323]$. Clearly, this $1$-degree vertex
connects the two components of $L_{d1}$. So $F$ has a generalized patch $L_i$, $i\in \{48, 49, 50, 51, 54, 55\}$.

If $F$ has a subgraph $L_{d2}$, then $F-V(L_{d2})$ has a $1$-degree vertex that has two adjacent vertices in $L_{d2}-V(e_3)$. So $F$ has a subgraph $L_{e1}$ (see Fig. \ref{seed-case3-2.2}), or has a generalized patch $L_{64}$ or $L_{65}$. For the subgraph $L_{e1}$, $F-V(L_{e1})$ has a $1$-degree vertex or a pendent pentagon by Theorem \ref{Operation3} and Lemma \ref{main-lem} (i) since $|\triangledown_F(L_{e1})|=12$ and the distance-array of $L_{e1}-V(e_3)$ is $[13323233]$.
So we can check that $F$ has a generalized patch $L_i$, $i\in \{66, 67, 68, 69\}$, or has a generalized patch $L_i$, $i\in \{48, 49, 50, 51, 61, 62\}$.

\emph{Subcase 3.3.} $F$ has a subgraph $L_{b3}$.
\begin{figure}[htbp!]
\centering
\includegraphics[height=7.5cm]{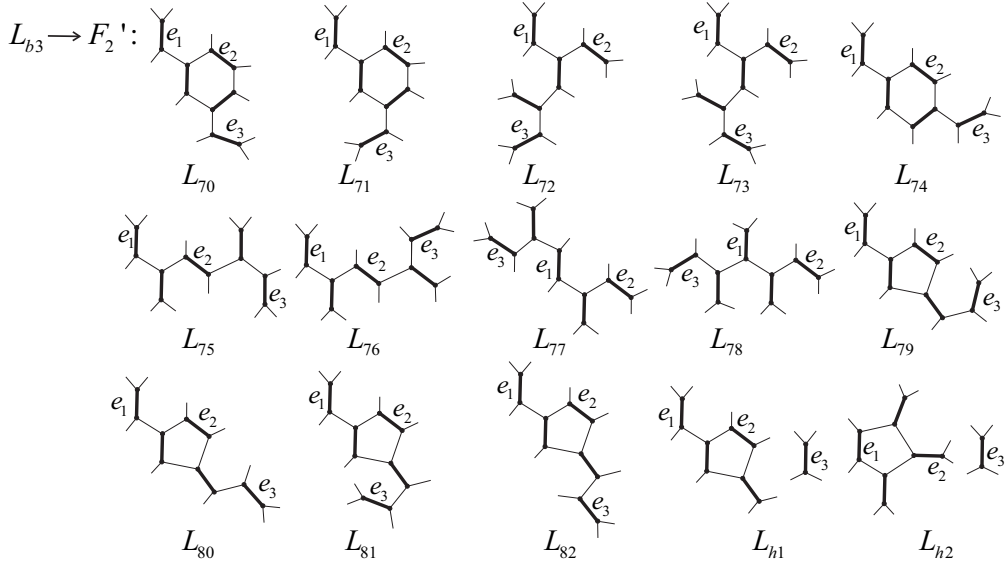}
\caption{\label{seed-case3-3}{\small $F_1'$ is isomorphic to $L_{b3}$.}}
\end{figure}

By the above discussions, we suppose that any two edges in $S$ can not be as $\{e_1, e_2\}$ in $L_{b1}-V(e_3)$ or $L_{b2}-V(e_3)$.
Let $F_1':=L_{b3}$ and $F_1'':=F-F_1'$.

If $F_1''$ has not $1$-degree vertices, then by Lemma \ref{main-lem} (ii) $F_1''$ has no pendent pentagons since the distance-array of $L_{b3}-V(e_3)$ is $[12314124]$.
By Theorem \ref{Operation3}, $F_1''$ consists of two patches $U_1^3, V_1^3\in\mathcal{D}$ connecting by only an edge $x_1^3$. By Lemma \ref{main-lem} (i), the structures of $U_1^3$ and $V_1^3$ are as depicted in Fig. \ref{structure3} $(a), (b)$.
So the distance-array of $U_1^3$ is $[2\wedge a_2a_322a_6a_7]$ and of $V_1^3$ is $[2\wedge a_2'a_3'2a_5'2a_7']$, where $a_2, a_7=2$ or $3$, $a_3=3$ or $4$, $a_6=4$ or $5$, and $a_2'=2$ or $3$, $a_3', a_5'=3$ or $4$, $a_7'=4$ or $5$.
We can check that the distance-array $[2\wedge342243]$ of $D_{19}$ satisfies the condition of $U_1^3$, and the distance-array $[2\wedge242424]$ of $D_{29}$ satisfies the condition of $V_1^3$.
So $U_1^3\cong D_{19}$ and $V_1^3\cong D_{29}$.
Clearly, $F[V(U_1^3)\cup\{z\}]\cong P^3$ (see Fig. \ref{sub-patches}) and $\triangledown_F(F[V(U_1^3)\cup\{z\}])$ is a non-degenerate cyclic $6$-edge-cut of $F$. By Lemma \ref{determine-nanotube-type(4,2)}, $F$ is a nanotube fullerene of type $(4, 2)$, a contradiction.
\begin{figure}[htbp!]
\centering
\includegraphics[height=3.1cm]{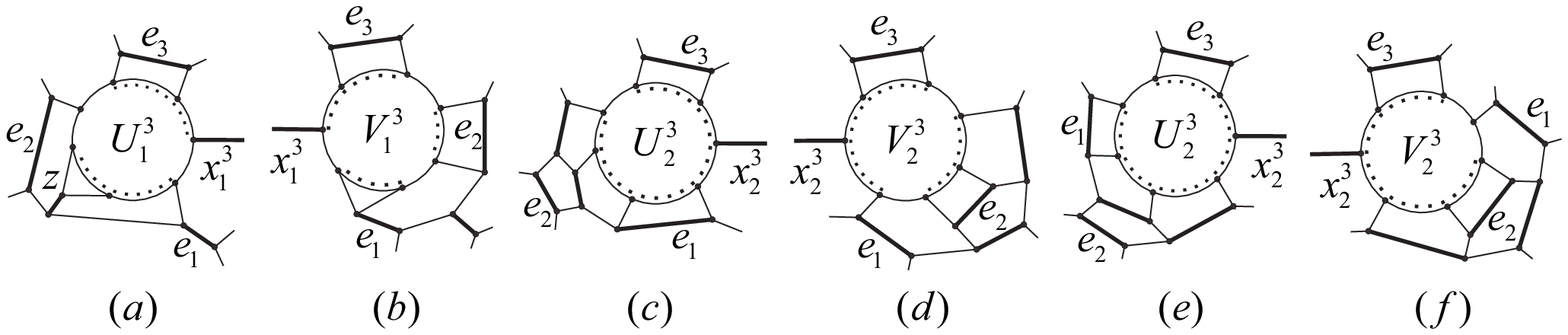}
\caption{\label{structure3}{\small}}
\end{figure}

If $F_1''$ has a $1$-degree vertex, then this vertex connects the two components of $F_1'$ or not. For the first case, $F$ has a generalized patch $L_i$, $i\in\{70, \ldots, 78\}$  or a generalized patch $L_{33}$ or $L_{35}$. For the second case, $F$ has a subgraph $L_{h1}$ or $L_{h2}$ (see Fig. \ref{seed-case3-3}), or has a generalized patch $L_j$, $j\in\{79, 80, 81, 82\}$, or a generalized patch $L_r$, $r=33, 34,$ or $35$.
\begin{figure}[htbp!]
\centering
\includegraphics[height=3.2cm]{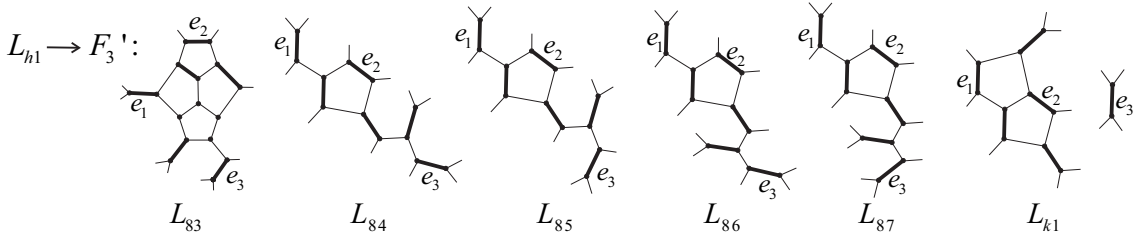}
\caption{\label{seed-case3-3.1}{\small $F_2'$ is isomorphic to $L_{h1}$.}}
\end{figure}

If $F$ has a subgraph $L_{h1}$, then let $F_2':=L_{h1}$ and $F_2'':=F-F_2'$. We note that $F_2''$ has a unique perfect matching. If $F_2''$ has not $1$-degree vertices, then $F_2''$ has a pendent pentagon or not since the distance-array of $L_{h1}-V(e_3)$ is $[13241233]$.
For the first case, as the discussion of $L_{c1}$ (see Fig. \ref{seed-case3-2.1}), we can show that $F$ has a generalized patch $L_{83}$.
For the second case,  $F_2''$ consists of two patches $U_2^3, V_2^3\in \mathcal{D}$ connecting by only an edge $x_2^3$ by Theorem \ref{Operation3}.
By Lemma \ref{main-lem} (i), the structures of $U_2^3$ and $V_2^3$ are as depicted in Fig. \ref{structure3} $(c)(d)$, or as Fig. \ref{structure3} $(e)(f)$.
If the structures of $U_2^3$ and $V_2^3$ are
as depicted in Fig. \ref{structure3} $(c)(d)$, the distance-array of $U_2^3$ is $[a_1\wedge a_2a_32a_5a_6a_7]$ and of $V_2^3$ is $[a_1'\wedge a_2'a_3'2a_5'a_6'2]$, where $a_1, a_2, a_5, a_6=2$ or $3$, $a_3, a_7=3$ or $4$, and $a_1', a_2', a_5'=2$ or $3$, $a_3', a_6'=3$ or $4$.
We can check that the distance-arrays $[2\wedge232223]$ of $D_{05}$, $[2\wedge232323]$ of $D_{08}$, $[2\wedge242233]$ of $D_{09}$, $[2\wedge332233]$ of $D_{12}$, $[2\wedge242333]$ of $D_{18}$ satisfy the condition of $U_2^3$,
\begin{figure}[htbp!]
\centering
\includegraphics[height=21.2cm]{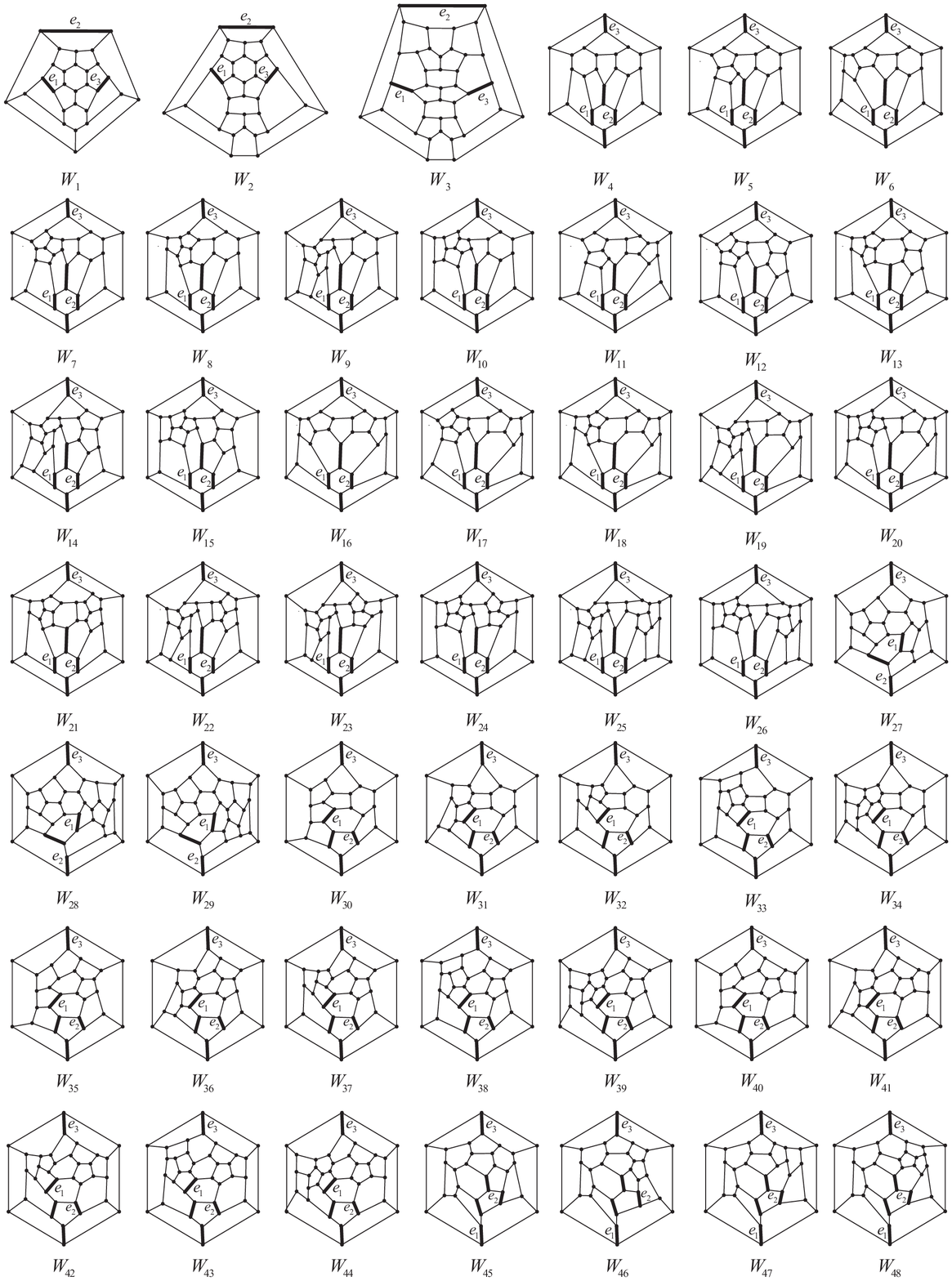}
\caption{\label{fullerene1}{\small}}
\end{figure}
and the distance-arrays $[2\wedge232232]$ of $D_{05}$, $[3\wedge232232]$ of $D_{08}$, $[3\wedge242242]$ of $D_{13}$ satisfy the condition of $V_2^3$.
Then $F\cong W_i$, $i\in\{30, \ldots, 44\}$ (see Fig. \ref{fullerene1}).
If the structures of $U_2^3$ and $V_2^3$ are as depicted in Fig. \ref{structure3} $(e)(f)$, we can similarly show that
$U_2^3=D_{08}$ or $D_{18}$, and $V_2^3=D_{05}, D_{08}, D_{09}, D_{12}, D_{13}$ or $D_{19}$. So
$F\cong W_i$, $i\in\{45, \ldots, 56\}$  (see Fig. \ref{fullerene1}, \ref{fullerene2}).

If $F_2''$ has a $1$-degree vertex, then this vertex connects the two components of $F_2'$ or not. For the first case, $F$ has a generalized patch $L\in\{L_{84}, L_{85}, L_{86}, L_{87}\}$ or a generalized patch $L_i$, $i\in\{33, 35, 72, 76, 77, 78\}$.
For the second case, $F$ has a subgraph $L_{k1}$ (see Fig. \ref{seed-case3-3.1}), or a generalized patch $L_j$, $j\in\{33, 34, 35\}$.
\begin{figure}[htbp!]
\centering
\includegraphics[height=7.0cm]{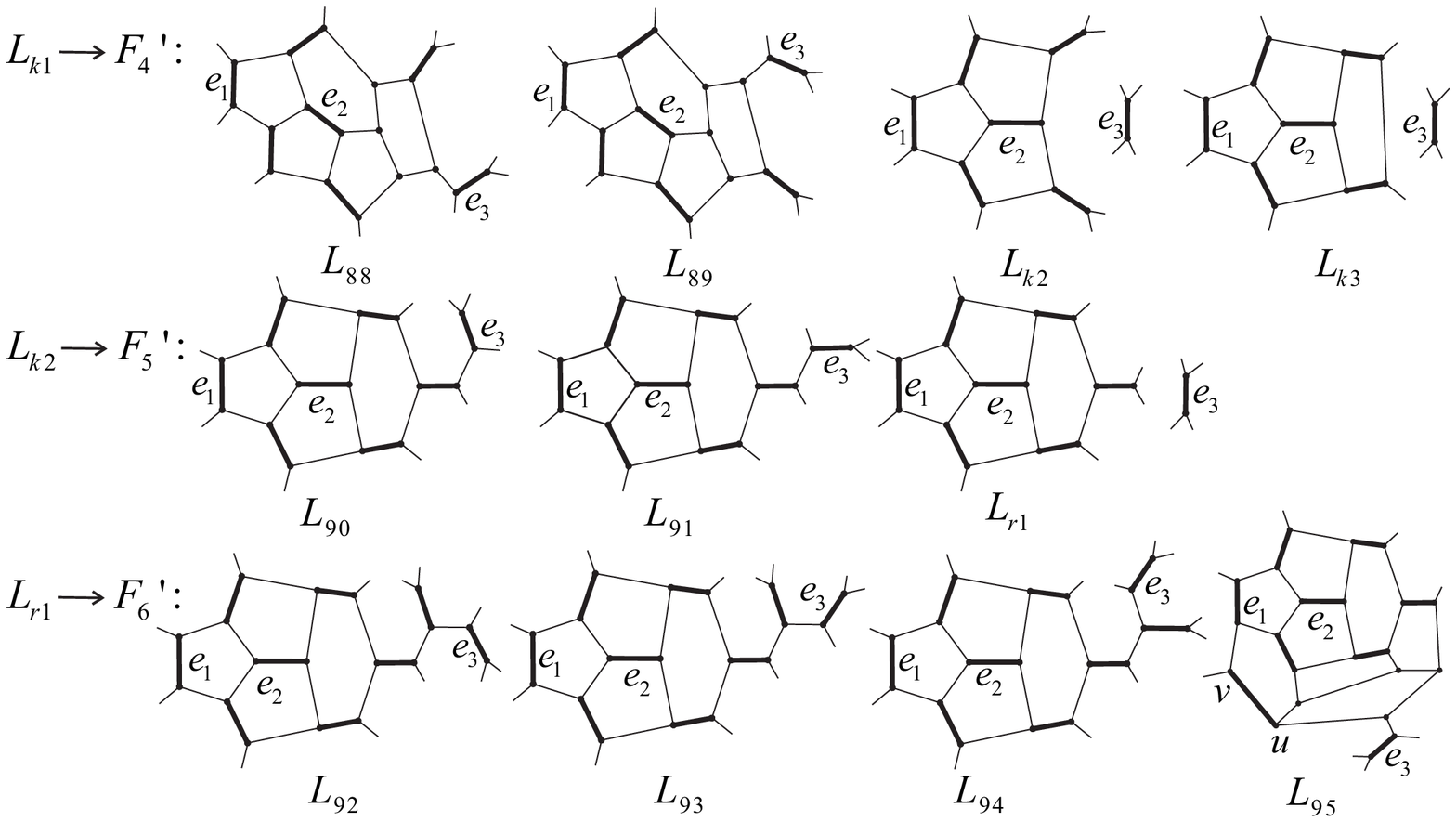}
\caption{\label{seed-case3-3.2}{\small $F_3'$ is isomorphic to $L_{k1}$.}}
\end{figure}
If $F$ has a subgraph $L_{k1}$, then let $F_3':=L_{k1}$ and $F_3'':=F-F_3'$.
If $F_3''$ has not $1$-degree vertices, then we claim that it has a pendent pentagon by Theorem \ref{Operation3} and Lemma \ref{main-lem} $(i)$ since the distance-array of $L_{k1}-V(e_3)$ is $[13323143]$.
By Lemma \ref{main-lem} (ii), $F$ has a generalized patch $L_{83}$, or has a generalized patch $L_{88}$ or $L_{89}$ (see Fig. \ref{seed-case3-3.2}).
If $F_3''$ has a $1$-degree vertex, then this vertex connects the two components of $F_3'$ or not. For the first case, $F$ has a generalized patch $L_i$, $i\in\{70, 71, 72, 73, 77, 78, 84, 85, 86, 87\}$.
For the second case, $F$ has a generalized patch $L_{81}$ or $L_{82}$, or has a subgraph $L_{k2}$ or $L_{k3}$ (see Fig. \ref{seed-case3-3.2}). If $F$ has a subgraph $L_{k3}$, then it has a generalized patch $L_j$, $j\in\{72, 78, 83, 86, 87\}$.
If $F$ has a subgraph $L_{k2}$, then $F-L_{k2}$ has a $1$-degree vertex that is adjacent to two vertices in $L_{k2}-V(e_3)$. So $F$ has a subgraph $L_{r1}$, or a generalized patch $L_{90}$, $L_{91}$. For the subgraph $L_{r1}$,
if $F_4'':=F-L_{r1}$ has a $1$-degree vertex, then this vertex connects the two components of $L_{r1}$. So $F$ has a generalized patch $L_i$, $i\in\{72, 78, 87, 92, 93, 94\}$.
Next, we suppose that $F_4''$ has no $1$-degree vertices. Then $F_4''$ has a pendent pentagon or not. If $F_4''$ has a pendent pentagon, then we can check that $F$ has a generalized patch $L_{95}$. If $F_4''$ has no pendent pentagons, then by Theorem \ref{Operation3} and Lemma \ref{main-lem} (i), $F_4''$ consists of two patches $U_3^3$, $V_3^3\in\mathcal{D}$, and the structures of $U_3^3$ and $V_3^3$ are as depicted in Fig. \ref{structure4} $(a)$. We can check that the distance-arrays $[2\wedge232223]$ and $[2\wedge232232]$ of $D_{05}$, $[2\wedge232323]$ of $D_{08}$, $[2\wedge242233]$ of $D_{09}$, $[2\wedge332233]$ of $D_{12}$, $[2\wedge242333]$ of $D_{18}$ satisfy the condition of $U_3^3$ (resp. $V_3^3$).
So $F\cong W_i$, $i\in\{57, \ldots, 76\}$ (see Fig. \ref{fullerene2}).
\begin{figure}[htbp!]
\centering
\includegraphics[height=4cm]{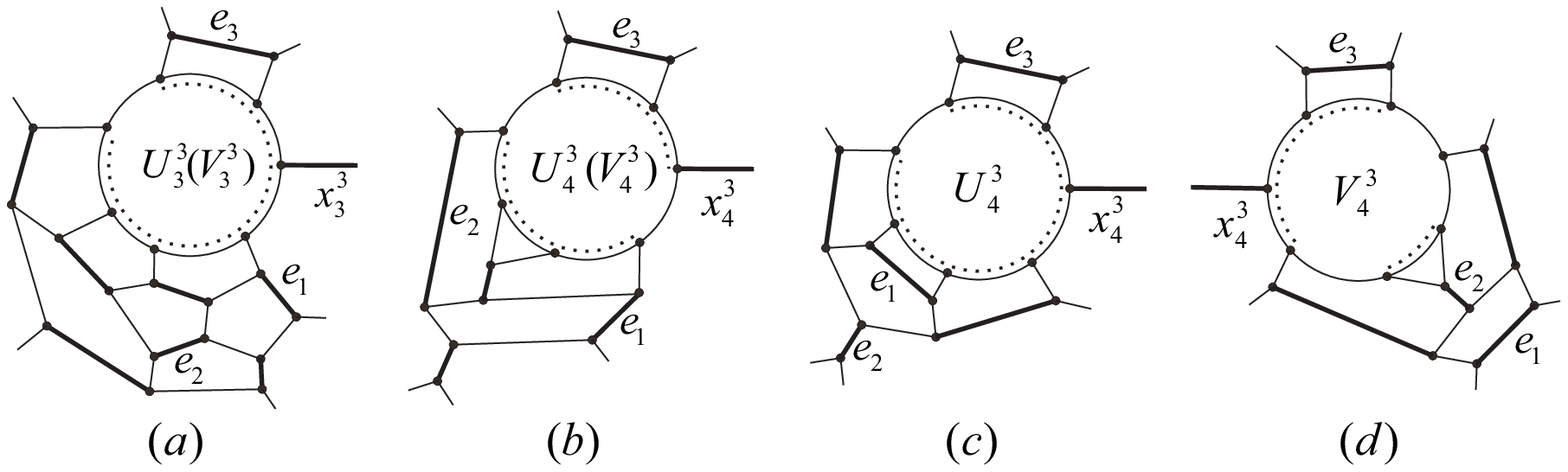}
\caption{\label{structure4}}
\end{figure}

Now, we suppose that $F$ has a subgraph $L_{h2}$ (see Fig. \ref{seed-case3-3}).
If $F-L_{h2}$ has no $1$-degree vertices, then $F-L_{h2}$ has no pendent pentagons by Lemma \ref{main-lem} (ii) since the distance-array of $L_{h2}-V(e_3)$ is $[14132314]$.
By Theorem \ref{Operation3}, $F-L_{h2}$ consists of two patches $U_4^3, V_4^3\in\mathcal{D}$ connecting by only one edge. Since the distance-array of $L_{h2}-V(e_3)$ is $[14132314]$, by Lemma \ref{main-lem} (i), the structures of $U_4^3$ and $V_4^3$
\begin{figure}[htbp!]
\centering
\includegraphics[height=14cm]{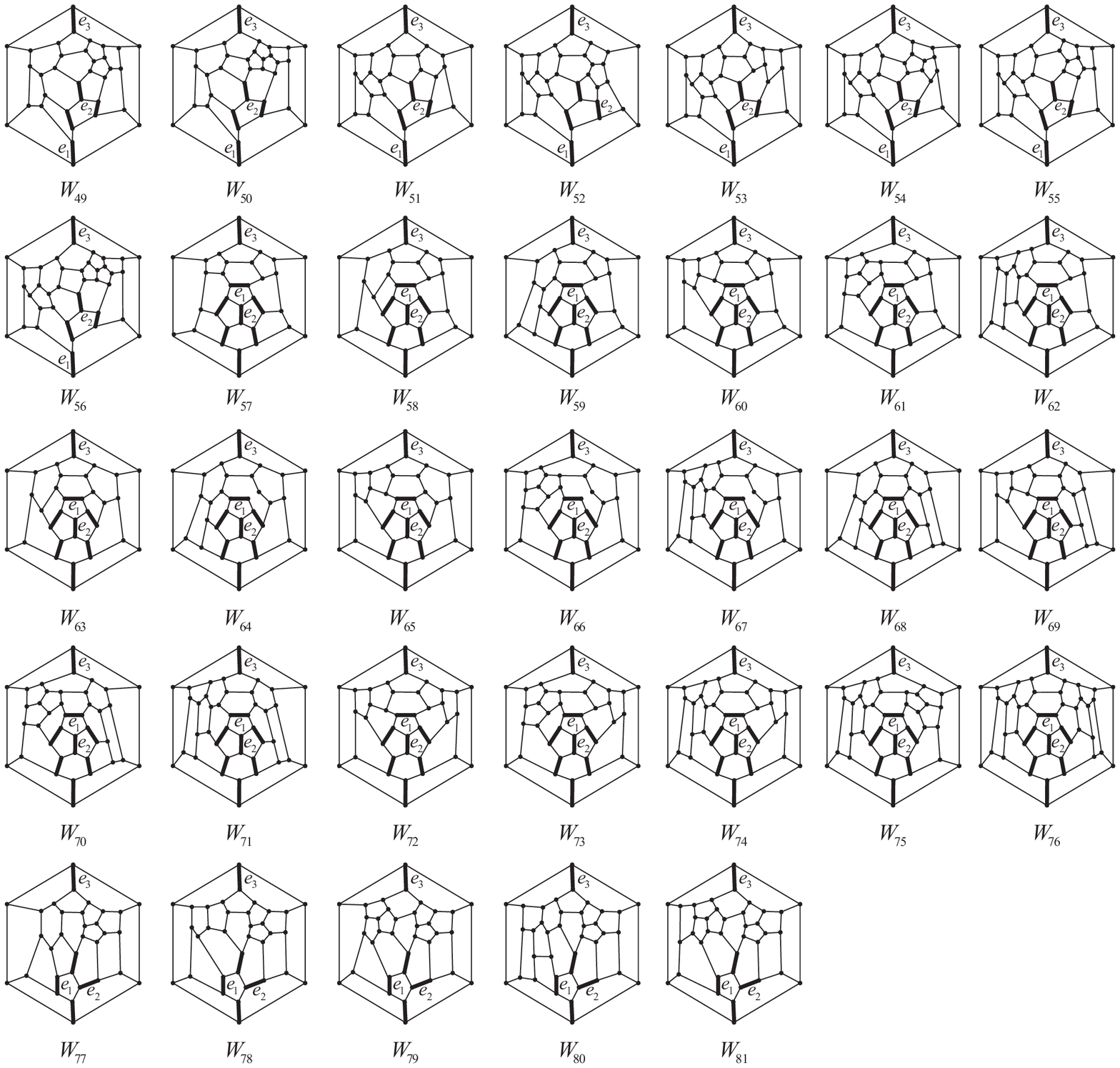}
\caption{\label{fullerene2}{\small}}
\end{figure}
are as depicted in Fig. \ref{structure4} (b) or as depicted in Fig. \ref{structure4} (c) (d).
If the structures of $U_4^3$ and $V_4^3$ are both as depicted in Fig. \ref{structure4} $(b)$, then we can check that only the distance-array $[2\wedge342243]$ of $D_{19}$ satisfies the condition of $U_4^3$ (resp. $V_4^3$). But then we will obtain a $7$-cycle, a contradiction. If the structures of $U_4^3$ and $V_4^3$ are as depicted in Fig. \ref{structure4} $(c), (d)$, then we can check that only the distance-array $[3242242]$ of $D_{13}$ satisfies the condition of $V_4^3$.
It implies that the distance-array of $U_4^3$ is $[2\wedge a_2a_32a_5a_6a_7]$, where $a_2, a_5, a_7=2$ or $3$, $a_3, a_6=3$ or $4$. We can check that the distance-arrays $[2\wedge232232]$ of $D_{05}$, $[2\wedge242233]$ of $D_{09}$, $[2\wedge332233]$ of $D_{12}$, $[2\wedge242333]$ of $D_{18}$, $[2\wedge342243]$ of $D_{19}$ satisfy the condition of $U_4^3$. So $U_4^3=D_{05}, D_{09}, D_{12}, D_{18}$ or $D_{19}$.
Hence $F\cong W_i$, $i\in\{77, \ldots, 81\}$.
If $F-L_{h2}$ has a $1$-degree vertex, then this vertex connects the two components of $L_{h2}$ or not.
For the first case, we can show that $F$ has a generalized patch $L_j$, $j\in\{70, 71, 72, 73, 74, 78\}$. For the second case, we can show that $F$ has a subgraph $L_{k1}$ (see Fig. \ref{seed-case3-3.1}), or has a generalized patch $L_i$, $i\in\{79, 80, 81, 82\}$. Note that we have considered $L_{k1}$ in the above paragraph.

Hence $F\cong W_i$, $i\in\{1, \ldots, 81\}$ (see Fig. \ref{fullerene1}, \ref{fullerene2}), or has a generalized patch $L\in \mathcal{L}$. It is easy to check that $S\subset E(L)$ forces a perfect matching of $L$ in the sense of $F$.
\end{proof}

\section{Constructing all fullerenes with $f(F)=3$}

Up to now, we know that the nanotube fullerenes of type $(4, 2)$ and fullerenes $W_1, \ldots, W_{81}$ (see Fig. \ref{fullerene1}, \ref{fullerene2}) have the minimum forcing number three.
In the sequel, we want to know all the other fullerene graphs with the minimum forcing number three.
Starting from a generalized patch $L\in\mathcal{L}$, our idea is to expand $L$ to a larger
generalized patch $L'$ such that $S=\{e_1, e_2, e_3\}$ forces a perfect matching of $L'$ in the sense of $F$.
By a serious of such expansions, we will finally obtain a fullerene with $S$ being a forcing set.
\begin{figure}[htbp!]
\centering
\includegraphics[height=5.8cm]{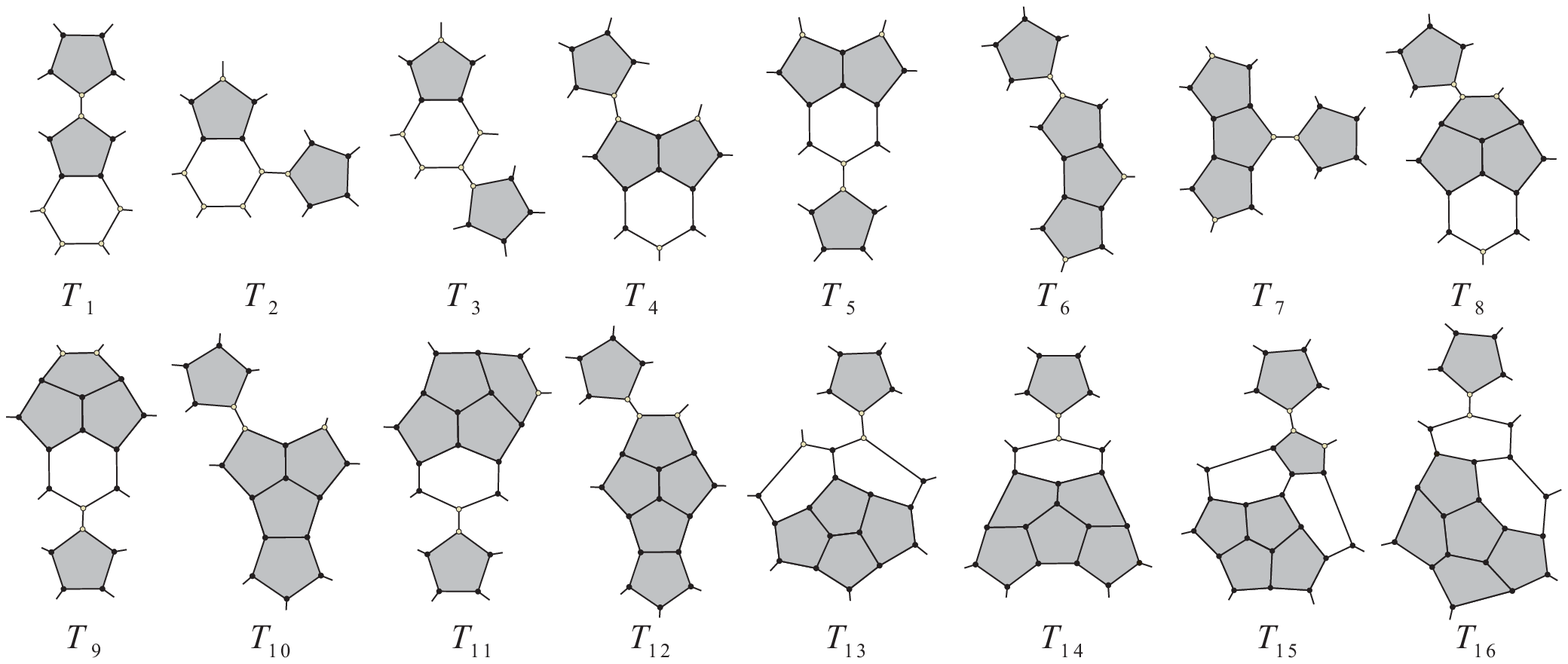}
\caption{\label{terminal10}{\small $|\triangledown_F(F'')|=10$.}}
\end{figure}

Let $L$ be a generalized patch of fullerene $F$ and $[a_1\cdots a_k]$ a distance-array of $L$.
As the operations defined in Ref. \cite{YangQ}, we define the following operations on $L$:

$(O_1)$ If $k\geq4$, $a_i\in\{4, 5\}$ and $a_{i-1}, a_{i+1}\in\{1, 2, 3, 4\}$, then let $t_i$ and $t_{i+1}$ be incident with a new vertex $u$. Add another
new vertex $v$, an edge $uv$, and attach two half edges to $v$. A distance-array for the
resulting generalized patch $L'$ would be $[a_1, \ldots, a_{i-2}, a_{i-1}+2, 1, a_{i+1}+2, a_{i+2}, \ldots, a_k]$.

$(O_2)$ If $k\geq4$, $a_i\in\{5, 6\}$ and $a_{i-1}+a_{i+1}\leq6$, then let the half edges $t_i$ and $t_{i+1}$ merge into one edge. A distance-array for the resulting generalized patch $L'$ would be $[a_1, \ldots, a_{i-2}, a_{i-1}+a_{i+1}, a_{i+2}, \ldots, a_k]$.

$(O_3)$ If $k=2$ and $a_1, a_2\in\{5, 6\}$, then we merge the half edges $t_1$ and $t_2$ into one edge.
\begin{figure}[htbp!]
\centering
\includegraphics[height=19cm]{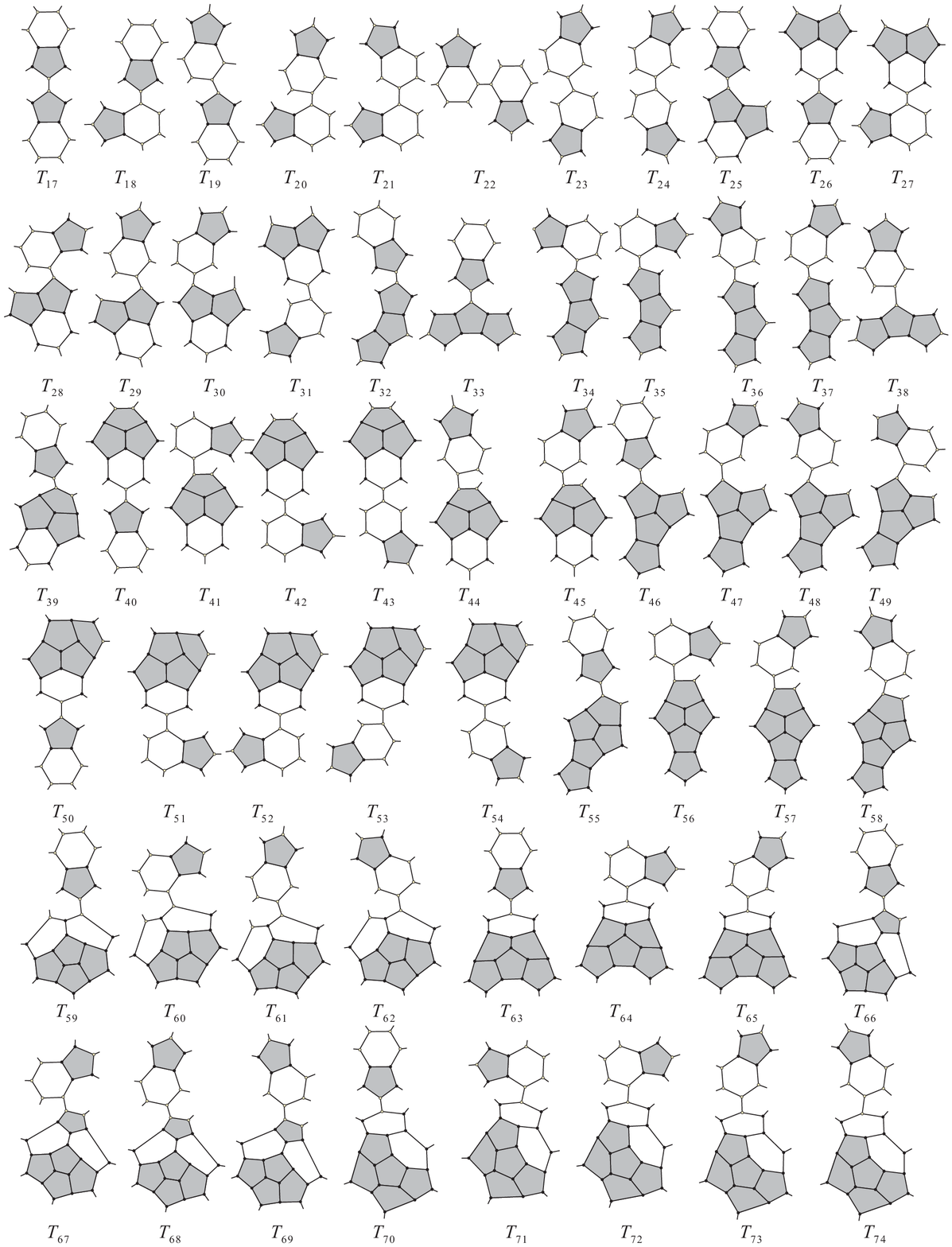}
\caption{\label{terminal12I}{\small $|\triangledown_F(F'')|=12$.}}
\end{figure}
The distance-array of the
resulting graph $L'$ is the empty distance-array $[]$.

$(O_4)$ If $k=8$, $a_i, a_{i+4}\in\{1, 2\}$ for some $i$, and all other $a_j$ are $3$ or $4$, then we connect each half-edge of $L$ to a $2$-degree vertex in $J_2$ (see Fig. \ref{sub-patches}) in the only admissible way. The distance-array of the resulting graph $L'$ is the empty distance-array $[]$.

$(O_5)$ If $k=10$, $a_i\in\{1, 2, 3, 4\}$ for $i\in\{1, 2, \ldots, 10\}$, then we check that whether the ten half edges of $L$ and the ten half edges
of some generalized patch $PP$ or $T_r$, $r\in\{1, 2, \ldots, 16\}$ (see Fig. \ref{terminal10})
can merge such that the resulting ten new faces are pentagons or hexagons,
that is, we check that whether the sum $[a_{i_1}+b_1, \ldots, a_{i_{10}+b_{10}}]$ of some distance-array $[a_{i_1}, \ldots, a_{i_{10}}]$ of $L$ and the min-distance-array $[b_1, \ldots, b_{10}]$ of $PP$ or $T_r$, $r\in\{1, 2, \ldots, 16\}$ has $a_{i_s}+b_s=5$ or $6$ for each $s=1, \ldots, 10$.
We record all efficient cases and merge the ten edges according to the various efficient cases, respectively.
Here, we may obtain different fullerenes.  However, the distance-array
corresponding to the resulting graph $L'$ is the empty distance-array $[]$.
\begin{figure}[htbp!]
\centering
\includegraphics[height=12cm]{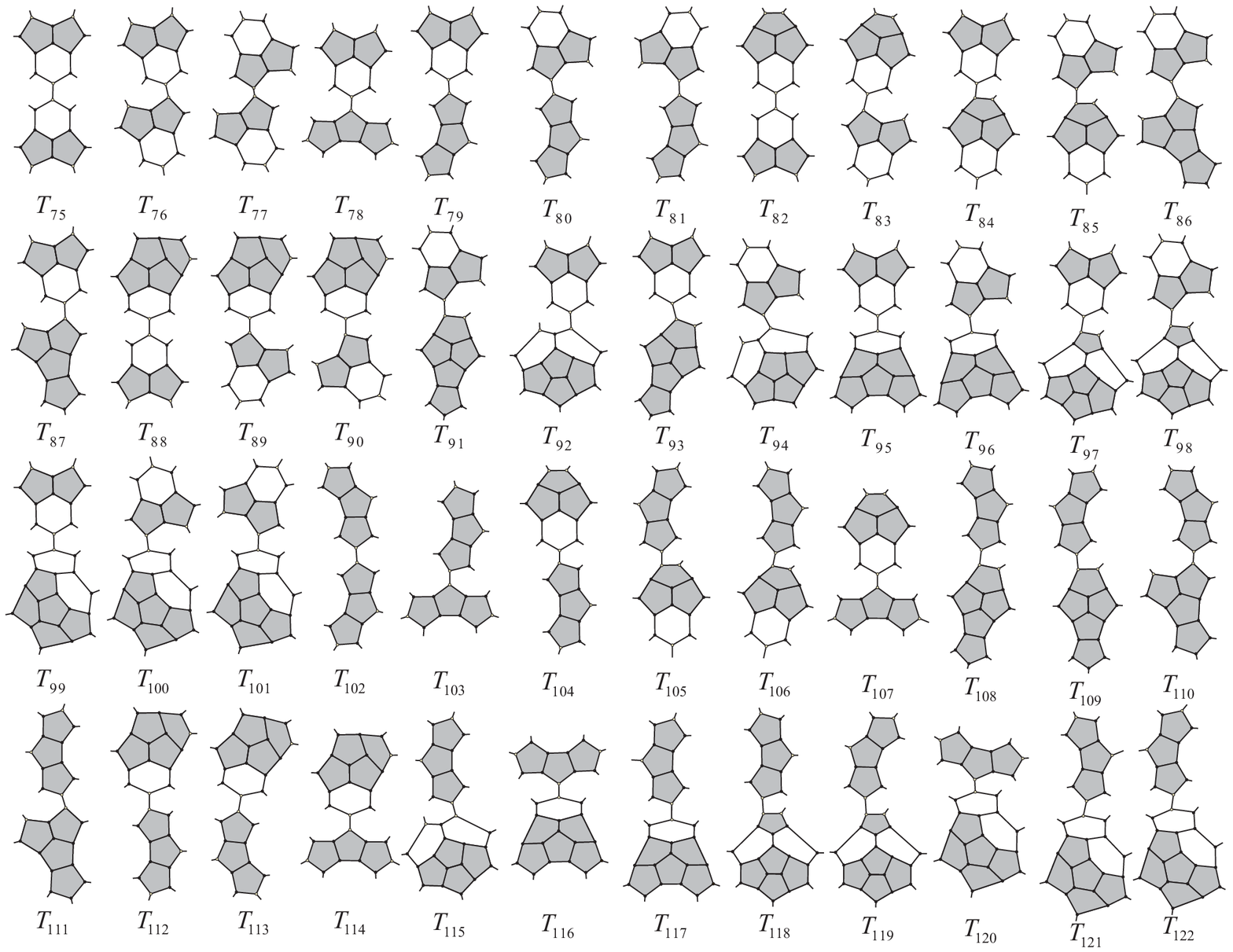}
\caption{\label{terminal12II}{\small $|\triangledown_F(F'')|=12$.}}
\end{figure}

$(O_6)$ If $k=12$, $a_i\in\{1, 2, 3, 4\}$ for $i\in\{1, 2, \ldots, 12\}$, and $a_j, a_{j+1}, a_{j+2}\in\{3, 4\}$, $a_{j-1}, a_{j+3}\in\{1, 2, 3\}$, then let the four half edges $t_j, t_{j+1}, t_{j+2}, t_{j+3}$ being incident with the four vertices $v_1, v_2, v_3,$ $v_4$ of a pentagon $P:=v_1\cdots v_5$. Add another new vertex $v$, a new edge $v_5v$, and attach two half edges to $v$.
A distance-array for the resulting generalized patch $L'$ would be $[a_1, \ldots, a_{j-2}, a_{j-1}+3, 1, a_{j+3}+3, a_{j+4}, \ldots, a_k]$.

$(O_7)$ If $k=12$, $a_i\in\{1, 2, 3, 4\}$ for $i\in\{1, 2, \ldots, 12\}$, then
we check that whether the twelve half edges of $L$ and the twelve half edges of some terminal generalized patch $T_r$, $r\in\{17, 18, \ldots, 180\}$ (see Fig. \ref{terminal12I}, \ref{terminal12II} and \ref{terminal12III})
can merge such that the resulting twelve new faces are pentagons or hexagons,
that is,
\begin{figure}[htbp!]
\centering
\includegraphics[height=21cm]{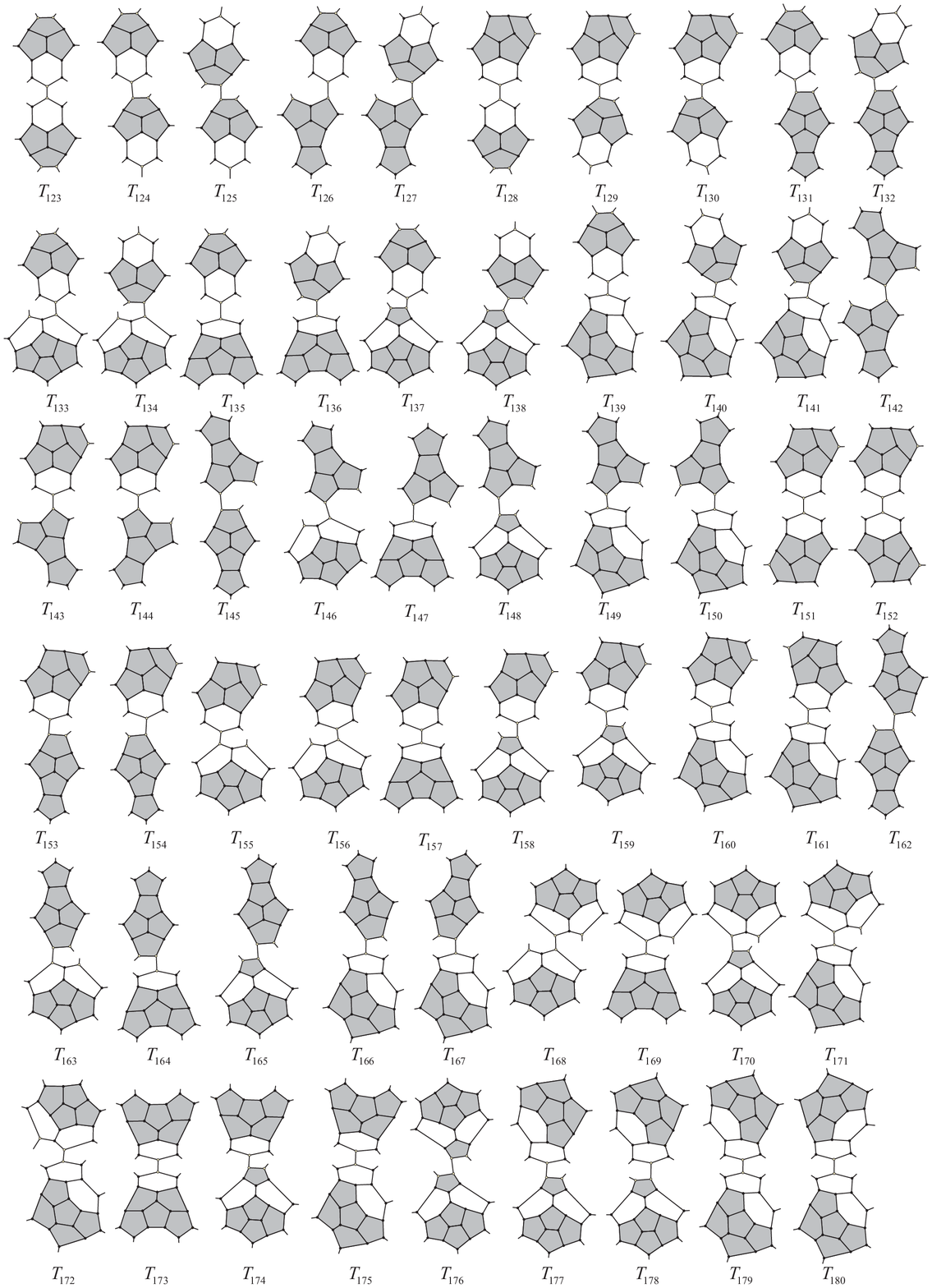}
\caption{\label{terminal12III}{\small $|\triangledown_F(F'')|=12$.}}
\end{figure}
we check that whether the sum $[a_{i_1}+b_1, \ldots, a_{i_{12}+b_{12}}]$ of some distance-array $[a_{i_1}, \ldots, a_{i_{12}}]$ of $L$ and the min-distance-array $[b_1, \ldots, b_{12}]$ of $T_r$, $r\in\{17, \ldots, 180\}$ has $a_{i_s}+b_s=5$ or $6$ for each $s=1, \ldots, 12$.
We record all efficient cases and merge the twelve edges according to the various efficient cases, respectively. Here, we may obtain different fullerenes.  However, the distance-array
of the resulting graph $L'$ is the empty distance-array $[]$.

We note that operation $O_1$ preserves the length of the distance-array. Operations $O_2$ and $O_6$ reduce the length of the distance-array by $2$. Clearly, the application of the operations $O_3, O_4, O_5$ and $O_7$ produces a fullerene graph, and no further operation can be performed.
So we call such operations the \emph{terminal operations}.
Set $F_t'$ be a subgraph obtained from some $L\in\mathcal{L}$ by performing $t\geq0$ times operation $O_1$.
Clearly, $F_t'$ is connected and $|\triangledown_F(F_t')|\leq12$. Let $F_t'':=F-F_t'$.
\begin{lem}\label{12_has_1-degree_vertex}
Suppose that $F_t''$ has a $1$-degree vertex and any $1$-degree vertex is not incident with two consecutive edges in $\triangledown_F(F_t')$ along the boundary of $F_t'$, then $|\triangledown_F(F_t')|=12$.
Moreover, $F_t''$ is two copies of $J_1$, and $\triangledown_F(F_t')$ has four consecutive edges along the boundary of $F_t'$ such that they are incident with four vertices of a pendent pentagon, respectively.
\end{lem}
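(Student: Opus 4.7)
The plan is to analyze the local face structure at a $1$-degree vertex of $F_t''$, use the non-consecutive hypothesis to propagate this structure component by component, and finally close the count by invoking the reduction of Section 3 together with Theorem \ref{Operation3}.

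First I would fix a $1$-degree vertex $v$ of $F_t''$ with $F_t'$-neighbors $u_1, u_2$ and $F_t''$-neighbor $w$, and let $f$ be the face of $F$ incident with both $vu_1$ and $vu_2$ (but not with $vw$). Since $f$ is a pentagon or hexagon of $F$, its boundary walk from $u_1$ to $u_2$ avoiding $v$ is a path with $2$ or $3$ intermediate vertices $x_1,\ldots,x_r$. An interior vertex of $F_t'$ has all three of its incident faces inside $F_t'$, so cannot sit on the boundary of a face containing $v$; hence every $x_i$ belonging to $F_t'$ must lie on the boundary of $F_t'$ and therefore contribute its own half-edge to $\triangledown_F(F_t')$. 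Combining this observation with the non-consecutive condition at $v$, and ruling out the hexagon case (which would create an additional $1$-degree vertex of $F_t''$ having consecutive half-edges and so contradict the hypothesis), forces $f$ to be the pentagon $vu_1x_1x_2u_2$ with $x_1, x_2$ on the boundary of $F_t'$.

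Next I would track where the half-edges at $x_1$ and $x_2$ land in $F_t''$. If they shared a common endpoint $y$, then $y$ would be a $1$-degree vertex whose two half-edges are consecutive along the boundary of $F_t'$, contradicting the hypothesis; so they hit distinct vertices $y_1, y_2 \in F_t''$. Planarity together with the pentagon/hexagon face restriction then forces $y_1, y_2$ to sit on a component of $F_t''$ distinct from the one containing $v$, and Proposition \ref{cyclic-6-edge-cut} identifies that component as a copy of $J_1$. Symmetrically, $v$ itself is the pendent vertex of a $J_1$-component of $F_t''$, and the pentagon of this $J_1$ contributes four consecutive half-edges to $\triangledown_F(F_t')$---precisely the ``four consecutive edges'' asserted in the statement.

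Finally, to close the count I would apply the reduction of Section 3 to $F_t''$: iteratively deleting each $1$-degree vertex together with its neighbor, we obtain a chain terminating at some $F''$ with no $1$-degree vertices. Lemma \ref{|X_i|} gives $|\triangledown_F(F'')| \leq |\triangledown_F(F_t')| \leq 12$, and Theorem \ref{Operation3} forces any nonempty $F''$ to carry extra structure incompatible with the two $J_1$'s already identified; hence $F''=\emptyset$. Adding up the two $J_1$'s (each contributing six half-edges) then yields $|\triangledown_F(F_t')|=12$. The main obstacle is the local case analysis in the first two steps: excluding the hexagon for $f$, eliminating mixed $F_t'/F_t''$ placements of the intermediate $x_i$, and preventing the two components from merging into a single larger piece of $F_t''$. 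I expect these exclusions to follow the pattern of Lemma \ref{main-lem}, using planarity plus the pentagon/hexagon restriction to knock out each unwanted branch.
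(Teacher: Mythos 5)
Your overall target is right, but two of your steps do not hold up. First, the local analysis at $v$ arrives at the wrong configuration. If the face $f$ lying between the two half-edges $vu_1, vu_2$ were a pentagon $vu_1x_1x_2u_2$ with $x_1,x_2\in F_t'$, then the arc of the boundary of $F_t'$ from $u_1$ to $u_2$ on the $f$-side would be exactly the path $u_1x_1x_2u_2$, and since $f$ is a face of $F$ no half-edge can issue into it; hence no half-edge of $\triangledown_F(F_t')$ would lie between $vu_1$ and $vu_2$ in the cyclic boundary order, i.e.\ they would be \emph{consecutive} --- precisely what the hypothesis forbids. (Your intermediate claim that every boundary vertex of $F_t'$ on $f$ ``contributes its own half-edge'' is also false: a vertex on the boundary of a generalized patch may have all three of its edges inside the patch.) The correct deduction is the opposite one: non-consecutiveness forces $f$ to contain a vertex of $F_t''$ other than $v$, which by planarity lies in a component of $F_t''$ different from the one containing $v$; this, together with the exclusion of isolated-edge components (whose endpoints would again have consecutive half-edges), is what yields that $F_t''$ is disconnected.

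Second, and more seriously, your counting is circular. You invoke Proposition \ref{cyclic-6-edge-cut} to declare components of $F_t''$ isomorphic to $J_1$ before knowing that each such component $B$ satisfies $|\triangledown_F(B)|=6$ and contains a cycle --- the hypotheses of that proposition. The paper obtains this by pure counting: $|\triangledown_F(F_t')|\le 12$; each component has an even number of leaving edges, at least $4$; the value $4$ is excluded because a $4$-edge-cut isolates an edge (Lemma \ref{3-4-edge-cut}) and an isolated-edge component is forbidden as above; hence there are exactly two components with exactly $6$ leaving edges each, which already gives $|\triangledown_F(F_t')|=12$. Only then does one rule out the tree case ($B\cong P_4$, again via consecutive half-edges) to see that $\triangledown_F(B)$ is a cyclic $6$-edge-cut and apply Proposition \ref{cyclic-6-edge-cut}. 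Your final step instead derives the count $12$ by ``adding up the two $J_1$'s,'' which presupposes the conclusion; and the appeal to the Section 3 reduction together with Theorem \ref{Operation3} to force $F''=\emptyset$ is a misreading --- that theorem describes the structure of a nonempty terminal $F''$, it cannot force emptiness, and that machinery is not what is needed here.
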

\begin{proof}
We note that $F_t'$ is connected, $|\triangledown_F(F_t')|$ is an even number and $|\triangledown_F(F_t')|\leq12$.
Since $F_t''$ has not a $1$-degree vertex such that it is incident with two consecutive edges in $\triangledown_F(F_t')$ along the boundary of $F_t'$, $F_t''$ is not connected and has not an isolated edge.
By Lemma \ref{3-4-edge-cut}, $F_t''$ has exactly two connected components (say $B_1$ and $B_2$) and $|\triangledown_F(B_i)|=6$, $i=1, 2$ since $F_t''$ has a perfect matching and $|\triangledown_F(F_t')|\leq12$. Hence $|\triangledown_F(F_t')|=12$.
We claim that $B_i$ contains a cycle, $i=1, 2$. If not, $B_i$ is a path with $4$ vertices, then $F_t''$ has a $1$-degree vertex that is incident with two consecutive edges in $\triangledown_F(F_t')$ along the boundary of $F_t'$, a contradiction. So $\triangledown_F(B_i)$ is a cyclic $6$-edge-cut. Since $B_i$ has a unique perfect matching, by Proposition \ref{cyclic-6-edge-cut}, $B_i\cong J_1$, $i=1, 2$. This lemma holds since $F$ is a plane graph.
\end{proof}

We denote the min-distance-array of each $L_i\in\mathcal{L}$ by $L_i$, $i=1, \ldots, 95$. Then we have\\
$L_1=[12223125], L_2=[12232134], L_3=[12241224], L_4=[12331233], \ldots$,
$L_{95}=[12433335]$,
Starting from these $95$ initial min-distance-arrays, we describe the following procedure to generate a directed graph $D$ which is called the distance-array digraph.
\begin{alg}\label{alg}\emph{(}Generating the Distance-Array Digraph $D$\emph{)}
\begin{description}
\item[$(S1)$] Set $V=\{L_1, L_2, \ldots, L_{95}\}$ and $A=\emptyset$.
\item[$(S2)$] Select $L\in V$ on which the operations in $(S3)$ have not been made.
\item[$(S3)$] Implying all possible operations from $O_1$ to $O_7$ for the distance-array $L$, we obtain a set $R'$ of some distance-arrays. Replacing each distance-array in $R'$ with its min-distance-array, we obtain a set $R$.
    Set $V:=V\cup R$ and $A:=A\cup\{(L, L')| L'\in R\}$, where $(L, L')$ is an arc of $D$.
    Particularly, if $[]\in R$ and there are $t\geq1$ different ways to obtain $[]$ from $L$ \emph{(}note that we may have many generalized patches $T_i$ such that the edges in $\triangledown_F(L)$ and $\triangledown_F(T_i)$ can merge suitably, and for a generalized patch $T$, the edges in $\triangledown_F(L)$ and $\triangledown_F(T)$ may have different ways to merge suitably\emph{)}, then we use $t$ multiple arcs $(L, [])$ to represent those various ways to merge.
\item[$(S4)$] If all min-distance-arrays in $V$ have been selected to make operations in $(S3)$, then go to $(S5)$.
Otherwise, go to $(S2)$.
\item[$(S5)$] If $[]\notin V$, then $D:=\emptyset$. Otherwise, for every $L\in V$, delete $L$ if there is not directed path from $L$ to the empty distance-array $[]$. Then we obtain the final directed graph $D$ with vertex-set $V$ and arc-set $A$.
\end{description}
\end{alg}
\begin{figure}[htbp!]
\centering
\includegraphics[height=14cm]{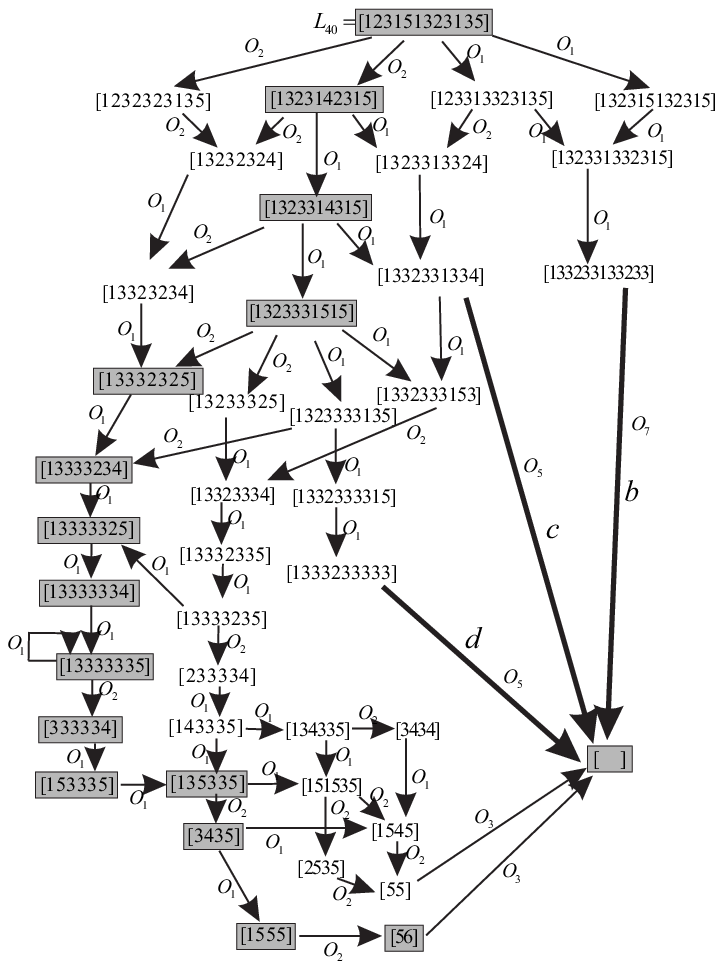}
\caption{\label{L40}{\small A subgraph $D'$ of $D$, the three dark arcs $b, c, d$ represent $11, 10, 13$ arcs, respectively.}}
\end{figure}
Using MATLAB we obtain that $D$ has $7802$ vertices and $28379$ arcs, and $\mathcal{L}^*\subset V(D)$, where $\mathcal{L}^*:=\mathcal{L}\setminus\{L_1, L_2, L_4, L_{14}, L_{44}, L_{45}, L_{70}, L_{75}\}$. In the sequel, we call $L_i\in\mathcal{L}^*$ the \emph{valid initial generalized patch} of $F$.
Starting from each valid initial generalized patch we can construct a fullerene with a forcing set of size $3$ by implying the seven operations $O_1$ to $O_7$. We note that $D$ has exactly four distance-arrays $[135151351515]$, $[1335135135]$, $[1333333335]$ and $[13333335]$, each of which is incident with a loop.

Since $D$ is too big, we take a subgraph of $D$ to demonstrate how to generate $D$. In the first step $(S1)$, we let $V=\{L_{40}\}$. By Algorithm \ref{alg}, we finally obtain the directed graph $D'$ (see Fig. \ref{L40}).
Let $F'$ be a generalized patch of $F$. If $F'$ has distance-array $[133233133233]$, then we can check that the twelve edges in $\triangledown_F(F')$ only can merge suitably with the twelve edges in $\triangledown_F(T_i)$, $i\in\{22, 27, 60, 67, 75, 78, 92, 97, 168, 170, 176\}$, and $\triangledown_F(F')$ and $\triangledown_F(T_i)$ has only one way to merge suitably. So the dark edge $b$ in Fig. \ref{L40} depicts eleven arcs from $[133233133233]$ to $[]$.
Similarly, if $F'$ has distance-array $[1332331334]$, then the ten edges in $\triangledown_F(F')$ only can merge suitably with the ten edges in $\triangledown_F(T_i)$, $i\in\{2, 5, 7, 13, 15\}$, and for each such $T_i$, $\triangledown_F(T_i)$ and $\triangledown_F(F')$ have two ways to merge suitably. So the dark edge $c$ in Fig. \ref{L40} depicts ten arcs from $[1332331334]$ to $[]$. For $F'$ with distance-array $[1333233333]$, the ten edges in $\triangledown_F(F')$ only can merge suitably with the ten edges in $\triangledown_F(T_i)$, $i\in\{1, 2, 3, 4, 5, 8, 9, 13, 15\}$. Checking each such $T_i$, we know that the dark edge $d$ in Fig. \ref{L40} depicts $13$ arcs from $[1333233333]$ to $[]$.

For example, along the directed walk $[123151323135]\rightarrow[1323142315]\rightarrow[1323314315]$\\
$\rightarrow[1323331515]\rightarrow[13332325]\rightarrow[13333234]\rightarrow[13333325]\rightarrow[13333334]\rightarrow[13333335]\rightarrow\cdots\rightarrow[13333335]\rightarrow[333334]\rightarrow[153335]\rightarrow[135335]\rightarrow[3435]\rightarrow[1555]\rightarrow[56]\rightarrow[]$
in $D'$ (there are $7k$ (integer $k\geq1$) times $O_1$ operations from $[13333335]$ to $[13333335]$), we can construct a nanotube fullerene with $k$-layers hexagons of type $(6, 1)$ (see Fig. \ref{nanotube(6,1)} (a)) and capped on the two ends by the two caps $A$ and $B$ (see Fig. \ref{nanotube(6,1)} (b), (c)), respectively. We can check that $\{e_1, e_2, e_3\}$ (see Fig. \ref{nanotube(6,1)} (b)) is a forcing set of this nanotube fullerene.
\begin{figure}[htbp!]
\centering
\includegraphics[height=3.3cm]{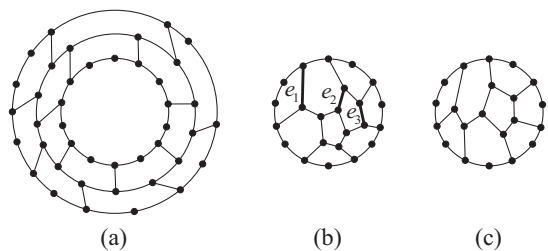}
\caption{\label{nanotube(6,1)}{\small (a) $2$-layers hexagons of type $(6, 1)$, (b) cap $A$, (c) cap $B$.}}
\end{figure}

We note that although $F_{24}$ (see Fig. \ref{F_24}) has the minimum forcing number $2$, it also has a minimal forcing set of size $3$, where minimal means that its any proper subset is not a forcing set any more. So $F_{24}$ can also be generated from a forcing set of size $3$.

In the following, we will prove that $D$ represents all possible ways to
obtain fullerene graphs from the initial generalized patches in $\mathcal{L}$ by implying the seven operations $O_1$ to $O_7$ (in any order).
\begin{thm}\label{main}
A fullerene graph has the minimum forcing number $3$ if and only if it is isomorphic to some $W_i$, $i\in\{1, \ldots, 81\}$ \emph{(}see Fig. \emph{\ref{fullerene1}, \ref{fullerene2})}, or its order is not $24$ and can be constructed from some valid initial generalized patches by implementing operations $O_1$ to $O_7$ along a directed walk in $D$ from its min-distance-array to $[]$.
\end{thm}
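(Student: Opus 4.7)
The plan is to split the equivalence into the two standard directions and prove each separately.

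For the sufficiency (``if'') direction, the argument is relatively routine and reduces to two checks. Each $W_i$ ($i\in\{1,\ldots,81\}$) in Figs.~\ref{fullerene1}--\ref{fullerene2} comes equipped with an explicit three-edge set $S=\{e_1,e_2,e_3\}$ that, by the construction in Theorem~\ref{initial_seed}, forces the unique perfect matching $M\supseteq S$; therefore $f(W_i)\le 3$. A direct check on vertex counts and local structure confirms that none of the $W_i$ is isomorphic to $F_{24}$, so Theorem~1.1 forces $f(W_i)=3$. For a fullerene $F$ of order different from $24$ produced by a directed walk in $D$ from the min-distance-array of some valid initial patch $L\in\mathcal{L}^*$ to $[\,]$, the initial patch $L$ carries a three-edge set $S\subseteq E(L)$ forcing $M\cap E(L)$ in the sense of $F$; one then verifies by inspection that each of the operations $O_1$--$O_7$ preserves the property that $S$ forces $M\cap E(L')$ in the sense of $F$ (essentially because each newly created face is a pentagon or hexagon and each newly adjoined edge is pinned by the already-forced matching edges). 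When the walk reaches $[\,]$ we have $L'=F$, so $f(F)\le 3$, and Theorem~1.1 upgrades this to $f(F)=3$.

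For the necessity (``only if'') direction, start from a fullerene $F$ with $f(F)=3$ and a minimum forcing set $S=\{e_1,e_2,e_3\}$. Apply Theorem~\ref{initial_seed}: either $F\cong W_i$ for some $i\in\{1,\ldots,81\}$ and we are done, or $F$ contains a generalized patch $L\in\mathcal{L}$ with $S\subseteq E(L)$ forcing $M\cap E(L)$ in the sense of $F$. In the latter case we proceed by induction on $|V(F)|-|V(L')|$, where $L'$ is the current generalized patch of $F$ (initially $L'=L$). If $L'=F$ we stop; otherwise the complement $F_t'':=F-L'$ is nonempty, and we need to exhibit one of $O_1,\ldots,O_7$ that enlarges $L'$ to a generalized patch $L''$ of $F$ still carrying the forced matching. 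The choice is dictated by the local structure on the boundary of $L'$: when $F_t''$ has a $1$-degree vertex incident with two consecutive edges of $\triangledown_F(L')$, we apply $O_1$ or $O_6$ according to whether the shared face is a pentagon adjacent to a small boundary gap or a pentagon created by four consecutive boundary edges; when two boundary half-edges can legitimately merge, $O_2$ or $O_3$ applies; and when none of these is available we are in the terminal case and one of $O_4$, $O_5$, $O_7$ glues on the appropriate $J_2$, $PP$, some $T_r$ with $r\le 16$, or some $T_r$ with $17\le r\le 180$. The combinatorial justification relies on Theorem~\ref{Operation3} and Corollary~\ref{terminal-gene-patch} (which classify the possible $F''$), on Lemma~\ref{main-lem} (which controls how isolated edges of $F-L'$ meet the rest), and on Lemma~\ref{12_has_1-degree_vertex} (which handles the $|\triangledown_F(L')|=12$ case when $F_t''$ has a $1$-degree vertex). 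Since $|V(F)|\ne 24$ by Theorem~1.1, the constructed walk lies in $D$.

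The main obstacle is the exhaustiveness of $O_1$--$O_7$ in the inductive step: one must prove that at every stage the complement $F-L'$ is forced into exactly one of the seven configurations encoded by the operations, with no ``exotic'' gluings left over. The proof is a finite case analysis on the distance-array of $L'$, exploiting the fact that each entry lies in $\{1,\ldots,6\}$ and each face of $F$ is a pentagon or a hexagon, so the local options at each consecutive pair of boundary half-edges are strictly limited; Theorem~\ref{Operation3} and Lemma~\ref{main-lem} then rule out every configuration not captured by some $O_j$, while the exclusion of nanotube fullerenes of type $(4,2)$ (already treated separately via Lemma~\ref{(4,2)-nanotube}) avoids the degenerate periodic case. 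Termination of the walk is automatic, since $O_2$ and $O_6$ strictly shorten the distance-array, $O_1$ cannot be iterated indefinitely without forcing a terminal merge (because the generalized patch $L'$ lies inside the finite plane graph $F$), and $O_3,O_4,O_5,O_7$ immediately yield the empty distance-array $[\,]$ and thereby the fullerene $F$ itself.
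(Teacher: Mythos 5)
Your proposal follows essentially the same route as the paper: Theorem~\ref{initial_seed} supplies the initial patch $L\in\mathcal{L}$ (or one of the $W_i$), Theorem~\ref{Operation3}, Corollary~\ref{terminal-gene-patch}, Lemma~\ref{main-lem} and Lemma~\ref{12_has_1-degree_vertex} show that at every stage one of $O_1$--$O_7$ applies, finiteness gives termination, and the converse follows because the operations preserve the forced matching and the order-$24$ exclusion rules out $F_{24}$. One small logical point to tighten: in the necessity direction you invoke Theorem~\ref{initial_seed} at the outset, but that theorem hypothesizes that $F$ is \emph{not} a nanotube of type $(4,2)$, so this case must be disposed of first --- the paper does so by combining Lemma~\ref{(4,2)-nanotube} (which yields a forcing set $S$ with $F[V(S)]\cong L_3$) with an explicit check that such a nanotube is constructible from $L_3$ by operations $O_1$, $O_2$, $O_3$; your remark that the case is ``treated separately'' gestures at the first half but not at the constructibility claim, which is what the theorem actually requires.
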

\begin{proof}
By Lemma \ref{(4,2)-nanotube}, a nanotube fullerene of type $(4, 2)$ has a forcing set $S$ of size $3$ such that $F[V(S)]\cong L_3$ (see Fig. \ref{seed-case1}). We note that $L_3\in D$. Further, we can check that a nanotube fullerene of type $(4, 2)$ can be constructed from $L_3$ by applying several times operations $O_1, O_2$ and $O_3$.

Next, we suppose that $F$ is a fullerene with the minimum forcing number $3$ and is not a nanotube fullerene of type $(4, 2)$. Then $F$ has a forcing set $S:=\{e_1, e_2, e_3\}$.
By Theorem \ref{initial_seed} and the above discussions, $F$ is isomorphic to some $W_i$, $i\in\{1, \ldots, 81\}$ or
$F$ has a generalized patch $L\in\mathcal{L}$ such that $S\subseteq E(L)$ forces a perfect matching of $L$ in the sense of $F$.
Since $S$ is a forcing set of $F$, $F-L$ has a unique perfect matching. Clearly, $|\triangledown_F(L)|\leq12$.
If $F-L=\emptyset$, then $L$ is not an induced subgraph of $F$, that is, there exits $u, v$ on the boundary of $L$ such that $uv\in E(F)\setminus E(L)$. We can show that operation $O_2$ or $O_3$ can be done.
Next, we suppose that $F-L\neq\emptyset$. It implies that $|\triangledown_F(L)|\geq4$.

\emph{Case 1.} $L$ is not an induced subgraph of $F$.

Then there exits two vertices on the boundary of $L$ such that they are adjacent in $F$ and are not adjacent in $L$.
So $|\triangledown_F(L)|\leq10$.
If there are two such vertices $u, v\in V(L)$ on the boundary of $L$ such that the half edge incident with $u$ and the half edge incident with $v$ are consecutive along the boundary of $L$, then operation $O_2$ can be done. Otherwise, $F-L$ is not connected. Since $F-L$ has a perfect matching and $|\triangledown_F(L)|\leq10$, $F-L$ has exactly two components, denoted by $H_1$ and $H_2$.
Clearly, $|\triangledown_F(H_i)|$ is even and $|\triangledown_F(H_i)|\geq4$.
Since $|\triangledown_F(H_1)|+|\triangledown_F(H_2)|=|\triangledown_F(L)|\leq10$, there is some $i\in\{1, 2\}$ with $|\triangledown_F(H_i)|=4$. Without loss of generality, we suppose that $|\triangledown_F(H_1)|=4$. Hence $H_1$ is an edge by Lemma \ref{3-4-edge-cut}, that is, $F-L$ has a $1$-degree vertex that is incident with two consecutive edges in $\triangledown_F(L)$ along the boundary of $L$. So operation $O_1$ can be done.

\emph{Case 2.} $L$ is an induced subgraph of $F$.

We consider the following two cases.

\emph{Subcase 2.1.} $F-L$ has a $1$-degree vertex.

If $F-L$ has a $1$-degree vertex that is incident with two consecutive edges in $\triangledown_F(L)$ along the boundary of $L$, then operation $O_1$ can be done.
If $F-L$ has a $1$-degree vertex and any such vertex is not incident with two consecutive edges in $\triangledown_F(L)$ along the boundary of $L$, then $|\triangledown_F(L)|=12$ and $F-L\cong 2J_1$ by Lemma \ref{12_has_1-degree_vertex}. So operation $O_6$ may be done.

\emph{Subcase 2.2.} $F-L$ has no $1$-degree vertices.

By Lemma \ref{3-4-edge-cut} and Proposition \ref{cyclic-6-edge-cut}, $F-L$ is connected.
Moreover, by Theorem \ref{Operation3} and Corollary \ref{terminal-gene-patch},
$F-L$ is isomorphic to $J_2$ if $|\triangledown_F(L)|=8$, and is isomorphic to $PP$ (see Fig. \ref{sub-patches}) or some $T_i$, $i\in\{1, \ldots, 16\}$ (see Fig. \ref{terminal10}) if $|\triangledown_F(L)|=10$, and is isomorphic to some $T_i$, $i\in\{17, \ldots, 180\}$ (see Fig. \ref{terminal12I}, \ref{terminal12II} and \ref{terminal12III}) if $F-L$ has not pendent pentagons and $|\triangledown_F(L)|=12$. So operations $O_4, O_5, O_7$ can be done.
If $F-L$ has a pendent pentagons and $|\triangledown_F(L)|=12$, then operation $O_6$ can be done.

Let $L^t$ ($t\geq1$) be the subgraph of $F$ obtained from $L$ by implementing $t$ times operations $O_1$, $O_2$, $\ldots$, or $O_7$.
If the distance-array of $L^t$ is $[]$, then we are done.
Next, we suppose that the distance-array of $L^t$ is not $[]$. It means that $L^t$ is obtained from $L$ by implementing $t$ times operations $O_1, O_2$, or $O_6$.
By the definition of the operations $O_1, O_2$, and $O_6$, $F-L^t$ has a unique perfect matching and $|\triangledown_F(L^t)|\leq12$.
We note that if operation $O_6$ is implemented at least once in the process of obtaining $L^t$ from $L$,
then $|\triangledown_F(L^t)|\leq|\triangledown_F(L)|-2$, and
the conclusions of Theorem \ref{Operation3} and Corollary \ref{terminal-gene-patch} also hold for $|\triangledown_F(L^t)|=8$ or $10$ with $F-L^t$ having no $1$-degree vertices.
As the discussion of $L$,
we can show that at least one of the operations $O_1$, $O_2$, $\ldots$,  $O_7$ can be done for $L^t$.

Since $F$ is finite, there is an integer $t_0\geq1$ such that the distance-array of $L^{t_0}$ is $[]$.
Hence $L\in D$ and $F$ can be constructed from valid initial generalized patch $L$ by implementing operations $O_1$ to $O_7$ along a directed walk in $D$ from $L$ to $[]$.

Conversely, each fullerene $W_i$, $i\in\{1, \ldots, 81\}$ has the minimum forcing number three. It is sufficient to consider a fullerene $F$ constructed from some valid initial generalized patch $L_i$ by implementing operations $O_1$ to $O_7$ along a directed walk in $D$ from $L_i$ to $[]$. Clearly, $L_i$ has an edge set $S:=\{e_1, e_2, e_3\}$ such that $S$ forces a perfect matching of $L_i$ in the sense of $F$. By the description of the operations $O_1$ to $O_7$, $S$ is a forcing set of $F$. Since the order of $F$ is not $24$, $F\ncong F_{24}$. So $f(F)=3$.
\end{proof}

We notice that there is only one fullerene $F_{24}$ (see Fig. \ref{F_24}) of order $24$.
\begin{cor}\label{anti-forcing3}
Except for fullerene $F_{24}$, each fullerene with anti-forcing number $4$ has the minimum forcing number $3$.
\end{cor}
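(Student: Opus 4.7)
The plan is to cross-reference two classifications of fullerenes: Theorem \ref{main}, which characterizes all fullerenes with $f(F) = 3$, and the construction by Yang et al.\ \cite{YangQ} of all fullerenes with $af(F) = 4$. The corollary amounts to saying that every output of Yang et al.'s procedure, except $F_{24}$, is also an output of the procedure developed in Section 4 (equivalently, admits a forcing set of size three). Note that $F_{24}$ is known to have $af = 4$ but $f = 2$, so it genuinely must be excluded from the statement; this is the only exception, since for every other fullerene Theorem 1.1 guarantees $f(F) \geq 3$.

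For the upper bound $f(F) \leq 3$, I would split into two cases according to Yang et al.'s structural description. The infinite family of nanotube fullerenes of type $(4, 2)$ is already handled by Lemma \ref{(4,2)-nanotube}, which exhibits an explicit forcing set of size three sitting on any cap $A_i$ from Fig.\ \ref{nine-caps}. For the remaining fullerenes with $af = 4$, I would verify case by case that each such $F$ either coincides with one of $W_1,\ldots,W_{81}$ or is generated along some directed walk in the distance-array digraph $D$ from a valid initial generalized patch in $\mathcal{L}^*$; by Theorem \ref{main} this gives $f(F) = 3$.

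The main obstacle is carrying out this cross-check between Yang et al.'s list and our procedure in a uniform way, since the two constructions start from different primitives (caps and patches with small anti-forcing profiles on one side, the generalized patches $L_i \in \mathcal{L}^*$ on the other). A cleaner shortcut, which I would use wherever possible to avoid a lengthy case analysis, is to read a forcing set of size three directly off Yang et al.'s construction: since an anti-forcing set of size four concentrates the non-uniqueness of the perfect matching on a small local configuration, one can typically select three matching edges adjacent to that configuration which already force the whole matching. Once each fullerene in Yang et al.'s list (other than $F_{24}$) has been shown to admit such a size-three forcing set, combining with $f(F) \geq 3$ from Theorem 1.1 closes the argument.
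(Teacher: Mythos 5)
Your high-level plan (cross-reference the classification of fullerenes with $af=4$ against Theorem \ref{main}) is the right one, and your treatment of the lower bound and of the exceptional role of $F_{24}$ is fine. But the proposal leaves its own ``main obstacle'' unresolved, and that obstacle is exactly where the proof lives. The missing idea is the precise form of Theorem 4.3 in \cite{YangQ}: every fullerene with anti-forcing number $4$ is constructed from one of the generalized patches $L_5$, $L_{26}$, $L_{27}$ by applying the operations $O_1$ to $O_4$ --- that is, from initial patches that belong to $\mathcal{L}^*$ and via a subset of the very operations $O_1$--$O_7$ used in Theorem \ref{main}. With this observation the two constructions are not ``different primitives'' needing a uniform dictionary; one is literally contained in the other, and the corollary follows in one line. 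Without it, your case-by-case verification is not a finite or well-defined procedure: the fullerenes with $af=4$ form an infinite family (the type-$(4,2)$ nanotubes alone are infinitely many, and by Theorem 4.4 of \cite{YangQ} such fullerenes exist in every admissible order), so you cannot enumerate them against $W_1,\ldots,W_{81}$ and the digraph $D$ one at a time.

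The proposed shortcut is also not a valid substitute. There is no general principle that an anti-forcing set of size four ``concentrates'' the freedom of the perfect matching so that three nearby matching edges force it; the relationship between $af$ and $f$ is not that tight, and $F_{24}$ itself (with $af=4$ and $f=2$) shows the local picture can behave unexpectedly. The word ``typically'' signals that this step is a heuristic, not an argument. To repair the proof, replace both the case analysis and the shortcut by the single citation of Yang et al.'s constructive theorem and then invoke Theorem \ref{main} (noting that $|V(F)|\neq 24$ since $F\ncong F_{24}$ and $F_{24}$ is the unique fullerene on $24$ vertices).
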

\begin{proof}
Let $F$ be a fullerene with anti-forcing number $4$ and $F\ncong F_{24}$. Then the order of $F$ is not $24$.
From the Theorem $4.3$ in Ref. \cite{YangQ}, $F$ can be constructed from $L_5, L_{26}$, or $L_{27}$ by implying operations $O_1$ to $O_4$.
So $F$ has the minimum forcing number $3$ by Theorem \ref{main}.
\end{proof}
We note that a nanotube fullerene of type $(4, 2)$ has anti-forcing number $4$ and minimum forcing number $3$.
By Theorem $4.4$ in Ref \cite{YangQ} and Corollary \ref{anti-forcing3}, the following corollary holds.
\begin{cor}
For any even $n\geq20$ $(n\neq22, 24)$, there is a fullerene $F$ of order $n$ such that $F$ has the minimum forcing number $3$.
\end{cor}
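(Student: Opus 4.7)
The plan is to chain together the two cited results directly. Theorem $4.4$ in \cite{YangQ} asserts that for every even integer $n\geq 20$ with $n\neq 22, 24$, there exists a fullerene graph of order $n$ whose anti-forcing number equals $4$; Corollary \ref{anti-forcing3} then upgrades any such existence statement by guaranteeing that every fullerene with anti-forcing number $4$, other than the exceptional $F_{24}$, has minimum forcing number exactly $3$. Composing the two yields the claim.

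Concretely, I would fix an arbitrary even $n\geq 20$ with $n\neq 22, 24$ and invoke Theorem $4.4$ of \cite{YangQ} to produce a fullerene $F$ of order $n$ with $af(F)=4$. The single point needing explicit verification is that $F\ncong F_{24}$, but this is immediate from comparing orders: $|V(F)| = n\neq 24 = |V(F_{24})|$. Then Corollary \ref{anti-forcing3} applies and delivers $f(F)=3$, finishing the argument.

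The main ``obstacle'' is bookkeeping rather than substantive mathematics: one simply has to note that the excluded orders in the target statement match the exclusions forced by independent constraints, namely that no fullerene of order $22$ exists (by the Gr\"{u}nbaum--Motzkin theorem mentioned in the Introduction) and that the only fullerene of order $24$ is $F_{24}$, which has $f(F_{24})=2$ by the opening discussion of the paper. Hence the list of permitted $n$ in the corollary is exactly the list for which an anti-forcing-$4$ fullerene distinct from $F_{24}$ exists, and no further case analysis is required.
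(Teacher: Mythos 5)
Your proposal is correct and follows exactly the paper's own (one-line) derivation: the corollary is obtained by composing Theorem 4.4 of \cite{YangQ} with Corollary \ref{anti-forcing3}, and your observation that $n\neq 24$ immediately rules out $F\cong F_{24}$ is the only verification needed. The extra bookkeeping remarks about why $22$ and $24$ are excluded are consistent with the paper's discussion and add nothing problematic.
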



\end{document}